\font\ttten=cmtt10
\newdimen\outputBaseLineSkip
\newskip\beginOutputSkip
\newskip\endOutputSkip
\def\looserOutput#1{%
  \advance\beginOutputSkip by #1
  \advance\endOutputSkip by #1
}
\def\tighterOutput#1{%
  \advance\beginOutputSkip by -#1
  \advance\endOutputSkip by -#1
}
\def\beginOutput{%
    \par
    \penalty -150
    \penalty -150
    \begingroup
      \def\\{%
          \leavevmode
          \hss
          \endgraf
          \penalty 150
          }
      \ttten
      \parindent = 24pt
      \def\${\char`\$}
      \def\{{\char`\{}
      \def\}{\char`\}}
      \catcode`\_=\the\catcode`a
      \catcode`\^=\the\catcode`a
      \catcode`\#=\the\catcode`a
      \catcode`\~=\the\catcode`a
      \catcode`\&=\the\catcode`a
      \parskip=0pt
      \lineskip=0pt
      \obeyspaces
      }
\def\emptyLine{%
    \penalty -100
    \penalty -100
    }
\def\endOutput{%
    \endgroup
    \par
    \penalty -150
    \penalty -150
    \noindent}
\numberwithin{equation}{section}
\def\cocoa{{\hbox{\rm C\kern-.13em o\kern-.07em C\kern-.13em o\kern-.15em A}}}
\newtheorem{theorem}{Theorem}[section]
\newtheorem{question}[theorem]{Question}
\newtheorem{proposition}[theorem]{Proposition}
\newtheorem{corollary}[theorem]{Corollary}
\theoremstyle{definition}
\newtheorem{remark}[theorem]{Remark}
\newtheorem{definition}[theorem]{Definition}
\newtheorem{example}[theorem]{Example}
\newcommand {\PGL}{\mathrm{PGL}}
\newcommand {\pf}{\mathrm{pf}}
\newcommand {\coker}{\mathrm{coker}}
\newcommand {\depth}{\mathrm{depth}}
\newcommand {\reg}{\mathrm{reg}}
\newcommand {\sHom}{\mathcal{H}\kern -0.25ex{\mathit om}}
\newcommand {\sExt}{\mathcal{E}\kern -0.25ex{\mathit xt}}
\newcommand {\sTor}{\mathcal{T}\kern -0.25ex{\mathit or}}
\newcommand {\im}{\mathrm{im}}
\newcommand {\rk}{\mathrm{rk}}
\newcommand {\pd}{\mathrm{pd}}
\newcommand {\Ext}{\mathrm{Ext}}
\newcommand {\Hom}{\mathrm{Hom}}
\newcommand {\Aut}{\mathrm{Aut}}
\newcommand {\Hilb}{\mathcal{H}\kern -0.25ex{\mathit ilb\/}}
\newcommand {\quantum}{k}
\newcommand {\field}{\mathbf k}
\newcommand {\cA}{\mathcal{A}}
\newcommand {\cB}{\mathcal{B}}
\newcommand {\cU}{\mathcal{U}}
\newcommand {\cV}{\mathcal{V}}
\newcommand{\cC}{{\mathcal C}}
\newcommand{\cE}{{\mathcal E}}
\newcommand{\cF}{{\mathcal F}}
\newcommand{\cM}{{\mathcal M}}
\newcommand{\cN}{{\mathcal N}}
\newcommand{\cO}{{\mathcal O}}
\newcommand{\cG}{{\mathcal G}}
\newcommand{\cI}{{\mathcal I}}
\newcommand {\bZ}{\mathbb{Z}}
\newcommand {\bC}{\mathbb{C}}
\newcommand {\bP}{\mathbb{P}}
\newcommand {\fK}{\frak{K}}
\newcommand {\fH}{\frak{H}}
\newcommand {\fX}{\frak{X}}
\newcommand{\Pic}{\operatorname{Pic}}
\newcommand{\CH}{\operatorname{CH}}
\def\p#1{{\bP^{#1}}}
\def\ga#1{{{\accent"12 #1}}}
\def\mapright#1{\mathbin{\smash{\mathop{\longrightarrow}
\limits^{#1}}}}
\title[Steiner representations of hypersurfaces]{Steiner representations of hypersurfaces}
\thanks{The  authors are members of GNSAGA group of INdAM}
\subjclass[2020]{Primary: 14F06. Secondary: 14D21, 14J60, 14M12}
\keywords{Linear sheaf, Steiner sheaf, Instanton sheaf, Ulrich sheaf}
\author[V. Antonelli, G. Casnati]{Vincenzo Antonelli, Gianfranco Casnati}
\begin{document}

\maketitle

\begin{abstract}
Let $X\subseteq\p{n+1}$ be an integral hypersurface of degree $d$. We show that each locally Cohen--Macaulay instanton sheaf $\cE$ on $X$ with respect to $\cO_X\otimes\cO_{\p{n+1}}(1)$ in the sense of \cite[Definition 1.3]{An--Ca1} yields the existence of Steiner bundles $\cG$ and $\cF$ on $\p{n+1}$ of the same rank $r$ and a morphism $\varphi\colon \cG(-1)\to\cF^\vee$ such that the form defining $X$ to the power $\rk(\cE)$ is exactly $\det(\varphi)$. We inspect several examples for low values of $d$, $n$ and $\rk(\cE)$. In particular, we show that the form defining a smooth integral surface in $\p3$ is the pfaffian of some skew--symmetric morphism $\varphi\colon \cF(-1)\to\cF^\vee$, where $\cF$ is a suitable Steiner bundle on $\p3$ of sufficiently large even rank.
\end{abstract}

\section{Introduction}
The description of hypersurfaces in $\p{N}$ as  zero loci of suitable square matrices (possibly with some further properties, e.g. with linear entries, symmetric, skew--symmetric, etc.) is a very classical topic in algebraic geometry. We refer the interested reader to the historic comments in \cite{Bea} for a short list of references. 

Such a problem can been placed in the more general framework of the study of {\em Ulrich sheaves} on an irreducible projective scheme $P$ of dimension $N\ge1$ with respect to a very ample line bundle $\cO_P(H)$, i.e. non--zero sheaves $\cF$ on $P$ satisfying
$$
h^i\big(\cF(-(i+1)H)\big)=h^j\big(\cF(-jH)\big)=0
$$
for $i\le N-1$ and $j\ge1$: see \cite{Bea,E--S--W} for further details. While the problem of the existence of Ulrich sheaves on projective varieties is wide open in general, the Horrocks' theorem completely characterizes the ones on $\p N$: indeed they are only the direct sums of copies of  $\cO_{\p{N}}$.

In \cite[Proposition 1.11]{Bea} the author shows that on a hypersurface $X\subseteq\p{n+1}$ of degree $d$ endowed with $\cO_X(h):=\cO_X\otimes\cO_{\p{n+1}}(1)$ each $h$--Ulrich sheaf $\cE$ fits into an exact sequence of the form
\begin{equation*}
0\longrightarrow\cO_{\p{n+1}}(-1)^{\oplus r}\longrightarrow\cO_{\p{n+1}}^{\oplus r}\longrightarrow\cE\longrightarrow0,
\end{equation*}
or, in other words, has a resolution in terms of a suitable endomorphism of degree one of an Ulrich bundle on $\p{n+1}$. Moreover, the following assertions hold if $X$ is  smooth  and integral (see \cite[Corollaries 1.12 and  2.4]{Bea}):
\begin{itemize}
\item the form defining $X$ is the determinant of a $d\times d$ matrix with linear entries if and only if $X$ supports a $h$--Ulrich line bundle $\cE$;
\item the form defining $X$ is the pfaffian of a $2d\times 2d$ skew--symmetric matrix with linear entries if and only if $X$ supports a $h$--Ulrich bundle of rank two  $\cE$ such that $\det(\cE)=\cO_X((d-1)h)$.
\end{itemize}

In \cite{An--Ca1}, we somehow relaxed the notion of Ulrich sheaf by introducing the definition of $H$--instanton sheaf. More precisely, a non--zero sheaf $\cF$ on $P$ is {\em $H$--instanton with quantum number $\quantum$} if
\begin{gather*}
h^0\big(\cF(-H)\big)=h^N\big(\cF(-NH)\big)=0,\qquad h^1\big(\cF(-H)\big)=h^{N-1}\big(\cF(-NH)\big)=\quantum,\\
h^i\big(\cF(-(i+1)H)\big)=h^j\big(\cF(-jH)\big)=0
\end{gather*}
for $1\le i\le 
N-2$ and $2\le j\le N-1$ (notice that the equalities in the latter row actually contribute to the definition only when $N\ge3$). In particular, $H$--Ulrich sheaves are exactly $H$--instanton sheaves with minimal quantum number $\quantum=0$. Moreover, each $H$--instanton sheaf is $H$--Ulrich when $n=1$.

In this paper we show that instanton sheaves on  hypersurfaces $X\subseteq \p{n+1}$ have a resolution in terms of certain bundles which have very few non--zero cohomology groups, namely bundles which are linear or Steiner according the following definition (where {\em generically non--zero} means non--zero at the generic point, so that the torsion free part of the sheaf is not zero).

\begin{definition}
\label{dLinearSteiner}
Let $P$ be an irreducible projective scheme of dimension $N\ge1$ endowed with an ample and globally generated line bundle $\cO_P(H)$.

A generically non--zero coherent sheaf  $\cF$ on $P$ is called $H$--linear if the following finite set of properties hold:
\begin{itemize}
\item $h^0\big(\cF(-H)\big)=h^N\big(\cF(-NH)\big)=0$;
\item $h^i\big(\cF(-(i+1)H)\big)=h^{j}\big(\cF(-jH)\big)=0$ if $1\le i\le N-2$ and $2\le j\le N-1$.
\end{itemize}
If $\cF$ is a $H$--linear sheaf on $X$, then it is called $H$--Steiner if the following additional property holds:
\begin{itemize}
\item $h^1\big(\cF(-H)\big)=0$.
\end{itemize}
\end{definition}

Thanks to \cite[Corollary 4.3]{An--Ca1} we know that $H$--instanton sheaves are $H$--linear, but the converse is clearly not true.

\medbreak

In Section \ref{sGeneral} we recall some general facts that will be used in the paper. In Section \ref{sLinear} we deal with linear sheaves, giving a general characterization in Proposition \ref{pCharacterizationLinear} of their whole cohomology and bounding in Proposition \ref{pRegularity} their Castelnuovo--Mumford regularity. We also prove, motivating our terminology, that $\cO_{\p N}(1)$--linear sheaves are exactly the non--zero sheaves which are cohomology of a linear monad in Proposition \ref{pLinearSpace}. 

To this purpose recall that if $\varphi\colon\cA\to\cB$ is a morphism of locally free sheaves on $P$  we can define the degeneracy loci
$$
D_{r}(\varphi):=\{\ x\in P\ \vert\ \rk(\varphi_x)\le r\ \},
$$
which have a natural reduced scheme structure induced by their inclusion in $P$.

The main goal of the section is to show how one can derive linear sheaves on some hypersurfaces $X\subseteq P$ from linear bundles on $P$, when $P$ is smooth of dimension $N$ (an {\em $N$--fold} for short: in this case $\omega_P=\cO_P(K_P)$ denotes its canonical line bundle). To this purpose, recall that the rank of a sheaf on an integral scheme is defined as the rank of its stalk at the generic point. Moreover, if $\cF$ is a sheaf on the $N$--fold $P$, then its {\em Ulrich dual} with respect to an ample and globally generated line bundle $\cO_P(H)$ is $\cF^{U,H}:=\cF^\vee((N+1)H+K_P)$.

\begin{theorem}
\label{tHypersurface}
Let $P$ be an $N$--fold with $N\ge3$ endowed with an ample and globally generated line bundle $\cO_P(H)$.

If $\cG$ and $\cF$ are $H$--linear bundles of the same rank $r$ on $P$, $\varphi\colon \cG(-H)\to\cF^{U,H}$
is injective, $X:=D_{r-1}(\varphi)$ is irreducible and $\cO_X(h):=\cO_X\otimes\cO_P(H)$, then $X$ is integral  with $\dim(X)=N-1$ and $\cE:=\coker(\varphi)$ is a locally Cohen--Macaulay $h$--linear sheaf on $X$ such that 
\begin{gather*}
h^1\big(\cE(-h)\big)=h^{N-1}\big(\cF(-NH)\big),\qquad h^{N-2}\big(\cE(-(N-1)h)\big)=h^{N-1}\big(\cG(-NH)\big),\\
\begin{aligned}
rH^{N}&=\rk(\cE)h^{N-1}+h^1\big(\cE(-h)\big)+h^{N-2}\big(\cE(-(N-1)h)\big)\\
&-h^1\big(\cF(-H)\big)+h^1\big(\cG(-H)\big).
\end{aligned}
\end{gather*}
\end{theorem}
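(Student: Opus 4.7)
The plan is to organize the proof into three successive blocks: first establishing the geometry of $X$ and the basic properties of $\cE$, then verifying $h$--linearity together with the two cohomological identities, and finally extracting the numerical identity.

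For the first block, the injectivity of $\varphi\colon\cG(-H)\to\cF^{U,H}$ between locally free sheaves of the same rank $r$ forces $\det(\varphi)\in H^0(P,\det(\cF^{U,H})\otimes\det(\cG(-H))^\vee)$ to be non--zero. Its vanishing locus, with reduced structure, is exactly $D_{r-1}(\varphi)=X$; this is an effective divisor, so coupling the irreducibility hypothesis with the reduced structure yields that $X$ is integral of dimension $N-1$. The short exact sequence
\begin{equation*}
0\to\cG(-H)\xrightarrow{\varphi}\cF^{U,H}\to\cE\to 0
\end{equation*}
on $P$ shows that $\cE$ is annihilated by $\det(\varphi)$, hence supported on $X$, and that $\pd_P(\cE)=1$. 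By Auslander--Buchsbaum, $\depth_{\cO_{P,x}}(\cE_x)=N-1=\dim\cO_{X,x}$ at every closed point $x\in X$, so $\cE$ is locally Cohen--Macaulay as an $\cO_X$--module.

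Next, I would tensor the sequence by $\cO_P(-mH)$ and exploit the resulting long exact sequences. A key preliminary remark is that Serre duality on $P$ converts each cohomological vanishing of $\cF$ into one of $\cF^{U,H}=\cF^\vee((N+1)H+K_P)$, so $\cF^{U,H}$ is itself $H$--linear. For every vanishing required by Definition \ref{dLinearSteiner} for $\cE$ on $X$, the corresponding pair of terms from $\cG$ and $\cF^{U,H}$ in the long exact sequence vanishes thanks to their respective linearities; the hypothesis $N\ge 3$ ensures that the ranges $1\le i\le N-2$ and $2\le j\le N-1$ overlap as needed. For the two cohomological identities, the sequence at $m=1$ pinches at degree $1$ to an isomorphism $H^1(\cE(-h))\cong H^1(\cF^{U,H}(-H))$, which Serre duality rewrites as $h^1(\cE(-h))=h^{N-1}(\cF(-NH))$; a symmetric argument at $m=N-1$ in degree $N-2$ gives $h^{N-2}(\cE(-(N-1)h))=h^{N-1}(\cG(-NH))$.

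The hardest part is the numerical identity. My approach is to compare the Hilbert polynomial $P_\cE(m)=\chi_P(\cE(mh))=\chi_X(\cE(mh))$ in two ways. From the twisted sequence, $P_\cE(m)=\chi_P(\cF^{U,H}(mH))-\chi_P(\cG((m-1)H))$. Proposition \ref{pCharacterizationLinear} combined with Serre duality pins down the Euler characteristics $\chi_P(\cF(-mH))$ and $\chi_P(\cG(-mH))$: they vanish for $2\le m\le N-1$, while at the endpoints $m=1$ and $m=N$ they are expressible solely in terms of the quantum--type cohomologies $h^1(\cdot(-H))$ and $h^{N-1}(\cdot(-NH))$. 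Both degree--$N$ polynomials therefore share the factor $(m+2)(m+3)\cdots(m+N-2)$, so that
\begin{equation*}
P_\cE(m)=(m+2)(m+3)\cdots(m+N-2)\cdot Q(m)
\end{equation*}
for a quadratic $Q(m)$ whose coefficients can be read off from the short exact sequence. Equating the coefficient of $m^2$ in $Q$ with the leading coefficient $\rk(\cE)h^{N-1}/(N-1)!$ of $P_\cE$, and substituting the two cohomological identities of the previous paragraph, yields the claimed formula. The main technical pitfall is keeping the sign conventions from Serre duality consistent across $\cF$, $\cG$ and their Ulrich duals, and making sure that Proposition \ref{pCharacterizationLinear} really forces the individual intermediate cohomology groups (not merely Euler characteristics) to vanish, so that no unexpected contribution from $h^i(\cF(-mH))$ or $h^i(\cG(-mH))$ sneaks into the polynomial computation.
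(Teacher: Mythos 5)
Your first two blocks coincide with the paper's own argument: the short exact sequence $0\to\cG(-H)\to\cF^{U,H}\to\cE\to0$, integrality of $X$ from reducedness of $P$ plus the irreducibility hypothesis, local Cohen--Macaulayness via Auslander--Buchsbaum from $\pd_P(\cE)\le1$, and the vanishings and the two identities $h^1(\cE(-h))=h^{N-1}(\cF(-NH))$, $h^{n-1}(\cE(-nh))=h^{N-1}(\cG(-NH))$ from the twisted long exact sequences combined with Remark \ref{rDual} and Serre duality. For the numerical identity you take a mild variant of the paper's route: the paper evaluates $\chi(\cE)$ at twist $0$ using Corollary \ref{cCharacteristicLinear} together with the relation $\chi=\rk\cdot\deg-N h^1(\cdot(-H))+h^{N-1}(\cdot(-NH))$ coming from the monad Betti numbers of the pushforward, whereas you compare the coefficient of $m^{N-1}$ in $P_\cE(m)=\chi(\cF^{U,H}(mH))-\chi(\cG((m-1)H))$ with $\rk(\cE)h^{N-1}/(N-1)!$. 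Both are valid and amount to the same bookkeeping; your factorization claim is also consistent (the common roots of the two constituent polynomials are exactly $m=-2,\dots,-(N-2)$).

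One concrete warning before you execute the last step: if you carry out the coefficient comparison carefully you will obtain
\begin{equation*}
rH^N=\rk(\cE)h^{N-1}+h^1\big(\cE(-h)\big)+h^{N-2}\big(\cE(-(N-1)h)\big)-h^1\big(\cF(-H)\big)-h^1\big(\cG(-H)\big),
\end{equation*}
i.e.\ the last term comes out with a minus sign, not the plus sign in the printed statement. The discrepancy originates in the paper's own proof: for an $H$--linear bundle one has $\chi(\cG(-H))=-h^1(\cG(-H))$, so $-\chi(\cG(-H))=+h^1(\cG(-H))$, whereas the displayed computation of $\chi(\cE)$ writes $-h^1(\cG(-H))$ at that point and the sign propagates to the final formula. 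A sanity check supporting the minus sign: take $P=\p3$, $\cF=\cO_{\p3}^{\oplus3}$, $\cG=\Omega^1_{\p3}(1)$ and $\varphi$ general; then $X$ is a quartic surface, $\cE$ is a rank--one $h$--Ulrich sheaf with $\chi(\cE)=4=\rk(\cE)h^2$, all the $h^{N-1}(\cdot(-NH))$ corrections vanish, $h^1(\cF(-H))=0$, $h^1(\cG(-H))=1$, and $rH^3=3$, which forces $3=4-1$. So do not adjust your computation to reproduce the sign in the statement; flag it instead. (The same issue affects Corollary \ref{cHypersurface}, which implicitly needs the extra terms $h^1(\cF(-H))$ and $h^1(\cG(-H))$ to drop out, e.g.\ by assuming $\cF$ and $\cG$ Steiner.)
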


As an immediate by--product we obtain the following method for constructing $h$--instanton sheaves on hypersurfaces.

\begin{corollary}
\label{cHypersurface}
Let $P$ be an $N$--fold with $N\ge3$ endowed with an ample and globally generated line bundle $\cO_P(H)$.

If $\cG$ and $\cF$ are a $H$--linear bundles of the same rank $r$ on $P$, $\varphi\colon \cG(-H)\to\cF^{U,H}$
is injective, $X:=D_{r-1}(\varphi)$ is irreducible, $\cO_X(h):=\cO_X\otimes\cO_P(H)$ and 
$$
h^{N-1}\big(\cF(-NH)\big)=h^{N-1}\big(\cG(-NH)\big)=\quantum,
$$
then $X$ is integral with $\dim(X)=N-1$ and $\cE:=\coker(\varphi)$ is a locally Cohen--Macaulay $h$--instanton sheaf on $X$ with quantum number
$\quantum$ and
$$
rH^N=\rk(\cE)h^{N-1}+2\quantum.
$$
\end{corollary}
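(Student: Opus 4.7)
The strategy is to deduce the corollary directly from Theorem \ref{tHypersurface} by specializing the two cohomological quantities appearing there.

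First, all hypotheses of Theorem \ref{tHypersurface} are included in those of the corollary, so the theorem applies verbatim: $X$ is integral of dimension $n:=N-1$, the sheaf $\cE:=\coker(\varphi)$ is locally Cohen--Macaulay and $h$--linear, and the two cohomological identities of the theorem hold.

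Next, plugging the additional hypothesis $h^{N-1}\big(\cF(-NH)\big)=h^{N-1}\big(\cG(-NH)\big)=\quantum$ into those identities yields
\begin{equation*}
h^1\big(\cE(-h)\big)=\quantum\qquad\text{and}\qquad h^{n-1}\big(\cE(-nh)\big)=\quantum.
\end{equation*}
Combined with the vanishing conditions already encoded in the $h$--linear property of $\cE$, this is precisely the profile of an $h$--instanton sheaf with quantum number $\quantum$ in the sense recalled in the introduction; hence $\cE$ is such a sheaf on $X$.

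Finally, substituting the values $h^1\big(\cE(-h)\big)=h^{N-2}\big(\cE(-(N-1)h)\big)=\quantum$ into the degree identity of Theorem \ref{tHypersurface} produces the expression $\rk(\cE)h^{N-1}+2\quantum$, up to the residual correction $h^1\big(\cG(-H)\big)-h^1\big(\cF(-H)\big)$. Verifying that this correction vanishes is the only delicate point; the natural way is to use Serre duality on $P$ (which identifies $h^1\big(\cF(-H)\big)$ with $h^{N-1}\big(\cF^{U,H}(-NH)\big)$, and similarly for $\cG$) together with the cohomological symmetry of the defining short exact sequence $0\to\cG(-H)\to\cF^{U,H}\to\cE\to0$ under Ulrich dualization. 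The main obstacle is precisely this cancellation; everything else reduces to rewriting the conclusions of Theorem \ref{tHypersurface}.
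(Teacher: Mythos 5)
Your first two steps coincide with the paper's intended argument (the corollary is presented there as an immediate by--product of Theorem \ref{tHypersurface}), and they are fine: specializing the two cohomological identities of the theorem gives $h^1\big(\cE(-h)\big)=h^{n-1}\big(\cE(-nh)\big)=\quantum$, which together with $h$--linearity and local Cohen--Macaulayness is exactly the definition of an $h$--instanton sheaf with quantum number $\quantum$. The gap is in the third step, which you rightly isolate but never carry out: you do not prove that the residual term $h^1\big(\cG(-H)\big)-h^1\big(\cF(-H)\big)$ vanishes, and the route you sketch cannot do it. Serre duality merely rewrites $h^1\big(\cF(-H)\big)$ as $h^{N-1}\big(\cF^{U,H}(-NH)\big)$, and dualizing $0\to\cG(-H)\to\cF^{U,H}\to\cE\to0$ gives the analogous sequence with $\cF$ and $\cG$ exchanged; none of this ties $h^1\big(\cF(-H)\big)$ to $h^1\big(\cG(-H)\big)$, since the hypotheses only constrain $h^{N-1}\big(\cF(-NH)\big)$ and $h^{N-1}\big(\cG(-NH)\big)$.

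In fact no argument can supply the missing cancellation, because the clean identity is not a formal consequence of the stated hypotheses. Redoing the Euler characteristic computation in the proof of Theorem \ref{tHypersurface}, for a linear bundle one has $\chi\big(\cG(-H)\big)=-h^1\big(\cG(-H)\big)$ (only $h^1$ survives at that twist), so the degree identity should read $rH^N=\rk(\cE)h^{N-1}+h^1\big(\cE(-h)\big)+h^{N-2}\big(\cE(-(N-1)h)\big)-h^1\big(\cF(-H)\big)-h^1\big(\cG(-H)\big)$; the correction disappears precisely when $\cF$ and $\cG$ are $H$--Steiner, which is the situation in all the applications (Theorem \ref{tHypersurfaceSpace}, Corollary \ref{cHypersurfaceSpace}). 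A concrete test: on $P=\p3$ take $\cF=\cG$ a mathematical instanton bundle of rank $2$ and charge $\quantum\ge1$ (it is linear with respect to $\cO_{\p3}(1)$, with $h^1\big(\cF(-1)\big)=h^{2}\big(\cF(-3)\big)=\quantum$ and $\cF^{U,H}\cong\cF^\vee\cong\cF$), and let $\varphi=\ell\cdot\mathrm{id}$ for a linear form $\ell$. All hypotheses of the corollary hold, $X$ is the plane $\ell=0$, and $\cE\cong\cF\otimes\cO_X$ is a locally free $h$--instanton of rank $2$ with quantum number $\quantum$, yet $rH^N=2$ while $\rk(\cE)h^{N-1}+2\quantum=2+2\quantum$. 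So your instinct that the cancellation is the delicate point is sound, but the proposed Serre--duality argument cannot close it: the numerical conclusion requires either the additional hypothesis $h^1\big(\cF(-H)\big)=h^1\big(\cG(-H)\big)=0$ (i.e. $\cF,\cG$ Steiner) or the reinstated correction terms.
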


In Section \ref{sSteiner} the notion of Steiner bundle is inspected in a similar way. In particular we characterize in Proposition \ref{pCharacterizationSteiner} the whole cohomology of a Steiner sheaf, we describe in Proposition \ref{pSteinerResolution} Steiner sheaves on irreducible subschemes of $\p N$ in terms of their minimal free resolution, showing in Corollary \ref{cSteinerSpace} that our definition matches the classical one (see \cite[Definition 3.1]{Do--Ka}). 

If $P\cong\p {n+1}$ and $\cO_P(H)\cong\cO_{\p {n+1}}(1)$, then $\cF^{U,H}\cong\cF^\vee$ and Theorem \ref{tHypersurface} can be reversed as follows.

\begin{theorem}
\label{tHypersurfaceSpace}
Let $X\subseteq\p{n+1}$ with $n\ge 2$ be an integral hypersurface of degree $d$ and $\cO_X(h):=\cO_{X}\otimes\cO_{\p {n+1}}(1)$.

If $\cE$ is a locally Cohen--Macaulay $h$--linear sheaf on $X$, then there exists an exact sequence
\begin{equation}
\label{seqSteinerInstanton}
0\longrightarrow\cG(-1)\mapright\varphi\cF^\vee\longrightarrow\cE\longrightarrow0
\end{equation}
where $\cG$ and $\cF$ are Steiner bundles of the same rank on $\p{n+1}$ and $\det(\varphi)$ is the $\rk(\cE)^{\textrm{th}}$--power of the form defining $X$.
\end{theorem}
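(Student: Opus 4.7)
The plan is to realise $\cE$, pushed forward from $X$ to $\p{n+1}$, as the middle cohomology of a 4--term complex of line bundles
\begin{equation*}
0\to\cO_{\p{n+1}}(-2)^{\alpha_2}\to\cO_{\p{n+1}}(-1)^{\alpha_1}\to\cO_{\p{n+1}}^{\alpha_0}\to\cO_{\p{n+1}}(1)^{\alpha_{-1}}\to 0,
\end{equation*}
and then to split this complex at its middle so that the two halves yield the short exact sequence \eqref{seqSteinerInstanton}. Since $\cE$ is locally Cohen--Macaulay of codimension $1$ on the smooth ambient $\p{n+1}$, the Auslander--Buchsbaum formula gives $\pd_{\cO_{\p{n+1}}}\cE=1$; the $h$--linear conditions together with Proposition~\ref{pCharacterizationLinear} pin down the cohomology table of $\cE$. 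A Beilinson--type spectral sequence converging to $\cE$ on $\p{n+1}$ is then supported in only a handful of spots, and a careful analysis of its differentials produces the displayed 4--term complex.

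Once this complex is at hand, one sets
\begin{equation*}
\cF^\vee:=\ker\bigl(\cO_{\p{n+1}}^{\alpha_0}\to\cO_{\p{n+1}}(1)^{\alpha_{-1}}\bigr),\qquad \cG(-1):=\coker\bigl(\cO_{\p{n+1}}(-2)^{\alpha_2}\to\cO_{\p{n+1}}(-1)^{\alpha_1}\bigr).
\end{equation*}
Dualising the short exact sequence defining $\cF^\vee$ recovers $0\to\cO_{\p{n+1}}(-1)^{\alpha_{-1}}\to\cO_{\p{n+1}}^{\alpha_0}\to\cF\to 0$, so by Proposition~\ref{pSteinerResolution} and Corollary~\ref{cSteinerSpace}, $\cF$ is a Steiner bundle; after untwisting by $1$, the same argument exhibits $\cG$ as a Steiner bundle. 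The original 4--term complex splices into \eqref{seqSteinerInstanton}, with $\varphi$ the induced map $\cG(-1)\hookrightarrow\cF^\vee$. Since $\cE$ is supported on the divisor $X$, it has rank $0$ as an $\cO_{\p{n+1}}$--module, and comparing ranks in \eqref{seqSteinerInstanton} immediately forces $\rk(\cF)=\rk(\cG)$.

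For the determinantal part, $\varphi\colon\cG(-1)\to\cF^\vee$ is a morphism between locally free sheaves of the same rank $r$ whose cokernel $\cE$ has pure support $X$, so $\det(\varphi)$ is a nonzero section of the line bundle $\det(\cF^\vee)\otimes\det(\cG(-1))^{-1}$, whose degree on $\p{n+1}$ one checks equals $d\cdot\rk(\cE)$. The Fitting ideal description of $\cE$, combined with the fact that $\cE$ is generically locally free of rank $\rk(\cE)$ on $X$, forces $\det(\varphi)$ to vanish on $X$ with multiplicity exactly $\rk(\cE)$, whence $\det(\varphi)=f_X^{\rk(\cE)}$ up to a nonzero scalar.

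The hard part of the argument is the Beilinson step: producing the 4--term complex with the correct multiplicities $\alpha_i$ and, crucially, guaranteeing that the sheaves $\ker\bigl(\cO_{\p{n+1}}^{\alpha_0}\to\cO_{\p{n+1}}(1)^{\alpha_{-1}}\bigr)$ and $\coker\bigl(\cO_{\p{n+1}}(-2)^{\alpha_2}\to\cO_{\p{n+1}}(-1)^{\alpha_1}\bigr)$ are locally free (so that $\cF$ and $\cG$ are genuine Steiner \emph{bundles}, not merely Steiner \emph{sheaves}). This is exactly where the locally Cohen--Macaulay hypothesis on $\cE$ and the full content of Proposition~\ref{pCharacterizationLinear} are used.
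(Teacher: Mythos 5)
Your overall architecture (a Beilinson--type argument, splicing the result into the short exact sequence \eqref{seqSteinerInstanton}, and a degree count for $\det(\varphi)$) matches the paper's, and your treatment of the determinantal part is essentially the published one. However, the central step is asserted rather than proved, and the mechanism you invoke for it does not work as stated. The Beilinson spectral sequence whose terms are twists of line bundles has $E_1$--terms governed by $h^q\big(\cE\otimes\Omega_{\p{n+1}}^{-p}(-p)\big)$, and these groups are \emph{not} controlled by the $h$--linearity of $\cE$ (which only constrains the $h^q\big(\cE(ph)\big)$); moreover no untwisted Beilinson complex for $\cE$ contains a summand $\cO_{\p{n+1}}(1)$, so your four--term complex cannot be read off ``from the cohomology table'' of $\cE$. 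What the paper actually does is apply the strong form of Beilinson's theorem in the version whose terms are the $\Omega_{\p{n+1}}^{-p}(-p)$ (this \emph{is} governed by the table of $h^q\big(\cE(ph)\big)$), obtaining subsheaves $\cG(-1)\subseteq\widehat{\cF}$ with $\cE\cong\widehat{\cF}/\cG(-1)$ and presentations involving $\cO_{\p{n+1}}$, $\Omega^1_{\p{n+1}}(1)$ and $\cO_{\p{n+1}}(-1)$, $\Omega^n_{\p{n+1}}(n)$; only afterwards are these converted into resolutions by line bundles, via the intrinsic characterization of Steiner sheaves (Example \ref{eKernel}, Proposition \ref{pSteinerResolution}, Corollary \ref{cSteinerSpace}). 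Your four--term complex of line bundles exists \emph{a posteriori}, as the splice of the two Steiner resolutions with \eqref{seqSteinerInstanton}, but it is an output of the theorem, not an input; in addition, your splicing requires exactness of that complex at three of its four spots, which is a further unproved claim.

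The second gap is the local freeness of $\cG$, which you correctly identify as essential (otherwise $\cG$ is only a Steiner sheaf and $\det(\varphi)$ is not defined) but do not establish. The actual argument is short and you should supply it: for $x\notin X$ one has $\cG(-1)_x\cong\cF^\vee_x$; for $x\in X$, since $\cE$ is locally Cohen--Macaulay with $\dim(\cE_x)=n$, one gets $\depth_{\cO_{\p{n+1},x}}\cE_x=n$, hence $\pd_{\cO_{\p{n+1},x}}\cE_x=1$ by Auslander--Buchsbaum, and then the sequence $0\to\cG(-1)_x\to\cF^\vee_x\to\cE_x\to0$ with $\cF^\vee_x$ free forces $\pd_{\cO_{\p{n+1},x}}\cG(-1)_x=0$. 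Note that this reasoning needs $\cF^\vee$ (the middle term) to be known to be locally free first, which is automatic in the paper's construction because $\widehat{\cF}$ is exhibited as the kernel of a surjection of vector bundles; in your setup the analogous statement for $\ker\bigl(\cO_{\p{n+1}}^{\alpha_0}\to\cO_{\p{n+1}}(1)^{\alpha_{-1}}\bigr)$ again hinges on the unproved surjectivity of that map.
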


As in the previous case the above theorem leads to the following immediate result on locally Cohen--Macaulay $h$--instanton sheaves on $X$. 

\begin{corollary}
\label{cHypersurfaceSpace}
Let $X\subseteq\p {n+1}$ with $n\ge 2$ be an integral hypersurface of degree $d$ and $\cO_X(h):=\cO_{X}\otimes\cO_{\p {n+1}}(1)$.

If $\cE$ is a locally Cohen--Macaulay $h$--instanton sheaf on $X$ with quantum number $\quantum$, then $\cE$ fits into  sequence \eqref{seqSteinerInstanton} where $\cG$ and $\cF$ are Steiner bundles of the same rank on $\p{n+1}$, $\det(\varphi)$ is the $\rk(\cE)^{\textrm{th}}$--power of the form defining $X$ and
$$
h^{n}\big(\cG(-n-1)\big)=h^{n}\big(\cF(-n-1)\big)=\quantum.
$$
\end{corollary}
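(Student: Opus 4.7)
My plan is to deduce this corollary essentially directly from Theorem~\ref{tHypersurfaceSpace}. Since by \cite[Corollary 4.3]{An--Ca1} every $h$--instanton sheaf is in particular $h$--linear, Theorem~\ref{tHypersurfaceSpace} immediately produces the sequence \eqref{seqSteinerInstanton} with $\cG$ and $\cF$ Steiner of the same rank on $\p{n+1}$ and with $\det(\varphi)$ equal to the $\rk(\cE)^{\textrm{th}}$--power of the form defining $X$. What remains is to identify $h^n(\cG(-n-1))$ and $h^n(\cF(-n-1))$ with $\quantum$, and I plan to obtain this by chasing cohomology in two suitable twists of \eqref{seqSteinerInstanton}, transported via Serre duality on $\p{n+1}$.

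For $h^n(\cF(-n-1))$, I would use $\omega_{\p{n+1}}=\cO_{\p{n+1}}(-n-2)$ to rewrite $h^n(\cF(-n-1))=h^1(\cF^\vee(-1))$ by Serre duality. Twisting \eqref{seqSteinerInstanton} by $\cO_{\p{n+1}}(-1)$ produces the portion
$$H^1(\cG(-2))\lra H^1(\cF^\vee(-1))\lra H^1(\cE(-h))\lra H^2(\cG(-2))$$
of the long exact sequence in cohomology, and the Steiner property of $\cG$ on $\p{n+1}$ makes both outer groups vanish: $h^1(\cG(-2))$ and $h^2(\cG(-2))$ are precisely the cases $i=1$ and $j=2$ of Definition~\ref{dLinearSteiner}, both admissible because $n\ge 2$. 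Hence $h^n(\cF(-n-1))=h^1(\cE(-h))$, which equals $\quantum$ by the $h$--instanton hypothesis on $\cE$.

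By a parallel argument I would twist \eqref{seqSteinerInstanton} by $\cO_{\p{n+1}}(-n)$ and examine
$$H^{n-1}(\cF^\vee(-n))\lra H^{n-1}(\cE(-nh))\lra H^n(\cG(-n-1))\lra H^n(\cF^\vee(-n)).$$
Serre duality identifies the outer terms with $h^2(\cF(-2))^\vee$ and $h^1(\cF(-2))^\vee$, both of which vanish because $\cF$ is Steiner (the same two conditions of Definition~\ref{dLinearSteiner}, now applied to $\cF$). This gives $h^n(\cG(-n-1))=h^{n-1}(\cE(-nh))=\quantum$ by the second instanton condition.

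The only real obstacle is bookkeeping: one has to line up the Serre duals so that each vanishing needed on $\p{n+1}$ coincides with one of the conditions in Definition~\ref{dLinearSteiner}, and check that the hypothesis $n\ge 2$ is exactly what places the twists $\cO_{\p{n+1}}(-2)$ within the admissible window for the Steiner vanishings of $\cG$ and $\cF$.
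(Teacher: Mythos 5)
Your deduction is correct. The paper itself states this corollary as an immediate consequence of Theorem \ref{tHypersurfaceSpace}, but the identifications $h^{n}\big(\cF(-n-1)\big)=h^1\big(\cE(-h)\big)$ and $h^{n}\big(\cG(-n-1)\big)=h^{n-1}\big(\cE(-nh)\big)$ are not part of that theorem's statement: in the paper they are read off inside its proof, where the Beilinson construction produces $\widehat\cF$ and $\cG(-1)$ in sequences \eqref{XXX} with multiplicities $\quantum=h^1\big(\cE(-h)\big)$ and $\quantum'=h^{n-1}\big(\cE(-nh)\big)$ attached to $\Omega^1_{\p{n+1}}(1)$ and $\Omega^n_{\p{n+1}}(n)$, and Example \ref{eKernel} converts these into the asserted values of $h^{n}\big(\cF(-n-1)\big)$ and $h^{n}\big(\cG(-n-1)\big)$. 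You instead recover the same equalities a posteriori from the sequence \eqref{seqSteinerInstanton} alone: Serre duality turns $h^{n}\big(\cF(-n-1)\big)$ into $h^1\big(\cF^\vee(-1)\big)$, and the twists by $\cO_{\p{n+1}}(-1)$ and $\cO_{\p{n+1}}(-n)$ together with the vanishings $h^1\big(\cG(-2)\big)=h^2\big(\cG(-2)\big)=0$ and $h^1\big(\cF(-2)\big)=h^2\big(\cF(-2)\big)=0$ (which are in fact already linear-sheaf conditions, valid precisely because $n\ge2$) give isomorphisms with $H^1\big(\cE(-h)\big)$ and $H^{n-1}\big(\cE(-nh)\big)$; both chases are accurate, including the exactness and duality bookkeeping. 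What your route buys is that it treats Theorem \ref{tHypersurfaceSpace} as a black box and shows the quantum-number identification holds for \emph{any} resolution of the shape \eqref{seqSteinerInstanton} with Steiner bundles, not just the one constructed via Beilinson's theorem — which is consistent with, and in a sense anticipates, the uniqueness statement of Proposition \ref{pUnique} — while the paper's route gets the equalities with no extra work but only for the specific resolution built in the proof.
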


Notice that the above statements are well--known for $N=n+1=2$, because in this case each instanton sheaf is actually Ulrich: see \cite[Sections 3, 4 and 5]{Bea}.

The statements above can be interpreted in terms of determinantal representations of the form defining $X$. For this reason we introduce the following definition.

\begin{definition}
Let $X\subseteq\p {n+1}$ with $n\ge 2$ be a hypersurface of degree $d$.

We say that $X$ has a Steiner $r$--representation if there are Steiner bundles $\cG$ and  $\cF$ of the same rank on $\p{n+1}$ such that 
$$
h^{n}\big(\cG(-n-1)\big)=h^{n}\big(\cF(-n-1)\big),
$$
and  a morphism $\varphi\colon\cG(-1)\to\cF^\vee$ such that $\det(\varphi)$ is the $r^{\textrm{th}}$ power of the form defining $X$. In particular, we say that $X$ has a {\em Steiner determinantal representation} if $r=1$ and a {\em Steiner pfaffian representation} if $r=2$. 

If $X$ has a Steiner representation $\varphi\colon\cG(-1)\to\cF^\vee$, the size of the representation is the common value $\rk(\cG)=\rk(\cF)$.
\end{definition}

Corollaries \ref{cHypersurfaceSpace} and \ref{cHypersurface} say us that the integral hypersurface $X\subseteq\p{n+1}$ of degree $d$ supports a locally Cohen--Macaulay $h$--instanton sheaf $\cE$ with quantum number $\quantum$ and rank $r$ if and only if it has a Steiner $r$--representation of size $\rk(\cE)d+2\quantum$, $\quantum$ being the quantum number of $\cE$.

If $X$ is smooth, then an $h$--instanton sheaf with quantum number $\quantum=0$ is actually an Ulrich bundle. In this particular case $\cE$ is $h$--Ulrich and the same is true for $\cF$ and $\cG$, thus both $\cG$ and $\cF$ are trivial. In this case $X$ has a linear representation i.e. the $r^{\textrm{th}}$ power of the form defining $X$ is the determinant of a matrix of linear forms. 

Thanks to \cite[Corollary 4.3]{An--Ca1}, if $\cE$ is an instanton sheaf on the smooth hypersurface $X\subseteq\p{n+1}$, then  $\dim(\cE_x)=n$ at each $x\in X$. If $\cE$ is also locally Cohen--Macaulay, then it is locally free  on $X$ thanks to the Auslander--Buchsbaum formula (see \cite[Theorem 1.3.3]{Br--He}).  We have two different ways for making the size of the representation as small as possible: indeed, we can minimize either $\quantum$ or $\rk(\cE)$. 

The former case  corresponds to the study of the existence of $h$--Ulrich bundles on $X$, hence in this paper we focus our attention in minimizing $\rk(\cE)$. In particular, in the last three sections, we study what can be said for $\rk(\cE)\le 2$ giving some examples.

More precisely, in Section \ref{sDeterminantal} we study the case of a Steiner determinantal representation of $X$. If $n\ge 3$ (resp. $n=2$) we know that $\Pic(X)$ is principal and generated by $\cO_X(h)$ for the smooth (resp. very general smooth) hypersurface $X$, hence there are no $h$--instanton line bundles on such an $X$ thanks to \cite[Proposition 8.2]{An--Ca1}. In particular, the problem of finding a Steiner determinantal representation of $X$ makes sense only if $n=2$ and in this case we are able to prove the following theorem.

\begin{theorem}
\label{tDeterminantal}
Let $X\subseteq\p3$ be a smooth surface of degree $d\ge 2$. Assume that the characteristic of $\field$ is $0$.

Then $X$ has a Steiner determinantal representation of size $s$ if and only if 
$$
\quantum:=\frac{s-d}2
$$
is a non--negative integer and there is a smooth curve $C\subseteq X$ with
\begin{equation}
\label{InvariantsC}
\begin{gathered}
\deg(C)=\frac12{d(d+2\quantum-1)},\\ p_a(C)=(d+2\quantum){{d+\quantum-2}\choose2}-\quantum{{d+\quantum-1}\choose2}-\quantum{{d+\quantum-3}\choose2}-{{d-1}\choose3}
\end{gathered}
\end{equation}
and such that $h^0\big(\cO_X(C-(\quantum+1)h)\big)=h^0\big(\cO_X((d+\quantum-2)h-C)\big)=0$.
\end{theorem}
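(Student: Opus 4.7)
My plan is to use Corollary \ref{cHypersurfaceSpace} as a bridge. Taking $\rk(\cE)=1$ there, a Steiner determinantal representation of size $s$ of $X$ is equivalent to a locally Cohen--Macaulay $h$-instanton sheaf $\cE$ on $X$ of rank one with quantum number $\quantum$, subject to $s = d + 2\quantum$. Since $X$ is smooth, the Auslander--Buchsbaum formula forces $\cE$ to be a line bundle, so $\quantum = (s-d)/2$ must be a non-negative integer. The problem thus reduces to characterizing such $h$-instanton line bundles on $X$ via the ansatz $\cE \cong \cO_X(C-\quantum h)$ for a smooth curve $C \subseteq X$.

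\textbf{Sufficiency.} Given $C$ as in the statement, I set $\cE := \cO_X(C-\quantum h)$ and verify the four surface instanton conditions: $h^0(\cE(-h)) = h^2(\cE(-2h)) = 0$ and $h^1(\cE(-h)) = h^1(\cE(-2h)) = \quantum$. The two vanishings translate, via Serre duality with $K_X = (d-4)h$, into $h^0(\cO_X(C-(\quantum+1)h)) = 0$ and $h^0(\cO_X((d+\quantum-2)h-C)) = 0$, which are the theorem's hypotheses. The intermediate vanishings $h^0(\cE(-2h)) = h^2(\cE(-h)) = 0$ follow from injectivity of multiplication by a non-zero linear form on the torsion-free line bundle $\cE$. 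The two equalities $h^1 = \quantum$ then reduce to $\chi(\cE(-h)) = \chi(\cE(-2h)) = -\quantum$; Riemann--Roch on $X$ expresses these in terms of $C\cdot h$ and $C^2$, and after the adjunction identity $2p_a(C) - 2 = C^2 + (d-4)C\cdot h$ one recovers exactly the values in \eqref{InvariantsC}.

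\textbf{Necessity.} Given an $h$-instanton line bundle $\cE$ with quantum $\quantum$, I need to produce a smooth $C$ with $\cE \cong \cO_X(C-\quantum h)$. Serre duality applied to $\cE(\quantum h)$ gives $h^2(\cE(\quantum h)) = h^0(\cO_X((d-4-\quantum)h - L))$, where $\cE = \cO_X(L)$; since this sheaf injects into $\cO_X((d-2)h - L)$ via multiplication by a section of $\cO_X((\quantum+2)h)$, and $h^0(\cO_X((d-2)h - L)) = h^2(\cE(-2h)) = 0$, we obtain $h^2(\cE(\quantum h)) = 0$. A Riemann--Roch computation, together with the cohomology already determined at twists $-h$ and $-2h$, shows $\chi(\cE(\quantum h)) \geq 1$, hence $H^0(\cE(\quantum h)) \neq 0$. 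The zero locus of a general section is an effective divisor $C_0$ with $\cO_X(C_0) \cong \cE(\quantum h)$; Bertini's theorem in characteristic zero (whence the hypothesis on $\field$) then yields a smooth representative $C$. The numerical and vanishing hypotheses on $C$ follow by the same computation as in the sufficiency step.

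The main obstacle is the necessity direction, specifically the smoothness of $C$: the positivity of $h^0(\cE(\quantum h))$ requires a careful Riemann--Roch estimate, and guaranteeing smoothness via Bertini requires handling the base locus of $|\cE(\quantum h)|$. The characteristic-zero assumption is used precisely here, to ensure that a general element of the mobile part of the linear system is smooth.
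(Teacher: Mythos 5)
Your overall strategy coincides with the paper's: reduce via Corollaries \ref{cHypersurface} and \ref{cHypersurfaceSpace} to the existence of a rank--one locally Cohen--Macaulay $h$--instanton sheaf, note that smoothness of $X$ forces it to be a line bundle, and translate the instanton conditions for $\cE=\cO_X(C-\quantum h)$ into the degree/genus data and the two $h^0$--vanishings via Riemann--Roch, adjunction and Serre duality. The sufficiency direction as you describe it is sound (your sign $\chi(\cE(-h))=\chi(\cE(-2h))=-\quantum$ is the correct one), and your derivation of the auxiliary vanishings $h^0(\cE(-2h))=h^2(\cE(-h))=0$ by multiplication by a linear form is fine.

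The gap is in the necessity direction, exactly where you flag it, and your proposed workaround does not close it. Knowing only $H^0\big(\cE(\quantum h)\big)\ne0$ gives an effective divisor $C_0\in\vert\cE(\quantum h)\vert$, but Bertini in characteristic zero only guarantees smoothness of the general member away from the base locus; if $\vert\cE(\quantum h)\vert$ has base points or a fixed component you get no smooth member of that system. Passing to ``the mobile part'' is not a repair: a smooth general member of the mobile part lies in a different linear equivalence class, so it does not satisfy $\cO_X(C)\cong\cE(\quantum h)$, and the required conditions $h^0\big(\cO_X(C-(\quantum+1)h)\big)=h^0\big(\cO_X((d+\quantum-2)h-C)\big)=0$ (which are Serre--dual to the instanton vanishings of $\cE$, not of some smaller class) are then no longer available. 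What is missing is base--point--freeness of the full system, and this is precisely what the paper supplies: by Proposition \ref{pRegularity} a linear (in particular instanton) sheaf satisfies $\reg(\cE)\le h^1\big(\cE(-h)\big)=\quantum$, so $\cE(\quantum h)$ is $0$--regular, hence globally generated by Mumford's theorem; then $\vert\cE(\quantum h)\vert$ is base--point--free, is nonempty (global generation also makes your separate estimate $\chi(\cE(\quantum h))\ge1$, which you only assert, unnecessary), and Bertini in characteristic zero produces the smooth $C$ in the correct class. Incorporating the regularity bound turns your sketch into the paper's proof; without it, the step ``Bertini yields a smooth representative $C$'' is unjustified.
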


Moreover, in the same section, we also inspect in more details the case of smooth surfaces of degree $2\le d\le 4$.

In Section \ref{sPfaffian} we focus on integral hypersurfaces $X\subseteq\p {n+1}$ of degree $d$ supporting  an instanton sheaf $\cE$ which is {\em $\varepsilon$--orientable}, i.e. locally Cohen--Macaulay, reflexive and endowed with a bilinear map $\cE\times\cE\to\det(\cE)\cong\cO_X((d-1)h)$ inducing an isomorphism $\eta\colon\cE\to\cE^\vee((d-1)h)$ such that $\eta^\vee(d-1)=\varepsilon\eta$ where $\varepsilon=\pm1$.

In particular we extend \cite[Theorem B]{Bea} as follows. 

\begin{theorem}
\label{tPfaffian}
Let $X\subseteq\p {n+1}$ with $n\ge 2$ be an integral hypersurface of degree $d$ and $\cO_X(h):=\cO_{X}\otimes\cO_{\p {n+1}}(1)$.

If $\cE$ is an $\varepsilon$--orientable $h$--instanton sheaf on $X$ with quantum number $\quantum$, then there exists an exact sequence
\begin{equation}
\label{seqPfaffian}
0\longrightarrow\cF(-1)\mapright\varphi\cF^\vee\longrightarrow\cE\longrightarrow0
\end{equation}
where $\varphi^\vee(-1)=\varepsilon\varphi$ and $\cF$ is a Steiner bundle with respect to $\cO_{\p{n+1}}(1)$ such that 
$$
h^{n}\big(\cF(-n-1)\big)=\quantum,\qquad \rk(\cF)=\rk(\cE)d+2\quantum.
$$
\end{theorem}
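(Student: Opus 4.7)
The strategy is to adapt Beauville's pfaffian argument \cite[Theorem B]{Bea} from the Ulrich to the Steiner setting. Apply Corollary \ref{cHypersurfaceSpace} to $\cE$ to obtain an exact sequence
$$0 \to \cG_0(-1) \xrightarrow{\varphi_0} \cF_0^\vee \to \cE \to 0 \qquad (*)$$
with $\cG_0, \cF_0$ Steiner bundles of common rank $r = \rk(\cE)d + 2\quantum$ and $h^n(\cG_0(-n-1)) = h^n(\cF_0(-n-1)) = \quantum$. The remaining task is to show that, up to isomorphism, one may take $\cG_0 = \cF_0$ and arrange $\varphi_0^\vee(-1) = \varepsilon\varphi_0$.

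Dualize $(*)$ via $D := \sHom_{\bP^{n+1}}(-,\cO_{\bP^{n+1}}(-1))$. Since $\cE$ is locally Cohen--Macaulay of codimension one, $\sHom(\cE,\cO(-1)) = 0$ while Grothendieck--Serre duality gives $\sExt^1(\cE,\cO(-1)) \cong \cE^\vee((d-1)h)$, so $D$ sends $(*)$ to
$$0 \to \cF_0(-1) \xrightarrow{\varphi_0^\vee(-1)} \cG_0^\vee \to \cE^\vee((d-1)h) \to 0 \qquad (**)$$
and is an involution swapping the two sequences. On the $\cE$-term, $D$ restricts to $(\cdot)^\vee(d-1)$, which sends $\eta$ to $\varepsilon\eta$ by orientability. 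The crucial step is to lift $\eta$ to a morphism (hence isomorphism) of complexes $\Phi = (\alpha,\beta,\eta)\colon(*)\to(**)$, with $\alpha\colon\cG_0(-1)\to\cF_0(-1)$ and $\beta\colon\cF_0^\vee\to\cG_0^\vee$. Applying $\Hom(\cF_0^\vee,-)$ to $(**)$, the obstruction to $\beta$ lies in $\Ext^1(\cF_0^\vee,\cF_0(-1)) = H^1(\cF_0\otimes\cF_0(-1))$. Tensoring the Steiner monad $0\to\cO(-1)^a\to\cO^b\to\cF_0\to0$ of one factor with $\cF_0(-1)$ and chasing the long exact sequence, the Steiner and $H$-linear vanishings of Definition \ref{dLinearSteiner} (valid since $n\ge2$) squeeze $H^1(\cF_0\otimes\cF_0(-1))$ between $H^1(\cF_0(-1))^b = 0$ and $H^2(\cF_0(-2))^a = 0$; hence it vanishes, and a lift $\beta$, and thereby $\alpha$, exists.

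To obtain the required symmetry, set $\Psi := \tfrac{1}{2}(\Phi + \varepsilon D(\Phi))$: its $\cE$-component is $\tfrac{1}{2}(\eta+\varepsilon\cdot\varepsilon\eta) = \eta$, and $D(\Psi) = \varepsilon\Psi$ holds by construction (assuming $\mathrm{char}\,\field\neq2$, which fits with the rest of the paper). Using the $\alpha$-component of $\Psi$ to identify $\cG_0$ with $\cF_0$ and letting $\cF := \cF_0$, the sequence $(*)$ becomes $0\to\cF(-1)\xrightarrow{\varphi}\cF^\vee\to\cE\to0$, and the relation $D(\Psi) = \varepsilon\Psi$ translates precisely into $\varphi^\vee(-1) = \varepsilon\varphi$. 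The Steiner properties, the rank equality, and the cohomological equalities $h^n(\cF(-n-1)) = \quantum$ are inherited from $\cF_0$.

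The main obstacle is to ensure that the symmetrized morphism $\Psi$ still has isomorphic components $\alpha,\beta$, since the sum of two isomorphisms of complexes need not be an isomorphism. I would address this through a combination of two ingredients: first, a functoriality property of the construction in Corollary \ref{cHypersurfaceSpace} --- the Steiner bundles $\cG_0, \cF_0$ are canonically attached to $\cE$ via Beilinson-type cohomological data, so the orientation $\eta\colon\cE\xrightarrow\sim\cE^\vee((d-1)h)$ already forces an isomorphism $\cG_0\cong\cF_0$; and second, a genericity argument showing that the affine space of $D$-symmetric lifts of $\eta$ meets the open locus of iso lifts non-trivially, so that a symmetric iso $\Psi$ can indeed be chosen.
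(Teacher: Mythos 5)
Your architecture is the same as the paper's: start from the sequence of Corollary \ref{cHypersurfaceSpace}, dualize by $\sHom_{\p{n+1}}(\cdot,\cO_{\p{n+1}}(-1))$ using $\sExt^1_{\p{n+1}}(\cE,\cO_{\p{n+1}}(-d))\cong\cE^\vee$, lift $\eta$ to a morphism of the two resolutions, and extract an $\varepsilon$--symmetric $\varphi$. The existence of the lift is argued exactly as in the paper: the obstruction lives in $H^1\big(\cF_0\otimes\cG_0(-1)\big)$, which your twist of the Steiner resolution kills.

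The genuine gap is at the symmetrization step, and you have located it yourself but not closed it. Averaging $\Psi=\tfrac12(\Phi+\varepsilon D(\Phi))$ produces a $D$--symmetric lift of $\eta$, but a priori not an isomorphism of complexes, and neither of your two proposed remedies works as stated. The ``functoriality'' remark only re-establishes $\cG_0\cong\cF_0$, which you already have from the lift; it says nothing about the symmetrized map. The ``genericity'' argument rests on a false premise: the set of lifts of $\eta$ is not a positive-dimensional affine space over which one could move to a general point --- it is a single point. Indeed, the ambiguity in the lift is measured by $\Hom_{\p{n+1}}\big(\cF_0^\vee,\cG_0(-1)\big)=H^0\big(\cF_0\otimes\cG_0(-1)\big)$, and the very same twisted Steiner sequence you used for the $H^1$ squeezes this group between $H^0\big(\cG_0(-1)\big)^{\oplus b}=0$ and $H^1\big(\cG_0(-2)\big)^{\oplus a}=0$. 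This is precisely what the paper isolates as Proposition \ref{pUnique} (existence \emph{and uniqueness} of the lift, plus the fact that a lift of an isomorphism is an isomorphism, via lifting $\eta^{-1}$ and the Five lemma). Once uniqueness is in hand, your averaging becomes vacuous: $\varepsilon D(\Phi)$ is itself a lift of $\eta$, hence equals $\Phi$ on the nose, so $\Phi$ is already $D$--symmetric and no averaging is needed. Add the degree-zero vanishing and the uniqueness statement, and your proof closes; as written, the final step is not justified.
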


As an application we discuss the existence of a Steiner pfaffian representation of $X$, i.e. of a Steiner bundle $\cF$ on $\p{n+1}$ of even rank $r$ and a skew--symmetric morphism $\varphi\colon\cF(-1)\to\cF^\vee$ such that the form defining $X$ is the pfaffian $\pf(\varphi)$. 

In Proposition \ref{pPfaffianSurface} we show that each surface in $\p3$ has actually a Steiner pfaffian representation, extending well--known classical results for linear pfaffian representations (e.g. see \cite[Proposition 7.6]{Bea} and \cite[Corollary 1.2]{C--K--M}). 

Finally, in Section \ref{sExample}, we collect some further examples of hypersurfaces $X\subseteq\p{n+1}$ of degree $d$ with Steiner pfaffian representation for low values of $d$ and $n$.

\subsection{Acknowledgements}
Both the authors would like to thank J. Jelisiejew for some helpful discussions about the content of Examples \ref{eCubic4} and \ref{eCubicHigher}.

\section{Notation and first results}
\label{sGeneral}
Throughout the whole paper we will work over an algebraically closed field $\field$. For simplicity we will assume that the characteristic of $\field$ is not $2$. Further assumptions on the field will be explicitly specified in each statement when needed. 

The projective space of dimension $N$ over $\field$ will be denoted by $\p N$: $\cO_{\p N}(1)$ will denote the hyperplane line bundle. A projective scheme $P$ is a closed subscheme of some projective space over $\field$: $P$ is a {\em variety} if it is also integral. A {\em manifold} $P$ is a smooth variety: we often use the term $N$--fold for underlying that $P$ is a manifold of dimension $N$.  The structure sheaf of a projective scheme $P$ is denoted by $\cO_P$ and its Picard group by $\Pic(P)$. 

Let $P$ be a projective scheme. For each closed subscheme $Y\subseteq P$ the ideal sheaf $\cI_{Y\vert P}$ of $Y$ in $P$ fits into the exact sequence
\begin{equation}
\label{seqStandard}
0\longrightarrow \cI_{Y\vert P}\longrightarrow \cO_P\longrightarrow \cO_Y\longrightarrow 0.
\end{equation}

If $\cA$ is a coherent sheaf on a projective scheme $P$ we set $h^i\big(\cA\big):=\dim H^i\big(\cA\big)$. If $P$ is  endowed with a globally generated line bundle $\cO_P(H)$, then we have an induced morphism $\phi_H\colon P\to\p M$ where $M+1=h^0\big(\cO_P(H)\big)$ and $\phi_H$ is finite if and only is $\cO_P(H)$ is also ample.

For each $i\ge0$  we set $H^i_*\big(\cA\big):=\bigoplus_{t\in\bZ} H^i\big(\cA(tH)\big)$.
If $S$ is the symmetric $\field$--algebra over $H^0\big(\cO_P(H)\big)$, then  $S\cong \field[x_0,\dots,x_M]$  and $H^i_*\big(\cA\big)$ is naturally an $S$--module. The morphism $\phi_H$ induces a morphism $S\to H^0_*\big(\cO_P\big)$ of $\field$--algebras and we denote by $S[P]$ its image.

The non--zero coherent sheaf $\cA$ on $P$ is called {\em $m$--regular (in the sense of Ca\-stel\-nuo\-vo--Mumford)} if $h^i\big(\cA((m-i)H)\big)=0$ for $i\ge1$ and the {\em regularity} $\reg(\cA)$ of $\cA$ is defined as the minimum integer $m$ such that $\cA$ is $m$--regular. 

\begin{proposition}
\label{pNatural}
Let $P$ be a projective scheme of dimension $n\ge1$ endowed with a globally generated line bundle $\cO_P(H)$.

If $\cA$ is a coherent sheaf on $P$ and there is $m\le n-1$ (resp $m\ge 1$) such that $h^i\big(\cA(-iH)\big)=0$ for each $i\le m$ (resp $i\ge m$), then the following assertions hold.
\begin{enumerate}
\item $h^i\big(\cA(-tH)\big)=0$ for each $t\ge i$ and $i\le m$ (resp. $t\le i$ and $i\ge m$).
\item $h^i\big(\cO_Y\otimes\cA(-iH)\big)=0$ for each $i\le m-1$ (resp. $i\ge m$) and general $Y\in\vert H\vert$.
\end{enumerate}
\end{proposition}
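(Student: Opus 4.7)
The plan is to proceed by induction on $n=\dim P$, establishing (1) and (2) simultaneously in both parallel cases. The main tool is that, for a general $Y\in|H|$, the section defining $Y$ is a non--zero--divisor on $\cA$ (since $\cO_P(H)$ is globally generated, a general section avoids the finitely many associated primes of $\cA$), so one disposes of the short exact sequence
\begin{equation*}
0\longrightarrow \cA(-H)\longrightarrow \cA\longrightarrow \cO_Y\otimes\cA\longrightarrow 0,
\end{equation*}
and analogously after twisting by $\cO_P(-tH)$ for any $t\in\bZ$.

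First I would prove (2). Twisting the sequence above by $\cO_P(-iH)$ and extracting the relevant piece of the long exact sequence produces the three--term sequence
\begin{equation*}
H^i\big(\cA(-iH)\big)\to H^i\big(\cO_Y\otimes\cA(-iH)\big)\to H^{i+1}\big(\cA(-(i+1)H)\big).
\end{equation*}
In the first case both outer groups vanish by hypothesis whenever $i\le m-1$ (since then $i+1\le m$ as well), forcing the middle one to vanish; in the second case both vanish for $i\ge m$. This yields (2) in both cases.

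Once (2) is established, the coherent sheaf $\cO_Y\otimes\cA$ on $Y$ satisfies the hypothesis of the proposition on $Y$ with $m$ replaced by $m-1$ in the first case (valid since $\dim Y-1=n-2\ge m-1$), or with the same $m\ge 1$ in the second. Since $\dim Y=n-1$, by the inductive hypothesis (1) holds for $\cO_Y\otimes\cA$ on $Y$. I would then bootstrap to (1) for $\cA$ on $P$ by a secondary induction on $|t-i|$, the base case $t=i$ being the hypothesis. In the first case, the long exact sequence
\begin{equation*}
H^{i-1}\big(\cO_Y\otimes\cA(-tH)\big)\to H^i\big(\cA(-(t+1)H)\big)\to H^i\big(\cA(-tH)\big)
\end{equation*}
gives the step $t\to t+1$: the right--hand group vanishes by the secondary induction and the left--hand one by (1) on $Y$ (with the convention $H^{-1}=0$ covering the $i=0$ edge). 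The second case is handled symmetrically, performing the inductive step $s\to s-1$ via the analogous LES with $H^i\big(\cO_Y\otimes\cA(-(s-1)H)\big)$ on the right, which vanishes again by (1) on $Y$.

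The base case $n=1$ is immediate: in the first case only $i=0$ matters and (2) is vacuous, while in the second case the relevant cohomology on $Y$ (a dimension--zero scheme) vanishes automatically in positive degree. The main obstacle is the bookkeeping: coordinating the joint induction so that (2) on $P$ feeds cleanly into (1) on $Y$ at each step, and keeping the index ranges $i\le m-1$, $i\le m$, $i\ge m$ aligned across applications; all the work is then contained in the three--term LES extractions above.
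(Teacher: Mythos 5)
Your argument is correct: a general $Y\in|H|$ avoids the finitely many associated points of $\cA$ (here one uses that $\field$ is infinite and $\cO_P(H)$ is globally generated), the three--term pieces of the long exact sequence give (2) directly from the hypothesis, and the double induction (on $\dim P$, with the $n=1$ and $i=0$ edge cases handled as you indicate, and on $|t-i|$) yields (1) with the index ranges matching the statement. The paper gives no proof of its own, deferring to \cite[Proposition 2.1]{An--Ca1}; your proof is the standard hyperplane--section argument used there, so the approaches are essentially the same.
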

\begin{proof}
See \cite[Proposition 2.1]{An--Ca1}.
\end{proof}

If $\cA$ and $\cB$ are coherent sheaves on an $N$--fold $P$, then the Serre duality holds
\begin{equation}
\label{Serre}
\Ext_P^i\big(\cA,\cB\otimes\omega_P\big)\cong \Ext_P^{N-i}\big(\cB,\cA\big)^\vee
\end{equation}
(see \cite[Proposition 7.4]{Ha3}). 

If $P$ is an $N$--fold, we denote by  $\CH^r(P)$ the group of cycles on $P$ of codimension $r$ modulo rational equivalence: in particular $\CH^1(P)\cong\Pic(P)$ (see \cite[Proposition 1.30]{Ei--Ha2}) and we set $\CH(P):=\bigoplus_{r\ge0}\CH^r(P)$. The Chern classes of a coherent sheaf $\cA$ on $P$ are elements in $\CH(P)$: in particular, when $\cA$ is locally free $c_1(\cA)$ is identified with $\det(\cA)$ via the isomorphism $\CH^1(P)\cong\Pic(P)$.

If $X$ is either a smooth curve or a smooth surface, Hirzebruch--Riemann--Roch formulas for a coherent sheaf $\cA$ are 
\begin{gather}
  \label{RRcurve}
\chi(\cA)=\rk(\cA)\chi(\cO_P)+\deg(c_1(\cA)),\\
\label{RRsurface}
\chi(\cA)=\rk(\cA)\chi(\cO_P)+{\frac12}c_1(\cA)^2-{\frac12}K_Pc_1(\cA)-c_2(\cA),
 \end{gather}
(see \cite[Theorem 14.4]{Ei--Ha2}). 

Let $\cA$ be a rank two vector bundle on an $N$--fold $P$ and let $s\in H^0\big(\cA\big)$. In general its zero--locus
$(s)_0\subseteq P$ is either empty or its codimension is at most
two. Thus, we can always write $(s)_0=Y\cup Z$
where $Z$ has codimension two (or it is empty) and $Y$ has pure codimension
one (or it is empty). 
If $Y=\emptyset$, then $Z$ is locally complete intersection inside $P$, because $\rk(\cA)=2$. In particular, it has no embedded components.

The Serre correspondence reverts the above construction as follows.

\begin{theorem}
  \label{tSerre}
Let $P$ be an $N$--fold with $n\ge2$ and $Z\subseteq P$ a local complete intersection subscheme of codimension two.

If $\det(\cN_{Z\vert P})\cong\cO_Z\otimes\mathcal L$ for some $\mathcal L\in\Pic(P)$ such that $h^2\big(\mathcal L^\vee\big)=0$, then there exists a rank two vector bundle $\cA$ on $P$ satisfying the following properties.
  \begin{enumerate}
  \item $\det(\cA)\cong\mathcal L$.
  \item $\cA$ has a section $s$ such that $Z$ coincides with its zero locus $(s)_0$.
  \end{enumerate}
  Moreover, if $H^1\big({\mathcal L}^\vee\big)= 0$, the above two conditions  determine $\cA$ up to isomorphism.
\end{theorem}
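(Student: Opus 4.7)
The plan is to apply the classical Serre construction: I build $\cA$ as a non--trivial extension of $\cI_{Z\vert P}\otimes\mathcal L$ by $\cO_P$, chosen so that the resulting sheaf is locally free, and recover $Z$ as the zero locus of the section induced by the inclusion $\cO_P\hookrightarrow\cA$.

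First I would analyze the relevant local Ext--sheaves. Applying $\sHom(-,\cO_P)$ to the sequence \eqref{seqStandard} for $Z$ and using that $Z$ is a local complete intersection of pure codimension two, the fundamental local isomorphism yields $\sExt^2(\cO_Z,\cO_P)\cong\det\cN_{Z\vert P}$, while $\sExt^q(\cO_Z,\cO_P)=0$ for $q\ne 2$. Hence $\sHom(\cI_{Z\vert P},\cO_P)\cong\cO_P$ and $\sExt^1(\cI_{Z\vert P},\cO_P)\cong\det\cN_{Z\vert P}$, so tensoring by $\mathcal L^\vee$ and invoking the hypothesis on $\det\cN_{Z\vert P}$ gives $\sExt^1(\cI_{Z\vert P}\otimes\mathcal L,\cO_P)\cong\cO_Z$. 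The five--term sequence of the local--to--global spectral sequence then reads
$$0\to H^1(\mathcal L^\vee)\to\Ext^1_P(\cI_{Z\vert P}\otimes\mathcal L,\cO_P)\to H^0(\cO_Z)\to H^2(\mathcal L^\vee),$$
and since $h^2(\mathcal L^\vee)=0$, the unit section $1\in H^0(\cO_Z)$ lifts to an extension class $\xi$ defining
$$0\to\cO_P\to\cA\to\cI_{Z\vert P}\otimes\mathcal L\to 0.$$

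The main technical point, and the place where I expect to have to be most careful, is the verification that $\cA$ is locally free. The key observation is that the image of $\xi$ in $H^0(\cO_Z)$ is a unit: at each $p\in Z$ this forces the stalk of the extension to coincide, up to isomorphism, with the Koszul extension attached to a regular sequence $f_1,f_2\in\cO_{P,p}$ generating $\cI_{Z\vert P,p}$, namely $0\to\cO_{P,p}\to\cO_{P,p}^{\oplus 2}\to\cI_{Z\vert P,p}\to 0$ with first map $(-f_2,f_1)$. Thus $\cA$ is free of rank two near each point of $Z$; outside $Z$ both $\cO_P$ and $\cI_{Z\vert P}\otimes\mathcal L\cong\mathcal L$ are line bundles, so $\cA$ is locally free of rank two there as well. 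The identity $\det(\cA)\cong\mathcal L$ follows by taking determinants on the open locus $P\setminus Z$ and extending across the codimension--two set $Z$ by reflexivity, while the section $s$ and the equality $(s)_0=Z$ are built into the construction.

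For the uniqueness assertion, the extra hypothesis $H^1(\mathcal L^\vee)=0$ makes the map $\Ext^1_P(\cI_{Z\vert P}\otimes\mathcal L,\cO_P)\to H^0(\cO_Z)$ injective, so the locally free extensions above correspond bijectively to the nowhere vanishing sections of $\cO_Z$ lying in the image. Two such classes differ by rescaling $\xi$, which induces an isomorphism between the middle terms of the corresponding extensions; hence $\cA$ is determined by $Z$ and $\mathcal L$ up to isomorphism.
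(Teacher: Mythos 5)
The paper offers no argument of its own for this statement (it just cites Arrondo's home--made Hartshorne--Serre correspondence), so the benchmark is the classical Serre construction, and your existence half is exactly that and is correct: the identifications $\sHom\big(\cI_{Z\vert P},\cO_P\big)\cong\cO_P$ and $\sExt^1\big(\cI_{Z\vert P}\otimes\mathcal L,\cO_P\big)\cong\cO_Z$, the five--term sequence, the lifting of $1\in H^0\big(\cO_Z\big)$ using $h^2\big(\mathcal L^\vee\big)=0$, and the local criterion for freeness (your Koszul--stalk discussion is the standard fact that the middle term is free at $p\in Z$ precisely because the image of the class is a unit in $\cO_{Z,p}$).

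The uniqueness half, however, has a genuine gap at the sentence ``two such classes differ by rescaling $\xi$''. Under $H^1\big(\mathcal L^\vee\big)=0$ the locally free extensions are classified by the units of $H^0\big(\cO_Z\big)$, and two units need not differ by a constant unless $Z$ is connected and reduced; multiplication by a nonconstant unit of $H^0\big(\cO_Z\big)$ is not an operation on $\Ext^1_P\big(\cI_{Z\vert P}\otimes\mathcal L,\cO_P\big)$ that you can convert into an isomorphism of middle terms. In fact the conclusion you are aiming at can fail without a connectedness hypothesis: take $Z=L_1\sqcup L_2$ two skew lines in $\p3$ and $\mathcal L=\cO_{\p3}(2)$, so that $h^1\big(\mathcal L^\vee\big)=h^2\big(\mathcal L^\vee\big)=0$; for every symplectic form $\omega$ on $\field^4$ with $L_2=L_1^{\perp\omega}$ (a pencil of such forms, pairwise non--proportional) the twisted null--correlation bundle $N_\omega(1)$ has determinant $\cO_{\p3}(2)$ and a section vanishing scheme--theoretically on $Z$, and these bundles are pairwise non--isomorphic. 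So what really determines $(\cA,s)$ is the unit in $H^0\big(\cO_Z\big)$, i.e.\ the chosen isomorphism $\det(\cN_{Z\vert P})\cong\cO_Z\otimes\mathcal L$ up to a constant. Your argument (and the uniqueness assertion as you use it) is fine once one assumes every unit of $H^0\big(\cO_Z\big)$ is constant --- e.g.\ $Z$ connected and reduced, which is the case in all applications made in this paper --- and once you add the small but necessary remark that an arbitrary pair $(\cA',s')$ satisfying (1)--(2) arises from its Koszul extension $0\to\cO_P\to\cA'\to\cI_{Z\vert P}\otimes\mathcal L\to0$, whose class maps to a unit of $H^0\big(\cO_Z\big)$, so that it can be compared with your class $\xi$ in the first place.
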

\begin{proof}
See \cite{Ar}.
\end{proof}

In particular the Koszul complex of the section $s\in H^0\big(\cA\big)$
\begin{equation}
  \label{seqSerre}
  0\longrightarrow \cO_P\longrightarrow \cA\longrightarrow \cI_{Z\vert P}\otimes\det(\cA)\longrightarrow 0
\end{equation}
is exact.

\section{Linear sheaves on projective schemes}
\label{sLinear}
In this section we will deal with some properties of linear sheaves on irreducible projective schemes.

In the following proposition we  show that the finite set of vanishing in the cohomology of $\cF$ in the definition give informations on the whole cohomology of $\cF$.

\begin{proposition}
\label{pCharacterizationLinear}
Let $P$ be an irreducible projective scheme of dimension $N\ge1$ endowed with an ample and globally generated line bundle $\cO_P(H)$.

If $\cF$ is a coherent sheaf on $P$, then the following assertions are equivalent.
\begin{enumerate}
\item $\cF$ is a $H$--linear sheaf.
\item $\cF$ is generically non--zero and the following properties hold:
\begin{itemize}
\item $h^0\big(\cF(-(t+1)H)\big)=h^N\big(\cF((t-N)H)\big)=0$ if $t\ge0$;
\item $h^i\big(\cF(-(i+t+1)H)\big)=h^{j}\big(\cF((t-j)H)\big)=0$ if $1\le i\le N-2$, $2\le j\le N-1$ and $t\ge0$.
\end{itemize}
\item For each  finite morphism $p\colon P\to\p N$ with $\cO_P(H)\cong p^*\cO_{\p N}(1)$, then $p_*\cF$ is a linear sheaf on $\p N$ with respect to $\cO_{\p N}(1)$.
\item There is a finite morphism $p\colon P\to\p N$ with $\cO_P(H)\cong p^*\cO_{\p N}(1)$ such that $p_*\cF$ is a linear sheaf on $\p N$ with respect to $\cO_{\p N}(1)$.
\end{enumerate}
\end{proposition}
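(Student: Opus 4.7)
The plan is as follows. The implication (2) $\Rightarrow$ (1) is immediate by specializing to $t=0$, and (1) $\Rightarrow$ (2) reduces to two applications of Proposition \ref{pNatural}. First, applied to $\cF(-H)$ with $m=N-2$ using the vanishings $h^i\big(\cF(-(i+1)H)\big)=0$ for $0\le i\le N-2$ coming from (1), one obtains $h^i\big(\cF(-(i+t+1)H)\big)=0$ for every $t\ge 0$ and $0\le i\le N-2$. Second, applied to $\cF$ itself in the ``resp.'' case with $m=2$, using $h^j\big(\cF(-jH)\big)=0$ for $2\le j\le N$, one obtains $h^j\big(\cF((t-j)H)\big)=0$ for every $t\ge 0$ and $j\ge 2$. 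Together these cover precisely the vanishings listed in (2).

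The key observation behind the equivalences (1) $\Leftrightarrow$ (3) $\Leftrightarrow$ (4) is cohomological. For any finite morphism $p\colon P\to \p N$ with $p^*\cO_{\p N}(1)\cong \cO_P(H)$, the projection formula gives $p_*\big(\cF(-tH)\big)\cong (p_*\cF)(-t)$; since $p$ is finite, hence affine, $R^ip_*=0$ for $i\ge 1$, so the Leray spectral sequence degenerates into natural isomorphisms
\[
H^i\big(P,\cF(-tH)\big)\cong H^i\big(\p N,(p_*\cF)(-t)\big)
\]
for every $i$ and every $t$. Moreover, $P$ being irreducible of dimension $N$ and $p$ being finite force $p(P)$ to be closed and irreducible of dimension $N$, hence equal to $\p N$; in particular the fiber over the generic point of $\p N$ consists only of the generic point of $P$, so $p_*\cF$ is generically non-zero iff $\cF$ is. Consequently, $\cF$ is $H$-linear on $P$ iff $p_*\cF$ is $\cO_{\p N}(1)$-linear on $\p N$. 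This directly yields (1) $\Leftrightarrow$ (3), and once we know at least one such $p$ exists also (1) $\Leftrightarrow$ (4).

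The final step is to exhibit one such $p$, thereby upgrading (3) into the existential statement (4). Since $\cO_P(H)$ is ample and globally generated, the induced morphism $\phi_H\colon P\to \p M$ is finite (as recalled in Section \ref{sGeneral}); composition with a general linear projection from a linear subspace of dimension $M-N-1$ disjoint from $\phi_H(P)$ produces a finite morphism $p\colon P\to \p N$ satisfying $p^*\cO_{\p N}(1)\cong\cO_P(H)$.

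The hard part is essentially non-existent: the proof is a routine packaging of Proposition \ref{pNatural} with the Leray-plus-projection-formula identification. The only point demanding care is ensuring that a general linear projection produces a genuinely finite, not merely quasi-finite, morphism onto $\p N$, which follows from the standard dimension count that a general linear subspace of codimension $N+1$ in $\p M$ misses any fixed $N$-dimensional projective subvariety.
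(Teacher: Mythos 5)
Your proposal is correct and follows essentially the same route as the paper's proof, which simply invokes Proposition \ref{pNatural} together with push-forward along a finite morphism induced by $\cO_P(H)$ and the observation that the generic point of $P$ maps to the generic point of $\p N$ (the paper phrases this as being analogous to \cite[Theorem 1.4]{An--Ca1}). The only adjustment needed is the edge case $N=1$, where your choices $m=N-2$ and $m=2$ in Proposition \ref{pNatural} yield nothing: there one takes $m=0$ and $m=1$ instead, which immediately gives the two vanishings required in assertion (2).
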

\begin{proof}
Taking into account of Propositions \ref{pNatural}, \ref{pLinearSpace}, the proof is analogous to the one of the characterization of ordinary instanton sheaves in \cite[Theorem 1.4]{An--Ca1} because the image of the generic point of $P$  via the dominant morphism $p$ is the generic point of $\p n$.
\end{proof}

We show that linear sheaves on the projective spaces are exactly the sheaves that can be obtained as cohomologies of linear monads. Recall that the rank of a sheaf on an integral scheme is the rank of its stalk at the generic point.

\begin{proposition}
\label{pLinearSpace}
Let $N\ge1$.

A coherent sheaf $\cF$ on $\p N$ is linear with respect to $\cO_{\p N}(1)$ if and only if it is both generically non--zero and the cohomology of a monad $\cM^\bullet$ of the form
\begin{equation}
\label{Monad}
0\longrightarrow\cO_{\p N}(-1)^{\oplus m'}\longrightarrow\cO_{\p N}^{\oplus m}\longrightarrow\cO_{\p N}(1)^{\oplus m''}\longrightarrow0.
\end{equation}

Moreover, if this occurs
\begin{equation}
\begin{gathered}
\label{BettiMonad}
m'=h^{N-1}\big(\cF(-N)\big),\qquad m''=h^{1}\big(\cF(-1)\big),\\
m=\chi(\cF)+(N+1)h^{1}\big(\cF(-1)\big)=\rk(\cF)+h^{1}\big(\cF(-1)\big)+h^{N-1}\big(\cF(-N)\big).
\end{gathered}
\end{equation}
\end{proposition}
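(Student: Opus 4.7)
The plan is to establish the two implications separately, using long exact sequences throughout.

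For the sufficiency, suppose $\cF$ is the cohomology of a monad of the stated form. I would split the monad into two short exact sequences
$$
0\to\cO_{\p N}(-1)^{\oplus m'}\to\cO_{\p N}^{\oplus m}\to\cK\to 0,\qquad 0\to\cF\to\cK\to\cO_{\p N}(1)^{\oplus m''}\to 0,
$$
where $\cK$ is the cokernel of the leftmost arrow of the monad. Twisting by $\cO_{\p N}(-t)$ for the values of $t$ prescribed by Definition~\ref{dLinearSteiner} and using that line bundles on $\p N$ have no intermediate cohomology (while $h^0(\cO_{\p N}(k))\neq 0$ iff $k\geq 0$ and $h^N(\cO_{\p N}(k))\neq 0$ iff $k\leq -N-1$), I would verify all the required vanishings by chasing the resulting long exact sequences. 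The same chases yield $h^1(\cF(-1))=m''$ and $h^{N-1}(\cF(-N))=m'$; the expressions for $m$ then follow from additivity of the Euler characteristic (giving $m=\chi(\cF)+(N+1)m''$ via $\chi(\cO_{\p N}(-1))=0$ and $\chi(\cO_{\p N}(1))=N+1$) and of rank on the complex (giving $m=\rk(\cF)+m'+m''$).

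For the necessity, set $V':=H^{N-1}(\cF(-N))$ and $V'':=H^1(\cF(-1))$. The identity in $\Hom(V'',V'')\cong V''\otimes H^1(\cF(-1))\cong\Ext^1(V''\otimes\cO_{\p N}(1),\cF)$ yields a universal extension
$$
0\to\cF\to\cF_1\to V''\otimes\cO_{\p N}(1)\to 0
$$
whose connecting homomorphism $V''\to H^1(\cF(-1))=V''$ is the identity, forcing $h^1(\cF_1(-1))=0$. Using Serre duality $\Ext^1(\cF_1,\cO_{\p N}(-1))^\vee\cong H^{N-1}(\cF_1(-N))$, a parallel construction produces a universal extension
$$
0\to V'\otimes\cO_{\p N}(-1)\to\cK\to\cF_1\to 0
$$
with $h^{N-1}(\cK(-N))=0$. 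Splicing these two sequences gives the sought-after monad, provided $\cK\cong\cO_{\p N}^{\oplus m}$ for some $m$.

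To identify $\cK$ I would invoke Horrocks' splitting criterion: a vector bundle on $\p N$ splits as a direct sum of line bundles iff $H^i_*(\cdot)=0$ for $1\leq i\leq N-1$. Linearity of $\cF$ combined with Proposition~\ref{pNatural} forces $H^i(\cF(k))=0$ for all $k\in\bZ$ when $2\leq i\leq N-2$, and the same holds for $\cK$ via the two extensions (line bundles contribute nothing to intermediate cohomology). For $i=1$ and $i=N-1$, the choices of extension classes kill cohomology at the twists $-1$ and $-N$ respectively, and the vanishings at all other twists should follow once one shows that $H^1_*(\cF)$ is generated in degree $-1$ as an $S$-module (and dually for $H^{N-1}_*$). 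Finally, the vanishings $h^0(\cK(-1))=0$ and $h^N(\cK(-N))=0$ --- inherited from the linearity of $\cF$ and from the two extensions --- rule out any summand $\cO_{\p N}(k)$ with $k\neq 0$, giving $\cK\cong\cO_{\p N}^{\oplus m}$. The Betti formulas then follow exactly as in the sufficiency direction.

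The main obstacle is precisely the generation statement for $H^1_*(\cF)$ in degree $-1$ (and dually of $H^{N-1}_*(\cF)$ in degree $-N$): linearity as defined prescribes vanishings only at finitely many twists, so surjectivity of the multiplication $V''\otimes H^0(\cO_{\p N}(k+1))\to H^1(\cF(k))$ for $k\geq 0$ is not immediate. I would handle this through the Beilinson spectral sequence $E_1^{p,q}=\Omega^{-p}(-p)\otimes H^q(\cF(p))\Rightarrow\cF$, whose only potentially nonzero entries for linear $\cF$ occupy the positions $(p,q)\in\{(0,0),(0,1),(-1,1),(-N,N-1)\}$; demanding that the off-diagonal contributions die in the abutment then forces the required surjectivities.
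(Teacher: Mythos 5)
Your ``if'' direction coincides with the paper's (split the monad into the two short exact sequences and chase cohomology), so the real comparison concerns the converse. There the paper takes a different and shorter route: it restricts to a general hyperplane to propagate the defining vanishings to all twists (this is Proposition~\ref{pNatural}) and then reads the monad off Beilinson's theorem directly, following Jardim's Theorem~3. Your route --- two universal extensions killing $H^1\big(\cF(-1)\big)$ and $H^{N-1}\big(\cdot(-N)\big)$, then a Horrocks-type splitting of the middle term $\cK$ --- is genuinely different and, in outline, viable; it has the merit of making the Betti numbers $m',m''$ appear tautologically and of isolating exactly where the difficulty lives (the generation of $H^1_*\big(\cF\big)$ in degree $-1$ and its dual), which you correctly flag. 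But two steps need repair. First, the proposition is about arbitrary coherent sheaves, so $\cK$ is not a priori locally free (it is locally free precisely when $\cF$ has local projective dimension at most one, which is part of what is being proved); Horrocks' criterion as you quote it therefore does not apply. You must instead use the graded-module version: $\Gamma_*\big(\cK\big)$ is saturated, is finitely generated because $h^0\big(\cK(t)\big)=0$ for $t\le -1$ (this does follow from your construction together with Proposition~\ref{pNatural}), and has vanishing local cohomology in degrees $1$ through $N$, hence is free by Auslander--Buchsbaum; only then do the boundary vanishings pin down all twists to $0$. Second, the Beilinson spectral sequence of $\cF$ alone only forces surjectivity of $H^1\big(\cF(-1)\big)\otimes H^0\big(\cO_{\p N}(1)\big)\to H^1\big(\cF\big)$, i.e.\ the case $s=0$: the entry $E_1^{0,1}$ can only be killed by $d_1$ from $E_1^{-1,1}$. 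To get surjectivity of $H^1\big(\cF(s-1)\big)\otimes H^0\big(\cO_{\p N}(1)\big)\to H^1\big(\cF(s)\big)$ for all $s\ge 1$ (and the dual injectivity statements governing $H^{N-1}$ for $s\le -1$) you must run the spectral sequence on every twist $\cF(s)$, each time using the extended vanishing table to check that no higher differential can intervene; note that for $\cF$ itself the corner $E_1^{-N,N-1}$ \emph{can} be hit by $d_{N-1}$ and $d_N$ landing in $E^{-1,1}$ and $E^{0,0}$, so the dual statement really does require the negative twists, where those entries vanish. With these two repairs your argument closes; as it stands, it is an honest sketch of a different proof rather than a complete one.
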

\begin{proof}
Let $N=1$. In this case $\cF$  is linear with respect to $\cO_{\p1}(1)$ if and only if
$$
h^0\big(\cF(-1)\big)=h^1\big(\cF(-1)\big)=0.
$$
In particular it is necessarily torsion--free, hence $\cF\cong\cO_{\p1}^{\oplus\rk(\cF)}$ and the statement follows trivially. 

Let $N\ge2$. Assume that $\cF$ is the cohomology of monad \eqref{Monad}.  Splitting it into the short exact sequences
\begin{equation}
\label{Display}
\begin{gathered}
0\longrightarrow\cU\longrightarrow\cO_{\p N}^{\oplus m}\longrightarrow\cO_{\p N}(1)^{\oplus m''}\longrightarrow0,\\
0\longrightarrow\cO_{\p N}(-1)^{\oplus m'}\longrightarrow\cU\longrightarrow\cF\longrightarrow0,
\end{gathered}
\end{equation}
as in the proof of  \cite[Proposition 2]{Ja} one easily deduces that $\cF$ is a linear sheaf with respect to $\cO_{\p N}(1)$ and obtains equalities \eqref{BettiMonad}.

Conversely, let $\cF$ be a linear sheaf with respect to $\cO_{\p N}(1)$. If we take a hyperplane $\Sigma\subseteq\p N$ not containing any associated point of $\cF$ we can write the exact sequence
$$
0\longrightarrow\cF(-1)\longrightarrow\cF\longrightarrow\cO_\Sigma\otimes\cF\longrightarrow 0.
$$
From this point on the argument coincides with the one in the proof of  \cite[Theorem 3]{Ja}.
\end{proof}

\begin{corollary}
\label{cCharacteristicLinear}
Let $P$ be an irreducible projective scheme of dimension $N\ge1$ endowed with an ample and globally generated line bundle $\cO_P(H)$.

If $\cF$ is $H$--linear on $P$, then 
\begin{align*}
\chi(\cF(tH))&=(\chi(\cF)+(N+1)h^1\big(\cF(-H)\big)){{N+t}\choose N}\\
&-h^1\big(\cF(-H)\big){{N+t+1}\choose N}-h^{N-1}\big(\cF(-NH)\big){{N+t-1}\choose N}.
\end{align*}
\end{corollary}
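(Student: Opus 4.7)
The plan is to reduce the computation to the case $P=\p N$ by pushing $\cF$ forward along a finite morphism, and then to read off $\chi\big(\cF(tH)\big)$ directly from the monadic description provided by Proposition \ref{pLinearSpace}.

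First I would invoke the equivalence $(1)\Leftrightarrow(3)$ of Proposition \ref{pCharacterizationLinear} to produce a finite morphism $p\colon P\to\p N$ with $\cO_P(H)\cong p^*\cO_{\p N}(1)$ such that $p_*\cF$ is $\cO_{\p N}(1)$--linear on $\p N$. Since $p$ is finite it is affine, so $R^ip_*\cF=0$ for $i\ge1$; combined with the projection formula, this yields $H^i\big(\cF(tH)\big)\cong H^i\big(p_*\cF(t)\big)$ for every pair $(i,t)$. In particular $\chi\big(\cF(tH)\big)=\chi\big(p_*\cF(t)\big)$, and the three invariants appearing in the statement, namely $\chi(\cF)$, $h^1\big(\cF(-H)\big)$ and $h^{N-1}\big(\cF(-NH)\big)$, agree with the analogous invariants of $p_*\cF$ on $\p N$.

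Next I would apply Proposition \ref{pLinearSpace} to $p_*\cF$, obtaining a monad of the form \eqref{Monad} whose Betti numbers $m'$, $m$, $m''$ are determined by \eqref{BettiMonad}. Splitting the monad into the two canonical short exact sequences as in \eqref{Display}, twisting by $\cO_{\p N}(t)$ and using additivity of the Euler characteristic gives
\[
\chi\big(\cF(tH)\big)=m\chi\big(\cO_{\p N}(t)\big)-m''\chi\big(\cO_{\p N}(t+1)\big)-m'\chi\big(\cO_{\p N}(t-1)\big).
\]
Substituting $\chi\big(\cO_{\p N}(k)\big)=\binom{N+k}{N}$ together with the explicit values $m=\chi(\cF)+(N+1)h^1\big(\cF(-H)\big)$, $m''=h^1\big(\cF(-H)\big)$ and $m'=h^{N-1}\big(\cF(-NH)\big)$ yields exactly the displayed formula. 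There is no real obstacle here: the whole argument is a transfer of the $\p N$--case through $p$, and the only point to verify is that the finite pushforward preserves precisely the two cohomological invariants that appear as coefficients, which is automatic from finiteness.
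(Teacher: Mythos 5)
Your proposal is correct and takes essentially the same route as the paper: one pushes $\cF$ forward along a finite morphism $p\colon P\to\p N$ induced by $\cO_P(H)$ (where $\chi$ and the cohomological invariants are preserved, by the vanishing of higher direct images and the projection formula), and then computes $\chi\big((p_*\cF)(t)\big)$ from the two short exact sequences \eqref{Display} obtained by splitting the monad of Proposition \ref{pLinearSpace}, substituting the Betti numbers \eqref{BettiMonad}. The only cosmetic difference is that the paper cites the equivalence with condition (4) of Proposition \ref{pCharacterizationLinear} (existence of one finite projection) rather than (3), but this does not affect the argument.
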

\begin{proof}
If $p\colon P\to\p N$ is any finite projection such that $\cO_P(H)\cong p^*\cO_{\p N}(1)$, then $\chi(\cF(tH))=\chi((p_*\cF)(t))$ thanks to \cite[Corollary III.11.2 and Exercises III.8.1, III.8.3]{Ha2} and $p_*\cF$ is linear on $\p n$. 

The first equality in the statement follows by computing $\chi((p_*\cF)(t))$ from the exact sequences \eqref{Display}, taking into account the equalities \eqref{BettiMonad}.
\end{proof}

\begin{remark}
\label{rDual}
Let $\cF$ is a locally free sheaf on an $N$--fold $P$. It is an immediate consequence of the definition and of equality \eqref{Serre} that $\cF$ is $H$--linear on $P$ if and only if the same is true for its Ulrich dual $\cF^{U,H}:=\cF^\vee((N+1)H+K_P)$.

In particular, if $P\cong\p N$ and $\cO_P(H)\cong\cO_{\p N}(1)$, then a locally free sheaf $\cF$ is the cohomology of monad \eqref{Monad} if and only if $\cF^\vee$ is the cohomology of the dual monad. This concludes the remark.
\end{remark}

Trivially each extension of $H$--linear sheaves on $P$ is again a $H$--linear sheaf: in particular direct sums of $H$--linear sheaves are still $H$--linear. Below we give some more interesting examples.

\begin{example}
\label{eEasy}
Every $H$--instanton and $H$--Ulrich sheaf on $P$ is automatically a $H$--linear sheaf. E.g. $\cO_{\p N}$ is linear with respect to $\cO_{\p N}(1)$.

Consider monad \eqref{Monad} on $\p N$. If $m'\ne m''$ and either $m\ge m'+m''+N$ or $m\ge\max\{\ m'+m'', 2m'+N-1\ \}$, \cite[Main Theorem]{Flo} implies that the cohomology of the monad is a $h$--linear sheaf on $\p N$ which is not $h$--instanton. E.g. $\Omega_{\p N}^1(1)$ is linear (but not an instanton) with respect to $\cO_{\p N}(1)$.
\end{example}

In the following proposition we bound the regularity of linear sheaves from above. 

\begin{proposition}
\label{pRegularity}
Let $P$ be an irreducible projective scheme of dimension $N\ge1$ endowed with a very ample line bundle $\cO_P(H)$.

If $\cF$ is a $H$--linear sheaf on $P$, then $\reg(\cF)\le h^1\big(\cF(-H)\big)$.
\end{proposition}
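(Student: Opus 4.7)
The plan is to reduce to $P=\p N$ with $\cO_P(H)\cong\cO_{\p N}(1)$ through Proposition \ref{pCharacterizationLinear}: pushforward along a finite projection associated to $H$ sends $\cF$ to an $\cO_{\p N}(1)$-linear sheaf on $\p N$ and preserves every $h^i$ of every twist, hence both the regularity and the integer $k:=h^1(\cF(-H))$. The claim $\reg(\cF)\le k$ amounts to the vanishings $h^i(\cF((k-i)H))=0$ for all $i\ge 1$, which I split according to the size of $i$.

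For $i\ge 2$ the argument is quick. The linearity axioms give $h^j(\cF(-jH))=0$ for every $j$ with $2\le j\le N$, so the ``$i\ge m$'' variant of Proposition \ref{pNatural}(1), applied with $m=2$, yields $h^j(\cF(sH))=0$ whenever $s\ge -j$. Since $k\ge 0$, the twist $s=k-j$ lies in this range for all $j\ge 2$, closing the case $i\ge 2$.

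The remaining task is $h^1(\cF((k-1)H))=0$, which I would attack by induction on $N$. The base case $N=1$ is trivial, because the linearity condition $h^N(\cF(-NH))=0$ becomes $h^1(\cF(-H))=0$ and forces $k=0$. For $N\ge 2$ a general hyperplane section $\cF|_\Sigma$ is again $H|_\Sigma$-linear on $\Sigma$, and the long exact sequence attached to $0\to\cF(-2H)\to\cF(-H)\to\cF|_\Sigma(-H)\to 0$, together with the vanishing $h^2(\cF(-2H))=0$ that is built into linearity for every $N\ge 2$, gives $k_\Sigma:=h^1(\cF|_\Sigma(-H))\le k$. The inductive hypothesis then produces $h^1(\cF|_\Sigma(tH))=0$ for all $t\ge k-1$, and Mumford's regularity lemma reduces the problem to showing $h^1(\cF((k-1)H))=0$ on $P$ itself.

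I expect this last vanishing to be the main technical obstacle, and I would handle it by a secondary induction on $k$; the base $k=0$ is the Steiner case, again covered by Proposition \ref{pNatural}. For $k\ge 1$ the monad of Proposition \ref{pLinearSpace} allows one to project out a single summand of $\cO_{\p N}(1)^{\oplus k}$ and so produce a linear sheaf $\cF'$ with $h^1(\cF'(-H))=k-1$ fitting into a short exact sequence $0\to\cF\to\cF'\to\cO_{\p N}(1)\to 0$. Applying the inductive hypothesis $\reg(\cF')\le k-1$ and chasing cohomology identifies $h^1(\cF((k-1)H))$ with the cokernel of the evaluation $H^0(\cF'((k-1)H))\to H^0(\cO_{\p N}(k))$, and the surjectivity of this map --- equivalently, the statement that the Artinian graded module $S^{\oplus k}/SV$ attached to the monad matrix is concentrated in degrees $\le k-1$ --- should follow from the sheaf-level surjectivity of $\cO_{\p N}^{\oplus m}\to\cO_{\p N}(1)^{\oplus k}$.
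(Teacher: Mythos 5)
Your reduction to $P\cong\p N$ via a finite projection and your disposal of the cases $i\ge 2$ through Proposition \ref{pNatural} are correct and agree with the paper, which after the same reduction simply invokes \cite[Theorem 3.2]{C--MR1} and observes that the proof there extends verbatim from vector bundles to linear sheaves. The gap in your proposal is that the one remaining vanishing, $h^1\big(\cF((k-1)H)\big)=0$ with $k=h^1\big(\cF(-H)\big)$, is never actually established. The induction on $N$ in your third paragraph does no work: descending $0$--regularity of $\cF((k-1)H)|_\Sigma$ to $P$ is exactly blocked by the unknown group $H^1\big(\cF((k-1)H)\big)$, as you concede (and for $N=2$ the restriction to a general curve section need not even be linear, since $h^1\big(\cF(-2H)\big)$ is not among the linearity conditions). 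So everything rests on the last paragraph.

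There the reduction is set up correctly: projecting off one summand of $\cO_{\p N}(1)^{\oplus k}$ does give $0\to\cF\to\cF'\to\cO_{\p N}(1)\to0$ with $\cF'$ linear and $h^1\big(\cF'(-1)\big)=k-1$, and the inductive hypothesis kills $h^1\big(\cF'(k-1)\big)$. But by that very exact sequence the cokernel of $H^0\big(\cF'(k-1)\big)\to H^0\big(\cO_{\p N}(k)\big)$ \emph{is} $H^1\big(\cF(k-1)\big)$, so asserting its surjectivity is asserting the conclusion; neither the $0$--regularity of $\cF'(k-1)$ nor the sheaf--level surjectivity of $\cF'\to\cO_{\p N}(1)$ formally implies it. Likewise, the statement that the Artinian module $\coker\big(S^{\oplus m}\to S(1)^{\oplus k}\big)$ vanishes in degrees $\ge k-1$ does not ``follow from the sheaf--level surjectivity of $\cO_{\p N}^{\oplus m}\to\cO_{\p N}(1)^{\oplus k}$'' by any soft argument: that implication is precisely the substance of \cite[Theorem 3.2]{C--MR1}. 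A genuine proof must use, for instance, that full pointwise rank forces each row of the matrix to span $H^0\big(\cO_{\p N}(1)\big)$, and then control the syzygy bundles that appear when one peels off a row --- which is why the bound degrades by one at each of the $k$ steps. As written, your argument is circular at exactly the point where the paper outsources the work to the cited theorem.
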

\begin{proof} 
Let $p\colon P\to\p N$ be a finite morphism such that $\cO_P(H)\cong p^*\cO_{\p N}(1)$. We know that $p_*\cF$ is the cohomology of a monad $\cM^\bullet$ as in Proposition \ref{pLinearSpace}. Thanks to the equality $h^i\big(\cF((m''-i)H)\big)=h^i\big((p_*\cF)(m''-i)\big)$, it suffices to check that $\reg(p_*\cF)\le h^1\big((p_*\cF)(-1)\big)=m''$, thus we will assume $P\cong\p N$ from now on, whence $\cO_P(H)\cong\cO_{\p N}(1)$.

An analogous statement for vector bundles is proved in \cite[Theorem 3.2]{C--MR1}: nevertheless, the proof therein can be repeated verbatim for each linear sheaf. It follows that $\reg(\cF)\le m''=h^1\big(\cF(-1)\big)$.
\end{proof}

\begin{remark}
If $\cF$ is locally free, then $\cF^{U,H}$ is the cohomology of the dual of monad \eqref{Monad}, hence $\reg(\cF^{U,H})\le m'$. This concludes the remark.
\end{remark}

We close this section by exploiting the interesting relation between linear bundles on an $N$--fold $P$ and on certain hypersurfaces inside it.

\medbreak
\noindent{\it Proof of Theorem \ref{tHypersurface}.}
By definition, we have the exact sequence
\begin{equation}
\label{seqFE}
0\longrightarrow\cG(-H)\mapright\varphi\cF^{U,H}\longrightarrow\cE\longrightarrow0,
\end{equation}
hence $\cE$ is a sheaf supported on $X$. Since $P$ is reduced, the same is true for $X$, hence it is integral. Moreover, $\det(\varphi)$ vanishes exactly on $X$, hence $n=N-1$.

Notice that $\pd_{\cO_{P,x}}\cE_x\le 1$ for each $x\in X$, hence the Auslander--Buchsbaum formula and \cite[Exercise 1.2.26 b)]{Br--He} yield $\depth_{\cO_{X,x}}\cE_x=\depth_{\cO_{P,x}}\cE_x\ge N-1$. Since $\dim(\cE_x)\le\dim(X)=N-1$, we deduce that $\cE$ is locally Cohen--Macaulay.

The cohomology of sequence \eqref{seqFE} suitably twisted then yields
\begin{gather*}
h^i\big(\cE(-(i+1)h)\big)\le h^i\big(\cF^{U,H}(-(i+1)H)\big)+h^{i+1}\big(\cG(-(i+2)H)\big),\\
h^j\big(\cE(-jh)\big)\le h^j\big(\cF^{U,H}(-jH)\big)+h^{j+1}\big(\cG(-(j+1)H)\big).
\end{gather*}
By definition, also thanks to Remark \ref{rDual}, the summands on the right vanish if $0\le i\le N-2$, $0\le i+1\le N$, $2\le j\le N$, $2\le j+1\le N$. These conditions and the restriction $n\ge3$ imply $0\le i\le N-3$ and $2\le j\le N-1$, hence
$$
h^i\big(\cE(-(i+1)h)\big)=h^j\big(\cE(-jh)\big)=0
$$
in the same range. Finally, the cohomology of sequence \eqref{seqFE} suitably twisted and equality \eqref{Serre} yield the two first equalities in the statement.

Computing $\chi$ from sequence \eqref{seqFE} we obtain
$$
\chi(\cE)=\chi(\cF^{U,H})-\chi(\cG(-H))=(-1)^N\chi(\cF(-(N+1)H)-h^1\big(\cG(-H)\big),
$$
because $\cG$ is $H$--linear. By combining the above equality with Corollary \ref{cCharacteristicLinear} we obtain
\begin{equation}
\label{chiEFG}
\chi(\cE)=\chi(\cF)+(N+1)h^1\big(\cF(-H)\big)-(N+1)h^{N-1}\big(\cF(-NH)\big)-h^1\big(\cG(-H)\big).
\end{equation}
If $p\colon P\to\p N$ is finite with $\cO_P(H)\cong p^*\cO_{\p N}(1)$, then $\rk(p_*\cF)=\rk(\cF)H^N$. Thanks to \cite[Corollary III.11.2 and Exercises III.8.1, III.8.3]{Ha2} and equality \eqref{BettiMonad} we deduce that
\begin{equation}
\label{chiF}
\chi(\cF)=\rk(\cF)H^N-Nh^1\big(\cF(-H)\big)+h^{N-1}\big(\cF(-NH)\big).
\end{equation}
A similar argument yields also
\begin{equation}
\label{chiE}
\chi(\cE)=\rk(\cE)h^{N-1}-(N-1)h^1\big(\cE(-h)\big)+h^{N-2}\big(\cE(-(N-1)h)\big).
\end{equation}
The last equation in the statement follows by combining equalities \eqref{chiEFG}, \eqref{chiF} and \eqref{chiE} with the first two equalities in the statement.
\qed
\medbreak

\section{Steiner sheaves on projective schemes}
\label{sSteiner}
In this section we will deal with some properties of Steiner sheaves on irreducible projective schemes, besides the ones already described in the previous section. 

The proof of the following proposition is analogous to the proof of Proposition \ref{pCharacterizationLinear} and of \cite[Theorem 1.4]{An--Ca1}.

\begin{proposition}
\label{pCharacterizationSteiner}
Let $P$ be an irreducible projective scheme of dimension $N\ge1$ endowed with an ample and globally generated line bundle $\cO_P(H)$.

If $\cF$ is a coherent sheaf on $P$, then the following assertions are equivalent.
\begin{enumerate}
\item $\cF$ is a $H$--Steiner sheaf.
\item $\cF$ is generically non--zero and the following properties hold:
\begin{itemize}
\item $h^0\big(\cF(-(t+1)H)\big)=h^N\big(\cF((t-N)H)\big)=0$ if $t\ge0$;
\item $h^i\big(\cF(-(i+t+1)H)\big)=h^{j}\big(\cF((t-j)H)\big)=0$ if $1\le i\le N-2$, $1\le j\le N-1$ and $t\ge0$.
\end{itemize}
\item For each  finite morphism $p\colon P\to\p N$ with $\cO_P(H)\cong p^*\cO_{\p N}(1)$, then $p_*\cF$ is a Steiner sheaf on $\p N$ with respect to $\cO_{\p N}(1)$.
\item There is a finite morphism $p\colon P\to\p N$ with $\cO_P(H)\cong p^*\cO_{\p N}(1)$ such that $p_*\cF$ is a Steiner sheaf on $\p N$ with respect to $\cO_{\p N}(1)$.
\end{enumerate}
\end{proposition}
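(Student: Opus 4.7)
The plan is to parallel the proof of Proposition \ref{pCharacterizationLinear} essentially verbatim, the only novelty being to absorb the extra Steiner hypothesis $h^1(\cF(-H))=0$ into the propagation step. I will establish $(1)\Leftrightarrow(2)$ by iterating Proposition \ref{pNatural}, and $(1)\Leftrightarrow(3)\Leftrightarrow(4)$ by pushforward along a finite morphism $p\colon P\to\p N$ with $\cO_P(H)\cong p^*\cO_{\p N}(1)$, whose existence is guaranteed by the hypothesis that $\cO_P(H)$ is both ample and globally generated (take $N+1$ suitably general sections).

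For $(2)\Rightarrow(1)$, specialize $t=0$. For the reverse direction, assemble the defining vanishings of a Steiner sheaf into two monotone chains. The first, $h^i(\cF(-(i+1)H))=0$ for $0\le i\le N-2$, consists of $h^0(\cF(-H))=0$ together with the middle linear vanishings; applying Proposition \ref{pNatural}(1) to $\cA:=\cF(-H)$ in its ascending form with $m=N-2$ yields $h^i(\cF(-(i+t+1)H))=0$ for all $t\ge 0$ and $0\le i\le N-2$, which is the first family of vanishings in (2). The second chain, $h^j(\cF(-jH))=0$ for $1\le j\le N$, is made up of $h^N(\cF(-NH))=0$, the middle linear vanishings, and the extra Steiner condition $h^1(\cF(-H))=0$; applying Proposition \ref{pNatural}(1) to $\cA:=\cF$ in its descending form with $m=1$ gives $h^j(\cF((t-j)H))=0$ for all $t\ge 0$ and $1\le j\le N$, which is the second family in (2). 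The point to note is that it is precisely the extra Steiner hypothesis at $j=1$ that permits taking $m=1$ here, thereby enlarging the range to $1\le j\le N-1$ as compared with the purely linear case.

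For $(1)\Leftrightarrow(3)\Leftrightarrow(4)$, fix any finite $p\colon P\to\p N$ as above. Since $p$ is finite one has $R^ip_*=0$ for $i\ge 1$, and the projection formula gives $p_*(\cF(tH))\cong(p_*\cF)(t)$, so $H^i(P,\cF(tH))\cong H^i(\p N,(p_*\cF)(t))$ for every $i$ and $t$ by \cite[Exercise III.8.1]{Ha2}. Hence the Steiner list of vanishings for $\cF$ with respect to $H$ translates term-by-term into the Steiner list for $p_*\cF$ with respect to $\cO_{\p N}(1)$. Surjectivity of $p$ (automatic from finiteness together with $\dim P=N$ and irreducibility) identifies the generic point of $\p N$ with the image of the generic point of $P$, so $p_*\cF$ is generically non-zero exactly when $\cF$ is. The whole argument is then bookkeeping, and the main (minor) obstacle is merely to be careful with the enlarged descending range at $j=1$, which is exactly what the Steiner hypothesis is designed to furnish; once that is handled, one recovers precisely the pattern already used in the proof of Proposition \ref{pCharacterizationLinear} and in \cite[Theorem 1.4]{An--Ca1}.
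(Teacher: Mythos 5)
Your proposal is correct and takes essentially the same route the paper intends, which simply declares the proof analogous to that of Proposition \ref{pCharacterizationLinear}: propagate the defining vanishings via Proposition \ref{pNatural} (the extra Steiner condition $h^1\big(\cF(-H)\big)=0$ being exactly what allows $m=1$ in the descending chain), and transfer everything along a finite projection $p\colon P\to\p N$ using $R^ip_*=0$, the projection formula, and the fact that $p$ sends the generic point of $P$ to that of $\p N$. The only cosmetic remark is that for $N=1$ the ascending chain should be run with $m=0$ rather than $m=N-2$, which changes nothing.
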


If $\cF$ is $H$--Steiner on $P$, then $\chi(\cF)=h^0\big(\cF\big)$ and Corollary \ref{cCharacteristicLinear} yields for each $t\ne0$
\begin{equation}
\label{CharacteristicSteiner}
\chi(\cF(tH))=h^0\big(\cF\big){{N+t}\choose N}-h^{N-1}\big(\cF(-NH)\big)
{{N+t-1}\choose N}.
\end{equation}

Let $X\subseteq\p N$ and set $\cO_X(h):=\cO_X\otimes\cO_{\p N}(1)$. If $X$ has dimension $n=1$, then each $h$--Steiner sheaf $\cF$ on $X$ is $h$--Ulrich, hence \cite[Propositions 1.9 and 2.1]{E--S--W} characterize the minimal free resolution of $\cF$ as $\cO_{\p N}$--sheaf. When $n\ge2$, it is still possible to identify $h$--Steiner sheaves via their minimal free resolutions, as in the case of Ulrich sheaves.

\begin{proposition}
\label{pSteinerResolution}
Let $X\subseteq\p N$ be an irreducible scheme of dimension $n\ge2$ and set $\cO_X(h):=\cO_X\otimes\cO_{\p N}(1)$.

A coherent sheaf $\cF$ on $X$ is $h$--Steiner if and only if it is both generically non--zero and the minimal free resolution of $H_*^0\big(\cF\big)$ as a graded module over $S:=\field[x_0,\dots x_N]$ has the form
\begin{equation}
\label{seqSteinerResolution}
\begin{aligned}
0\longrightarrow S(n-N-1)^{\oplus m_{N-n+1}}&\mapright{\alpha_{N-n+1}} S(n-N)^{\oplus m_{N-n}}\mapright{\alpha_{N-n}}\dots \\
&\mapright{\alpha_2} S(-1)^{\oplus m_1}\mapright{\alpha_1} S^{\oplus m_0}\longrightarrow H_*^0\big(\cF\big)\longrightarrow0.
\end{aligned}
\end{equation}
Moreover, if this occurs, then 
\begin{equation}
\label{BettiSteiner}
m_t=h^0\big(\cF\big){{N-n}\choose {t}}+ h^{n-1}\big(\cF(-nh)\big) {{N-n} \choose {t-1}}
\end{equation}
for $0\le t\le N-n+1$.
\end{proposition}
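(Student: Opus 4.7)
The proof splits into two implications.

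For the ``if'' direction, I will break the resolution \eqref{seqSteinerResolution} into short exact sequences
\begin{equation*}
0 \longrightarrow \cN_{k+1} \longrightarrow \cO_{\p N}(-k)^{\oplus m_k} \longrightarrow \cN_k \longrightarrow 0
\end{equation*}
with $\cN_0 = \cF$ and $\cN_{N-n+1} = \cO_{\p N}(n-N-1)^{\oplus m_{N-n+1}}$. Twisting by $\cO_{\p N}(-jh)$ and chasing through the long exact sequences in cohomology, together with the vanishings $h^i(\cO_{\p N}(a)) = 0$ for $1 \le i \le N-1$, each cohomology vanishing in the Steiner definition on $\cF$ reduces to an easy bookkeeping.

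For the ``only if'' direction, Proposition \ref{pRegularity} applied with $h^1(\cF(-h))=0$ gives $\reg(\cF) \le 0$, so $\cF$ is globally generated with $m_0 := h^0(\cF)$ minimal sections, yielding a surjection $\alpha_1\colon \cO_{\p N}^{\oplus m_0} \twoheadrightarrow \cF$. I then plan to build the resolution iteratively: setting $\cK_1 := \ker \alpha_1$ and computing the cohomology of $\cK_1(j)$ from the resulting long exact sequence, the Steiner cohomology table of $\cF$ derived from Proposition \ref{pCharacterizationSteiner} (non-zero only at $h^0(\cF(jh))$ for $j \ge 0$, at $h^{n-1}(\cF(jh))$ for $j \le -n$, and at $h^n(\cF(jh))$ for $j \le -n-1$) will show that $\cK_1(1)$ is again $0$-regular and that the next syzygy contributes $m_1$ generators in degree one. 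Iterating produces a resolution of the stated linear shape; termination at step $N-n+1$ is forced by the Auslander--Buchsbaum formula applied to $H^0_*(\cF)$.

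The Betti numbers \eqref{BettiSteiner} are then pinned down by equating
\begin{equation*}
\sum_{k=0}^{N-n+1}(-1)^k m_k \binom{N+t-k}{N} = \chi(\cF(th)) = h^0(\cF)\binom{n+t}{n} - h^{n-1}(\cF(-nh))\binom{n+t-1}{n},
\end{equation*}
where the first equality is the Euler characteristic read off from the resolution and the second follows from \eqref{CharacteristicSteiner} applied to the pushforward $p_*\cF$ on $\p n$ via a finite projection $p \colon X \to \p n$ with $\cO_X(h) \cong p^*\cO_{\p n}(1)$. The Koszul-type identity $\sum_{k=0}^{N-n}(-1)^k\binom{N-n}{k}\binom{N+t-k}{N} = \binom{n+t}{n}$ (and its translate under $t \mapsto t-1$) then forces \eqref{BettiSteiner}.

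The delicate step will be the iterative construction in the ``only if'' direction: carefully controlling the cohomology of successive syzygies $\cK_s$ and verifying $0$-regularity of each $\cK_s(s)$, while also ruling out extra Betti numbers in off-diagonal positions. As a shortcut, one can instead apply Proposition \ref{pLinearSpace} (with $m''=0$) to $p_*\cF$ to obtain a two-term resolution $0 \to \cO_{\p n}(-1)^{\oplus q} \to \cO_{\p n}^{\oplus m_0} \to p_*\cF \to 0$ on $\p n$, with $q = h^{n-1}(\cF(-nh))$, and then assemble the resolution over $\cO_{\p N}$ via the mapping cone of the lifted two-term complex and the Koszul resolution of $\cO_{\p n} \subseteq \cO_{\p N}$; a cohomological comparison using the common cohomology table then transfers the result back to $\cF$.
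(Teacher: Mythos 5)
Your main line of argument is sound and, despite the different packaging, rests on the same two pillars as the paper's proof: $\reg(\cF)\le h^1\big(\cF(-h)\big)=0$ forces the minimal graded free resolution of $M:=H^0_*\big(\cF\big)$ to be linear, and a depth bound on $M$ forces it to stop after $N-n+1$ steps. The paper obtains the length bound from Serre's FAC (using that $h^i\big(\cF(th)\big)=0$ for $1\le i\le n-2$ and all $t$, via Proposition \ref{pCharacterizationSteiner}) and the linearity from a citation to Eisenbud, whereas you rebuild the resolution syzygy by syzygy; your iteration does close up, since $h^{j}\big(\cF(-jh)\big)=0$ for $1\le j\le n$ is exactly what makes each $\cK_s(s)$ $0$--regular and $h^0\big(\cF(-h)\big)=0$ rules out off--diagonal generators, so this is a legitimate if longer route. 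Your determination of the $m_t$ is genuinely different: the paper solves the lower--triangular system \eqref{SystemSteiner} by induction, reducing to the Ulrich case of \cite{E--S--W}, while you note that the Hilbert--polynomial identity determines the $m_k$ uniquely and verify \eqref{BettiSteiner} against it via the Vandermonde--type identity. That is cleaner, but you must state explicitly why uniqueness holds, namely that the $N-n+2$ shifted polynomials $\binom{N+t-k}{N}$ are linearly independent.

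One warning: the ``shortcut'' you propose at the end does not work. The two--term resolution of $p_*\cF$ produced by Proposition \ref{pLinearSpace} lives on the target $\p n$ of a \emph{finite projection} $p\colon X\to\p n$; this $\p n$ is not a linear subspace of $\p N$ containing $X$ (in general $X$ is nondegenerate in $\p N$), so there is no ``Koszul resolution of $\cO_{\p n}\subseteq\cO_{\p N}$'' and no mapping cone to take. What the resolution on $\p n$ records is the structure of $M$ over the subring $\field[y_0,\dots,y_n]\subseteq\field[x_0,\dots,x_N]$ determined by the projection, and that does not yield its resolution over the full polynomial ring. Keep the iterative construction (or the paper's regularity argument) as the actual proof.
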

\begin{proof}
If $\cF$ is a $h$--Steiner sheaf, then $h^j\big(\cF(-jh)\big)=0$ for $1\le j\le N$ by definition, hence $\reg(\cF)=0$. In particular the minimal free graded resolution $F_\bullet$ of $H^0_*\big(\cF\big)$ over $S:=\field[x_0,\dots x_N]$ satisfies $F_i\cong S(-i)^{\oplus m_i}$, where $m_i\in\bZ$ thanks to \cite[Corollary 4.17]{Eis}. Moreover \cite[Proposition 77.2 (a)]{Ser} implies $F_i=0$ for $i\ge N+1$.

We certainly have $m_N=0$. Indeed, sheafifying $F_\bullet\to H^0_*\big(\cF\big)$ and splitting it, we obtain the short exact sequence
\begin{equation}
\label{seqA}
0\longrightarrow\cA_{s+1}(-1)\longrightarrow\cO_{\p N}^{\oplus m_s}\longrightarrow\cA_s\longrightarrow0
\end{equation}
for each $0\le s\le N-1$, where we set $\cA_0:=\cF$ and $\cA_{N}:=\cO_{\p N}^{\oplus m_{N}}$. Thus, computing their cohomologies we obtain $0=h^0\big(\cF(-1)\big)=h^N\big(\cA_N(-N-1)\big)=m_N$.

If $n=2$, then $N-n+1=N-1$, hence sequence \eqref{seqSteinerResolution} is exactly the resolution $F_\bullet\to H_*^0\big(\cF\big)$. If $n\ge3$, then $H^0_*\big(\cF\big)$ satisfies the hypothesis of \cite[Proposition 77.2 (b)]{Ser}. Thus the length of  $F_\bullet$ is $N-n+1$ at most and we can conclude as in the case $n=2$.

Conversely, assume that $\cF$ is generically non--zero on $X$ and fits into sequence \eqref{seqSteinerResolution}. As above we obtain the short exact sequence
\eqref{seqA} where $0\le s\le N-n$. Computing their cohomologies we obtain $h^j\big(\cA_s(-t)\big)=h^{j+1}\big(\cA_{s+1}(-t-1)\big)$ for $1\le t\le N$ and $0\le j\le N$. Taking $t:=j$ we obtain
$$
h^j\big(\cF(-jh)\big)=m_{N-n+1}h^{j+N-n+1}\big(\cO_{\p N}(n-N-1-j)\big)=0
$$
in the range $1\le j\le n$. Taking $t:=j+1$ we obtain
$$
h^j\big(\cF(-(j+1)h)\big)=m_{N-n+1}h^{j+N-n+1}\big(\cO_{\p N}(n-N-2-j)\big)=0
$$
in the range $0\le j\le n-2$. Thus $\cF$ is a $h$--Steiner sheaf on $X$.

In the remaining part of the proof we compute the $m_t$'s.  
The cohomology of sequence \eqref{seqA} for $s:=t$ returns
\begin{equation}
\label{Mt}
m_t=\chi(\cA_{t})+\chi(\cA_{t+1}(-1)).
\end{equation}
The cohomologies of sequences \eqref{seqA} for $s:=t+j$ tensored by $\cO_{\p N}(-j)$ yield
$$
\chi(\cA_{t+j}(-j))=-\chi(\cA_{t+1+j}(-1-j))
$$
for each $1\le j\le N$. Thus
\begin{equation}
\label{Vanish}
\begin{aligned}
\chi(\cA_{t+1}(-1))=(-1)^{N-n-t}m_{N-n+1}\chi(\cO_{\p N}(-N+n-1+t))=0,
\end{aligned}
\end{equation}
because $n\ge2$ and $\cA_{N-n+1}=\cO_{\p N}^{\oplus m_{N-n+1}}$. The cohomologies of sequences \eqref{seqA} for $s:=t-i$ tensored by $\cO_{\p N}(i)$ yield
\begin{equation}
\label{chiA}
\begin{aligned}
\sum_{i=1}^t(-1)^{i}m_{t-i}{{N+i}\choose N}&=\sum_{i=1}^t(-1)^{i}(\chi(\cA_{t-i}(i))+\chi(\cA_{t+1-i}(i-1)))\\
&=-\chi(\cA_{t})+(-1)^{t}\chi(\cF(t))
\end{aligned}
\end{equation}

By combining equalities \eqref{Mt}, \eqref{Vanish} and \eqref{chiA} with equality \eqref{CharacteristicSteiner} we obtain the following square system of linear equations in the variables $m_0,\dots m_{N-n+1}$
\begin{equation}
\label{SystemSteiner}
\left\lbrace\begin{array}{l} 
\sum_{i=1}^t(-1)^im_{t-i}{{N+i}\choose N}+m_t=(-1)^th^0\big(\cF\big){{n+t}\choose n}+(-1)^{t+1}\quantum{{n+t-1}\choose n},\\
t=0,\dots,N-n+1,
\end{array}\right.
\end{equation}
where $\quantum:=h^{n-1}\big(\cF(-nh)\big)$. The matrix of the system is lower triangular and its diagonal entries are all equal to $1$. Thus system \eqref{SystemSteiner} has a unique solution exclusively depending on the two parameters $h^0\big(\cF\big)$ and $\quantum$, say
$m_t=h^0\big(\cF\big)u_t+\quantum  v_t$ for $t=0,\dots,N-n+1$ where $u_t$ and $v_t$ are integer--valued functions. 

In particular, the value of $u_t$ can be computed by taking $\quantum=0$. In this case $\cF$ is $h$--Ulrich, hence 
$$
u_t={N-n\choose t}
$$
thanks to \cite[p. 548]{E--S--W}. 

In order to compute $v_t$, we first notice that $v_0=0$ thanks to system  \eqref{SystemSteiner}. Thus it is enough to show that $v_t=u_{t-1}$ for $t\ge1$. We will prove such an equality by induction on $t$. 

When $t=1$  system  \eqref{SystemSteiner} yields $m_1-m_0(N+1)=\quantum-(n+1)h^0\big(\cF\big)$. The equality $m_0=h^0\big(\cF\big)$ yields $v_1=u_0$. Let $t\ge2$ and assume that equality \eqref{BettiSteiner} is true for $s<t$. System \eqref{SystemSteiner} yields
$$
m_s=\sum_{i=1}^s(-1)^{i-1}m_{s-i}{{N+i}\choose N}+(-1)^sh^0\big(\cF\big){{n+s}\choose n}+(-1)^{s+1}\quantum{{n+s-1}\choose n}.
$$
Thus, by inductive hypothesis
\begin{gather*}
\begin{align*}
m_{t-1}=&\sum_{i=1}^{t-1}(-1)^{i-1}\left(h^0\big(\cF\big){{N-n}\choose {t-1-i}}+ \quantum {{N-n} \choose {t-2-1}}\right){{N+i}\choose N}+\\
&+(-1)^{t-1}h^0\big(\cF\big){{n+t-1}\choose n}+(-1)^{t}\quantum{{n+t-2}\choose n},
\end{align*}\\
\begin{align*}
m_{t}=&\sum_{i=1}^{t}(-1)^{i-1}\left(h^0\big(\cF\big){{N-n}\choose {t-i}}+ \quantum {{N-n} \choose {t-i-1}}\right){{N+i}\choose N}+\\
&+(-1)^th^0\big(\cF\big){{n+t}\choose n}+(-1)^{t+1}\quantum{{n+t-1}\choose n}.
\end{align*}
\end{gather*}
Computing $u_{t-1}$ from the first expression and $v_t$ from the second one, we obtain $v_t=u_{t-1}$ because  the summand $i=t$ in the expression of $v_t$ vanishes.

Thus the proof is complete.
\end{proof}

The above proposition returns immediately the following corollary  (see \cite[Theorem 4.2] {Ja--VM}). In particular our definition of Steiner sheaves matches with the classical one from \cite[Section 3]{Do--Ka}.
\begin{corollary}
\label{cSteinerSpace}
Let $N\ge2$.

A coherent sheaf $\cF$ on $\p N$ is Steiner with respect to $\cO_{\p N}(1)$ if and only if it is both generically non--zero and fits into an exact sequence of the form
\begin{equation}
\label{seqSteinerSpace}
0\longrightarrow\cO_{\p N}(-1)^{\oplus m_1}\longrightarrow\cO_{\p N}^{\oplus m_0}\longrightarrow\cF\longrightarrow0.
\end{equation}
Moreover, if this occurs, then
$$
m_0=h^0\big(\cF\big),\qquad m_1=h^{N-1}\big(\cF(-N)\big).
$$
\end{corollary}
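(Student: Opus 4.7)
The plan is to derive this corollary as a direct specialization of Proposition \ref{pSteinerResolution}. Indeed $\p N$ is itself an irreducible projective scheme of dimension $n=N$, and $\cO_{\p N}(h)=\cO_{\p N}(1)$; since $N\ge 2$ the hypothesis of the proposition is satisfied when we take $X:=\p N$. Thus no fresh argument is needed beyond specialising the general resolution and its Betti numbers to this case.

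First I would observe that with $n=N$ the resolution \eqref{seqSteinerResolution} has length $N-n+1=1$, so it collapses precisely to the sequence \eqref{seqSteinerSpace} claimed in the corollary. The ``if'' and ``only if'' directions, as well as the hypothesis that $\cF$ be generically non--zero, then transfer verbatim from the proposition.

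Next I would plug $n=N$ into the Betti formula \eqref{BettiSteiner}. For $t=0$ only the first summand survives (the second involves ${N-n \choose -1}=0$) and gives $m_0=h^0\big(\cF\big){0\choose 0}=h^0\big(\cF\big)$. For $t=1$ only the second summand survives (the first involves ${N-n \choose 1}=0$) and gives $m_1=h^{N-1}\big(\cF(-Nh)\big){0\choose 0}=h^{N-1}\big(\cF(-N)\big)$, which are exactly the values stated.

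There is no genuine obstacle here; the only point to double--check is the convention that binomial coefficients with a negative lower index vanish, which guarantees that the spurious summands disappear in the two boundary cases $t=0,1$. Once this is noted, the corollary, and the fact that it recovers the classical notion of Steiner sheaf in the sense of \cite[Definition 3.1]{Do--Ka} and \cite[Theorem 4.2]{Ja--VM}, are immediate.
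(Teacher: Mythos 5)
Your proposal is correct and coincides with the paper's own route: Corollary \ref{cSteinerSpace} is stated there as an immediate consequence of Proposition \ref{pSteinerResolution}, obtained exactly by taking $X:=\p N$, $n=N$, so that the resolution \eqref{seqSteinerResolution} collapses to \eqref{seqSteinerSpace} and formula \eqref{BettiSteiner} gives $m_0=h^0\big(\cF\big)$ and $m_1=h^{N-1}\big(\cF(-N)\big)$. Your remark on the vanishing of binomial coefficients with negative lower index is the right bookkeeping point and matches the intended specialization.
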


\begin{remark}
Let $P$ be integral and endowed with an ample and globally generated line bundle $\cO_P(H)$. 

If $\cF$ is $H$--Steiner and $p\colon P\to\p N$ is a finite morphism induced by $\cO_P(H)$, then $p_*\cF$ is Steiner with respect to $\cO_{\p N}(1)$ and we can apply Corollary  \ref{cSteinerSpace} to it. Since $\deg(p)=\deg(P)$, it follows that $\rk(p_*\cF)=\rk(\cF)\deg(P)$, hence 
$$
h^0\big(\cF\big)=\rk(\cF)\deg(P)+h^{N-1}\big(\cF(-NH)\big)
$$
by confronting the ranks of the sheaves in sequence \eqref{seqSteinerSpace} for $p_*\cF$.

Thus Ulrich sheaves on $P$ are the ones having the maximal number of generators in the class of  aCM sheaves on $P$ of fixed rank (see \cite[Theorem 3.1]{C--H1} and the minimal number of generators among Steiner sheaves on $P$ of fixed rank.
\end{remark}

\begin{example}
\label{eKernel}
Let $a,b,\quantum$ be non--negative integers with $b\le (N+1)\quantum+a$ and consider a sheaf $\cF$ fitting into an exact sequence of the form 
\begin{equation}
\label{seqOmega}
0\longrightarrow\cO_{\p {N}}^{\oplus b}\longrightarrow\cO_{\p {N}}^{\oplus a}\oplus\Omega_{\p {N}}^{N-1}(N)^{\oplus \quantum}\longrightarrow\cF\longrightarrow0.
\end{equation}
If $\cF$ is generically non--zero, then it is easy to check by direct computation that it is a Steiner sheaf with respect to $\cO_{\p N}(1)$ with $h^{N-1}\big(\cF(-N)\big)=\quantum$. In particular, $\cF$ fits into sequence \eqref{seqSteinerSpace} with $m_0=h^0\big(\cF\big)=(N+1)\quantum+a-b$ and $m_1=\quantum$.

Conversely, if $\cF$ is a Steiner sheaf fitting into sequence \eqref{seqSteinerSpace}, then the cohomology of $\cF(-1)$ is as in table 1.
\begin{tiny}
\begin{table}[H]
\label{TableKernel}
\centering
\bgroup
\def\arraystretch{1.5}
\begin{tabular}{ccccc}
\cline{2-4}
\multicolumn{1}{c}{} &\multicolumn{1}{|c|}{$h^N\big(\cF(-N-1)\big)$} & \multicolumn{1}{c|}{0} & \multicolumn{1}{c|}{0} & $q=N$ \\ 
\cline{2-4}
\multicolumn{1}{c}{} &\multicolumn{1}{|c|}{$h^{N-1}\big(\cF(-N-1)\big)$} & \multicolumn{1}{c|}{$h^{N-1}\big(\cF(-N)\big)$} & \multicolumn{1}{c|}0 & $q=N-1$ \\ 
\cline{2-4}
\multicolumn{1}{c}{} &\multicolumn{1}{|c|}{0} & \multicolumn{1}{c|}{0} & \multicolumn{1}{c|}0 & $0\le q\le N-2$ \\ 
\cline{2-4}
&$p=-N-1$& $p=-N$ & $-N+1\le p\le -1$
\end{tabular}
\egroup
\caption{Values of $h^{q}\big(\cF(ph)\big)$ in the range $-N-1\le p\le -1$}
\end{table}
\end{tiny}
\noindent The existence of sequence \eqref{seqOmega} then follows from \cite[Beilinson's theorem (strong form)]{An--Ot1}.
\end{example}

The notion of Steiner sheaf is stable with respect to the restriction on general hyperplane sections.

\begin{proposition}
\label{pHyperplaneSteiner}
Let $P$ be an irreducible projective scheme of dimension $N\ge3$ endowed with an ample and globally generated line bundle $\cO_P(H)$.

If $Y\in \vert H\vert$ is general and $\cF$ is a $H$--Steiner sheaf, then $\cO_Y\otimes\cF$ is a Steiner sheaf with respect to $\cO_Y\otimes\cO_P(H)$ and
$$
h^{N-1}\big(\cF(-NH)\big)=h^{N-2}\big(\cO_Y\otimes\cF(-(N-1)H)\big).
$$
\end{proposition}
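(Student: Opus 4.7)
The plan is to restrict $\cF$ to $Y$ via the short exact sequence
\[
0\longrightarrow\cF((t-1)H)\longrightarrow\cF(tH)\longrightarrow(\cO_Y\otimes\cF)(tH)\longrightarrow0,
\]
which holds for every $t\in\bZ$ once the general $Y\in\vert H\vert$ is chosen to avoid the (finitely many) associated points of $\cF$; such a $Y$ exists because $\cO_P(H)$ is globally generated. Writing $\cF':=\cO_Y\otimes\cF$, the associated long cohomology sequence yields the routine bound
\[
h^i(\cF'(tH))\le h^i(\cF(tH))+h^{i+1}(\cF((t-1)H))
\]
for all $i$ and $t$. Everything then reduces to bookkeeping of indices against the extended vanishings of $\cF$ provided by Proposition \ref{pCharacterizationSteiner}(2).

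First I would verify that $\cF'$ satisfies on the $(N-1)$-dimensional scheme $Y$ each clause of Definition \ref{dLinearSteiner}, namely $h^0(\cF'(-H))=h^{N-1}(\cF'(-(N-1)H))=0$ together with $h^i(\cF'(-(i+1)H))=0$ for $1\le i\le N-3$ and $h^j(\cF'(-jH))=0$ for $1\le j\le N-2$. Specialising the bound above to each of these cases produces two summands, each of the form $h^i(\cF(-(i+1+s)H))$ or $h^j(\cF(-(j+s)H))$ for some $s\in\{0,1\}$, and every such summand vanishes by Proposition \ref{pCharacterizationSteiner}(2) applied to the $H$--Steiner sheaf $\cF$. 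The hypothesis $N\ge 3$ is used exactly to keep all these indices inside the admissible ranges.

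For the numerical equality, I would specialise to $t=-(N-1)$ and extract the portion
\[
H^{N-2}(\cF(-(N-1)H))\to H^{N-2}(\cF'(-(N-1)H))\to H^{N-1}(\cF(-NH))\to H^{N-1}(\cF(-(N-1)H))
\]
of the long exact sequence. The leftmost term vanishes by Proposition \ref{pCharacterizationSteiner}(2) with $i=N-2$ and $t=0$, while the rightmost vanishes by the Steiner condition $h^j(\cF(-jH))=0$ with $j=N-1$. Hence the middle arrow is an isomorphism, which gives the claimed equality.

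No serious obstacle is anticipated: the argument is mechanical, in the spirit of Proposition \ref{pNatural} and of the similar proofs developed in the previous sections, and the only delicate point is checking that each needed index lies inside the extended Steiner vanishings guaranteed by Proposition \ref{pCharacterizationSteiner}(2).
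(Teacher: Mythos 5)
Your argument is correct and is essentially the paper's proof: the paper likewise chooses $Y$ general so as to avoid the associated points of $\cF$, restricts via the sequence $0\to\cF(-H)\to\cF\to\cO_Y\otimes\cF\to0$, and obtains both the Steiner vanishings on $Y$ and the equality $h^{N-1}\big(\cF(-NH)\big)=h^{N-2}\big(\cO_Y\otimes\cF(-(N-1)H)\big)$ by twisting and taking cohomology, exactly as you do. Your index bookkeeping via Proposition \ref{pCharacterizationSteiner}(2) is the same verification the paper leaves implicit, so no changes are needed.
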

\begin{proof}
We can assume that $Y$ does not contain any associated point of $\cF$, hence the natural sequence
$$
0\longrightarrow\cF(-H)\longrightarrow\cF\longrightarrow\cO_Y\otimes\cF\longrightarrow0
$$
is exact. The statement then follows by computing the cohomology of the above sequence after suitable twists.
\end{proof}

When either $P$ or $\cO_P(H)$ satisfy further extra conditions one can say something more. E.g. we can assume that $P$ is an $N$--fold, i.e. it is smooth, hence integral.

\begin{proposition}
\label{pSlope}
Let $P$ be an $N$--fold with $N\ge1$ endowed with an ample and globally generated  line bundle $\cO_P(H)$. Assume that either the characteristic of $\field$ is $0$ or $\cO_P(H)$ is very ample.

If $\cF$ is a $H$--Steiner vector bundle on $P$, then 
\begin{equation}
\label{Slope}
c_1(\cF)H^{N-1}=\frac{\rk(\cF)}2((N+1)H^N+K_PH^{N-1})+h^{N-1}\big(\cF(-NH)\big).
\end{equation}
\end{proposition}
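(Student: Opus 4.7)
The plan is to reduce the computation to a general curve obtained as a complete intersection of hyperplane sections, and evaluate $\chi(\cF|_C(-H))$ in two different ways.

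First I would invoke Bertini's theorem, which is available under either hypothesis of the proposition: when $\mathrm{char}(\field)=0$ it applies directly to the smooth variety $P$, while when $\cO_P(H)$ is very ample we may reduce to the classical projective Bertini by means of $\phi_H$. This yields general members $Y_1,\dots,Y_{N-1}\in|H|$ such that $C:=Y_1\cap\dots\cap Y_{N-1}$ is a smooth integral curve cut out by a regular sequence on $P$ (for $N=1$ one simply takes $C:=P$). Adjunction then gives $\deg(\omega_C)=2g(C)-2=K_PH^{N-1}+(N-1)H^N$, together with $\deg(H|_C)=H^N$ and $\deg(c_1(\cF)|_C)=c_1(\cF)H^{N-1}$.

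Next, tensoring the Koszul resolution of $\cO_C$ in $P$ by $\cF(-H)$ produces the exact complex
\[
0\to\cF(-NH)\to\dots\to\cF(-(i+1)H)^{\oplus\binom{N-1}{i}}\to\dots\to\cF(-H)\to\cF|_C(-H)\to0,
\]
from which the alternating sum of Euler characteristics reads
\[
\chi(\cF|_C(-H))=\sum_{i=0}^{N-1}(-1)^i\binom{N-1}{i}\chi(\cF(-(i+1)H)).
\]
Now, since $\cF$ is $H$-Steiner we have $h^1(\cF(-H))=0$, so Corollary \ref{cCharacteristicLinear} specialises to
\[
\chi(\cF(tH))=\chi(\cF)\binom{N+t}{N}-h^{N-1}(\cF(-NH))\binom{N+t-1}{N}.
\]
Substituting $t=-(i+1)$, the coefficient $\binom{N-i-1}{N}$ vanishes for every $0\le i\le N-1$ while $\binom{N-i-2}{N}$ vanishes for $0\le i\le N-2$ and $\binom{-1}{N}=(-1)^N$ for $i=N-1$. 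After signs, only the $i=N-1$ summand survives and one obtains
\[
\chi(\cF|_C(-H))=h^{N-1}(\cF(-NH)).
\]

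Finally, Riemann--Roch \eqref{RRcurve} on the smooth curve $C$ yields
\[
\chi(\cF|_C(-H))=c_1(\cF)H^{N-1}-\rk(\cF)H^N+\rk(\cF)(1-g(C)),
\]
and substituting the adjunction value of $g(C)$ together with the identity just obtained, solving for $c_1(\cF)H^{N-1}$ gives exactly \eqref{Slope}. The only delicate step is the binomial cancellation in the third paragraph; the geometric ingredients (Bertini, adjunction, the Koszul resolution of a regular sequence, and Riemann--Roch on a curve) are entirely standard.
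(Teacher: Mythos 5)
Your argument is correct, but it takes a genuinely different route from the paper's. The paper proves \eqref{Slope} by descending induction on the dimension: for $N\ge3$ it restricts $\cF$ to a general $Y\in\vert H\vert$, uses Proposition \ref{pHyperplaneSteiner} to know that $\cO_Y\otimes\cF$ is again Steiner with the same invariant $h^{N-1}\big(\cF(-NH)\big)$, and checks via adjunction that \eqref{Slope} on $P$ is equivalent to \eqref{Slope} on $Y$; the base case $N=2$ then follows from the vanishings $\chi(\cF(-H))=0$ and $\chi(\cF(-2H))=-h^1\big(\cF(-2H)\big)$ together with the surface Riemann--Roch formula \eqref{RRsurface}. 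You instead slice down in one step to a general curve $C$ cut out by $N-1$ members of $\vert H\vert$, replacing the restriction statement by the Koszul complex combined with Corollary \ref{cCharacteristicLinear}, which identifies $\chi(\cO_C\otimes\cF(-H))$ with $h^{N-1}\big(\cF(-NH)\big)$, and you conclude with the curve Riemann--Roch \eqref{RRcurve} and adjunction. Your route treats all $N\ge1$ uniformly and needs neither Proposition \ref{pHyperplaneSteiner} nor \eqref{RRsurface}; the paper's induction avoids any binomial bookkeeping and only uses definition-level vanishings on a surface. The one point you should make explicit is that you evaluate Corollary \ref{cCharacteristicLinear} at the negative twists $t=-1,\dots,-N$, so the binomial coefficients must be read as the polynomials $\chi(\cO_{\p N}(t))$ in $t$ (whence $\binom{-1}{N}=(-1)^N$); this is legitimate, since the corollary is obtained from the monad display of $p_*\cF$ as a polynomial identity in $t$, and with that reading your cancellation and the final substitution are exact. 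Both proofs invoke Bertini in the same way, which is precisely why the characteristic-zero/very-ampleness alternative appears among the hypotheses.
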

\begin{proof}
Let $N\ge3$. Each general $Y\in\vert H\vert$ is an $(N-1)$--fold by the Bertini theorem (see \cite[Theorem II.8.18 and Corollaries III.7.9, III.10.9]{Ha2}). Let $\cF_Y:=\cO_Y\otimes\cF$, $\cO_Y(H_Y):=\cO_Y\otimes\cO_P(H)$ and assume that the statement holds on such a $Y$. Thus
$$
(N+1)H^N+K_PH^{N-1}=NH^{N-1}_Y+K_YH^{N-2}_Y
$$
thanks to the adjunction formula on $P$ and $c_1(\cF)H^{N-1}=c_1(\cF_Y)H_Y^{N-2}$. It follows that the equality \eqref{Slope} holds for the bundle $\cF$ on $P$ if and only if it holds for the bundle $\cF_Y$ on $Y$, which is a $H_Y$--Steiner bundle thanks to Proposition \ref{pHyperplaneSteiner}. 

Thus it suffices to handle the case $N\le2$, the case $N=1$ being trivial. If $N=2$, then $\chi(\cF(-H))=0$ and $\chi(\cF(-2H))=-h^1\big(\cF(-2H)\big)$. Equality \eqref{RRsurface} yields
\begin{align*}
h^1\big(\cF(-2H)\big)&=\chi(\cF(-H))-\chi(\cF(-2H))\\
&=c_1(\cF)H^{N-1}-\frac{\rk(\cF)}2((N+1)H^N+K_PH^{N-1}).
\end{align*}
Thus the statement is completely proved.
\end{proof}

We are now ready to prove Theorem \ref{tHypersurfaceSpace} stated in the introduction.

\medbreak
\noindent{\it Proof of Theorem \ref{tHypersurfaceSpace}.}
The cohomology of $\cE$ as coherent sheaf on $\p n$ is as in table 3.
\begin{tiny}
\begin{table}[H]
\label{TableEpq}
\centering
\bgroup
\def\arraystretch{1.5}
\begin{tabular}{cccccccc}
\cline{2-7}
\multicolumn{1}{c}{} & \multicolumn{1}{|c|}{0} &\multicolumn{1}{c|}{0} & \multicolumn{1}{c|}{0} & \multicolumn{1}{c|}{0} & \multicolumn{1}{c|}{0} & \multicolumn{1}{c|}{0} & $q={n+1}$ \\ 
\cline{2-7}
\multicolumn{1}{c}{} & \multicolumn{1}{|c|}{$h^{n}\big(\cE(-(n+1)h)\big)$} &\multicolumn{1}{c|}{0} & \multicolumn{1}{c|}{0} & \multicolumn{1}{c|}{0} & \multicolumn{1}{c|}{0} & \multicolumn{1}{c|}{0} & $q=n$ \\ 
\cline{2-7}
\multicolumn{1}{c}{} & \multicolumn{1}{|c|}{$h^{n-1}\big(\cE(-(n+1)h)\big)$} &\multicolumn{1}{c|}{$h^{n-1}\big(\cE(-nh)\big)$} & \multicolumn{1}{c|}{0} & \multicolumn{1}{c|}{0} & \multicolumn{1}{c|}{0} & \multicolumn{1}{c|}{0} & $q=n-1$ \\ 
\cline{2-7}
\multicolumn{1}{c}{} &\multicolumn{1}{|c|}0 & \multicolumn{1}{c|}{0} & \multicolumn{1}{c|}0 & \multicolumn{1}{c|}{0} & \multicolumn{1}{c|}{0} & \multicolumn{1}{c|}{0} & $2\le q\le n-2$ \\ 
\cline{2-7}
\multicolumn{1}{c}{} &\multicolumn{1}{|c|}{0} & \multicolumn{1}{c|}{0} & \multicolumn{1}{c|}{0} & \multicolumn{1}{c|}{0} & \multicolumn{1}{c|}{$h^1\big(\cE(-h)\big)$} & \multicolumn{1}{c|}{$h^1\big(\cE\big)$} & $q=1$ \\ 
\cline{2-7}
\multicolumn{1}{c}{} &\multicolumn{1}{|c|}{0} & \multicolumn{1}{c|}{0} & \multicolumn{1}{c|}{0} & \multicolumn{1}{c|}{0} & \multicolumn{1}{c|}{0} & \multicolumn{1}{c|}{$h^0\big(\cE\big)$} & $q=0$ \\ 
\cline{2-7}
&$p=-n-1$ & $p=-n$ &$1-n\ \le$ &\hskip-1truemm $p\ \ \le\ -2$& $p=-1$ & $p=0$
\end{tabular}
\egroup
\caption{Values of $h^{q}\big(\cE(ph)\big)$ in the range $-n-1\le p\le 0$}
\end{table}
\end{tiny}
\noindent Thanks to \cite[Beilinson's theorem (strong form)]{An--Ot1} we have two sheaves $\cG(-1)\subseteq\widehat{\cF}$ fitting into exact sequences
\begin{equation}
\label{XXX}
\begin{gathered}
0\longrightarrow\widehat{\cF}\longrightarrow\cO_{\p {n+1}}^{\oplus a}\oplus\Omega_{\p {n+1}}^{1}(1)^{\oplus \quantum}\longrightarrow\cO_{\p {n+1}}^{\oplus b}\longrightarrow0,\\
0\longrightarrow\cO_{\p {n+1}}(-1)^{\oplus b'}\longrightarrow\cO_{\p {n+1}}(-1)^{\oplus a'}\oplus\Omega_{\p {n+1}}^{n}(n)^{\oplus \quantum'}\longrightarrow\cG(-1)\longrightarrow0,
\end{gathered}
\end{equation}
where
\begin{gather*}
a:=h^0\big(\cE\big),\qquad b:=h^1\big(\cE\big),\qquad \quantum:=h^1\big(\cE(-h)\big),\\
a':=h^{n}\big(\cE(-(n+1)h)\big),\qquad b':=h^{n-1}\big(\cE(-(n+1)h)\big),\qquad \quantum':=h^{n-1}\big(\cE(-nh)\big)
\end{gather*}
and such that $\cE\cong\widehat{\cF}/\cG(-1)$. Thus we have an exact sequence
$$
0\longrightarrow\cG(-1)\mapright\varphi\widehat{\cF}\mapright q\cE\longrightarrow0
$$
By construction $\widehat{\cF}$ is a vector bundle and $\cG$ is torsion free, thus generically non--zero. Thus both $\cF:=\widehat{\cF}^\vee$ and $\cG$ are Steiner with respect to $\cO_{\p{n+1}}(1)$ and
$$
h^{n}\big(\cF(-n-1)\big)=\quantum,\qquad h^{n}\big(\cG(-n-1)\big)=\quantum'
$$
thanks to Example \ref{eKernel}. In particular we have proved the existence of sequence \eqref{seqSteinerInstanton}. Since the support of $\cE$ is $X$ and $\varphi$ is injective, it follows that $\rk(\cG)=\rk(\cF)$.

If $x\not\in X$, then $\cG_x\cong\cF_x^\vee$. If $x\in X$, then $\dim(\cE_x)=\dim(\cO_{X,x})=n$ because $\cE$ is generically non--zero on $X$ by hypothesis. Since $\cE$ is locally Cohen--Macaulay, it follows that $\depth_{\cO_{\p{n+1},x}}\cE_x=\depth_{\cO_{X,x}}\cE_x=n$ (see  \cite[Exercise 1.2.26 b)]{Br--He}). Thus the Auslander--Buchsbaum formula yields $\pd_{\cO_{\p {n+1},x}}\cE_x=1$, hence $\pd_{\cO_{\p {n+1},x}}\cG_x=0$ (e.g. see \cite[Exercise III.6.5]{Ha2}). We conclude that $\cG$ is locally free on $\p{n+1}$. 

Corollary \ref{cSteinerSpace} implies $\det(\cF)\cong\cO_{\p{n+1}}(\quantum)$ and $\det(\cG)\cong\cO_{\p{n+1}}(\quantum')$, hence 
$$
\det(\varphi)\colon\det(\cG)(-\rk(\cG))\to\det(\cF)^{-1}
$$
represents $\cO_{\p{n+1}}(\rk(\cE)d)$ in $\Pic(\p{n+1})\cong \bZ\cO_{\p{n+1}}(1)$. Since the support of $\cE$ is the integral hypersurface $X$, it follows that $\det(\varphi)$ is necessarily the form defining $X$ to the power $\rk(\cE)$.
\qed
\medbreak

Finally, we prove the uniqueness of sequence \eqref{seqSteinerInstanton}. 

\begin{proposition}
\label{pUnique}
Let $X\subseteq\p{n+1}$ with $n\ge 2$ be an integral hypersurface of degree $d$ and $\cO_X(h):=\cO_{X}\otimes\cO_{\p {n+1}}(1)$.

If $\cE$ and $\cE_0$ are locally Cohen--Macaulay $h$--linear sheaves on $X$ fitting into sequences
\begin{equation}
\label{seqMorphism}
\begin{gathered}
0\longrightarrow\cG(-1)\mapright\varphi\cF^\vee\mapright{q}\cE\longrightarrow0,\\
0\longrightarrow\cG_0(-1)\mapright{\varphi_0}\cF_0^\vee\mapright{q_0}\cE\longrightarrow0
\end{gathered}
\end{equation}
where $\cG$, $\cF$, $\cG_0$, $\cF_0$ are Steiner bundles on $\p{n+1}$, then each morphism $\kappa\colon \cE\to\cE_0$ can be lifted in a unique way to a commutative diagram
\begin{equation}
\label{DiagramComplete}
\begin{CD}
0@>>>\cG(-1)@>\varphi>>   \cF^\vee@> q>> \cE@>>>0\ \ \\
@.@V \vartheta(-1) VV @V \psi VV @V \kappa VV \\
0@>>>\cG_0(-1)@>{\varphi_0}>>   \cF_0^\vee@>q_0>> \cE_0@>>>0\ .\\
\end{CD}
\end{equation}
Moreover, if $\kappa$ is an isomorphism, then the same is true for both $\psi$ and $\vartheta$.
\end{proposition}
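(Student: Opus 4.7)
The plan is a standard lifting-of-resolutions argument. Applying $\Hom(\cF^\vee,-)$ to the second short exact sequence of \eqref{seqMorphism} produces the exact sequence
\begin{equation*}
\Hom(\cF^\vee,\cG_0(-1))\lra\Hom(\cF^\vee,\cF_0^\vee)\lra\Hom(\cF^\vee,\cE_0)\lra\Ext^1(\cF^\vee,\cG_0(-1)).
\end{equation*}
If both outer groups vanish, then $\kappa\circ q\in\Hom(\cF^\vee,\cE_0)$ has a unique preimage $\psi\colon\cF^\vee\to\cF_0^\vee$ with $q_0\circ\psi=\kappa\circ q$. Given such a $\psi$, the composition $\psi\circ\varphi\colon\cG(-1)\to\cF_0^\vee$ satisfies $q_0\circ\psi\circ\varphi=\kappa\circ q\circ\varphi=0$, so it factors through $\ker(q_0)=\im(\varphi_0)$. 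Because $\varphi_0$ is injective, this factorization is itself unique and provides the desired $\vartheta(-1)\colon\cG(-1)\to\cG_0(-1)$ completing diagram \eqref{DiagramComplete}.

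To verify the two vanishings, I would use Corollary \ref{cSteinerSpace} to present $\cF$ as the cokernel of some injective map $\cO_{\p{n+1}}(-1)^{\oplus m_1}\to\cO_{\p{n+1}}^{\oplus m_0}$. Since $\cF$ is locally free, dualizing yields the short exact sequence
\begin{equation*}
0\lra\cF^\vee\lra\cO_{\p{n+1}}^{\oplus m_0}\lra\cO_{\p{n+1}}(1)^{\oplus m_1}\lra 0.
\end{equation*}
Applying $\Hom(-,\cG_0(-1))$ and chasing the resulting long exact sequence reduces both $\Hom(\cF^\vee,\cG_0(-1))$ and $\Ext^1(\cF^\vee,\cG_0(-1))$ to cohomology groups of the form $h^j(\cG_0(-sH))$ with $s\in\{1,2\}$ and $j\le 2$. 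Each of these vanishes by Proposition \ref{pCharacterizationSteiner} applied to the Steiner bundle $\cG_0$; the hypothesis $n\ge 2$, i.e.\ $N=n+1\ge 3$, is precisely what is needed to secure $h^1(\cG_0(-2H))=h^2(\cG_0(-2H))=0$.

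For the isomorphism assertion, assume that $\kappa$ is invertible. Applying the construction above to $\kappa^{-1}$ produces a lift $(\psi',\vartheta'(-1))$ in the opposite direction. The pair $(\psi\circ\psi',\vartheta(-1)\circ\vartheta'(-1))$ then lifts $\mathrm{id}_{\cE_0}$, as does the trivial pair $(\mathrm{id}_{\cF_0^\vee},\mathrm{id}_{\cG_0(-1)})$; the uniqueness already proved forces these two lifts to coincide. The symmetric argument gives $\psi'\circ\psi=\mathrm{id}_{\cF^\vee}$ and $\vartheta'(-1)\circ\vartheta(-1)=\mathrm{id}_{\cG(-1)}$, so both $\psi$ and $\vartheta$ are isomorphisms.

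The only delicate point is the vanishing of $\Hom$ and $\Ext^1$ used above; once this is in hand, everything else is formal. The crux is that the Steiner axioms on $\cG_0$, when extended to suitable twists via Proposition \ref{pCharacterizationSteiner}, are exactly strong enough to annihilate the cohomology groups controlling these two $\Ext$ terms.
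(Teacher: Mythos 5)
Your proposal is correct and follows essentially the same route as the paper: lift $\kappa\circ q$ through $q_0$ using the vanishing of $\Hom(\cF^\vee,\cG_0(-1))$ and $\Ext^1(\cF^\vee,\cG_0(-1))$, obtain $\vartheta$ by factoring $\psi\circ\varphi$ through $\ker(q_0)=\im(\varphi_0)$, and get the isomorphism statement by lifting $\kappa^{-1}$ and invoking uniqueness. The only cosmetic difference is that you compute the two $\Ext$ groups from the dualized Steiner presentation of $\cF$, while the paper tensors the presentation \eqref{seqSteinerSpace} of $\cF$ by $\cG_0(-1)$ and bounds $h^i\big(\cF\otimes\cG_0(-1)\big)$ — these are the same groups and the same Steiner vanishings $h^0\big(\cG_0(-1)\big)=h^1\big(\cG_0(-1)\big)=h^1\big(\cG_0(-2)\big)=h^2\big(\cG_0(-2)\big)=0$, available because $N=n+1\ge3$.
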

\begin{proof}
By applying $\Hom_{\p{n+1}}\big(\cF^\vee,\cdot\big)$ to the second sequence \eqref{seqMorphism}, we obtain the exact sequence
\begin{align*}
H^0\big(\cF\otimes\cG_0(-1)\big)\longrightarrow H^0\big(\cF\otimes\cF_0^\vee\big)\longrightarrow H^0\big(\cF\otimes\cE_0\big)\longrightarrow H^1\big(\cF\otimes\cG_0(-1)\big).
\end{align*}
Recall that $\cF$ is a Steiner bundle on $\p{n+1}$, hence the cohomology of sequence \eqref{seqSteinerSpace} tensored by $\cG_0$ returns
\begin{align*}
h^i\big(\cF\otimes\cG_0(-1)\big)\le h^0\big(\cF\big)h^i\big(\cG_0(-1)\big)+h^{n}\big(\cF(-n-1)\big)h^{i+1}\big(\cG_0(-2)\big)
\end{align*}
Thus $h^i\big(\cF\otimes\cG_0(-1)\big)=0$ for $0\le i\le 1$, because $\cG_0$ is Steiner on $\p{n+1}$, whence $h^i\big(\cG_0(-t)\big)=0$ for $0\le i,t\le 2$. 

We deduce that $\kappa\colon\cE\to \cE_0$ can be lifted in a unique way to $\psi\colon \cF^\vee\to\cF_0^\vee$. 
Since $0=q\circ\varphi=q_0\circ\psi\circ\varphi$, it follows that $\im(\psi\circ\varphi)\subseteq\ker(q_0)=\im(\varphi_0)\cong\cG_0(-1)$, hence we have a uniquely determined induced morphism $\vartheta(-1)\colon \cG(-1)\to \cG_0(-1)$.

If $\kappa$ is an isomorphism, then $\kappa^{-1}$ can be also lifted uniquely to another morphism $\psi_0\colon \cF_0^\vee\to\cF^\vee$. Notice that $\psi\circ\psi_0\colon \cF_0^\vee\to\cF_0^\vee$ and $\psi_0\circ\psi\colon\cF^\vee\to\cF^\vee$ are liftings of the identities of $\cE_0$ and $\cE$ respectively. Since the identities on $\cF_0$ and $\cF$ are also liftings of such maps, it follows that the same argument as above also implies that  $\psi$ and $\psi_0$ are inverse each other, i.e. $\psi$ is an isomorphism. It follows that also $\vartheta_0(-1)$ is an isomorphisomorphismism, thanks to the Five's lemma.
\end{proof}

\section{Steiner representation of surfaces in $\p3$}
\label{sDeterminantal}

As pointed out in the Introduction, there are no $h$--instanton line bundles on smooth hypersurfaces $X\subset\p{n+1}$ with $n\ge3$ thanks to \cite[Proposition 8.2]{An--Ca1}. Because of this, one can ask for Steiner representations only for smooth surfaces in $\p3$. In this case we are able to prove Theorem \ref{tDeterminantal} which generalizes \cite[Proposition 6.2]{Bea}.

\medbreak
\noindent{\it Proof of Theorem \ref{tDeterminantal}.}
Let $\cO_X(h):=\cO_X\otimes\cO_{\p 3}(1)$ and assume that $X$ contains a curve $C$ as in the statement. If $\cE:=\cO_X(C-\quantum h)$, then the value of $\deg(C)$ in the statement and the definition of $\cE$ return
$$
2c_1(\cE)h=d(d-1)=(3h+K_X)h.
$$
The value of $p_a(C)$ in the statement and the adjunction formula on $X$ return
$$
C^2=d\quantum^2 +\quantum(d^2-d-2)+{d\choose3},
$$
hence equality \eqref{RRsurface} yields $\chi(\cE(-h))=\quantum$. Finally
\begin{gather*}
h^0\big(\cE(-h)\big)=h^0\big(\cO_X(C-(\quantum+1)h)\big)=0,\\
h^2\big(\cE(-2h)\big)=h^2\big(\cO_X(C-(\quantum+2)h)\big)=h^0\big(\cO_X((d+\quantum-2)h-C)\big)=0.
\end{gather*}
Thus \cite[Theorem 1.6]{An--Ca1} yields that $\cE$ is a $h$--instanton line bundle on $X$ with quantum number $\quantum$ and the existence of the Steiner determinantal representation then follows from Corollary \ref{cHypersurfaceSpace}. Moreover, Corollary \ref{cHypersurface} implies that the size $s$ of such a representation is $d+2\quantum$

Conversely, assume that a determinantal representation $\varphi\colon\cG(-1)\to\cF^\vee$ of size $s=\rk(\cG)=\rk(\cF)$ exists. Thus $\det(\varphi)$ is the form defining $X$. Corollary \ref{cHypersurface} yields the existence of a locally Cohen--Macaulay $h$--instanton sheaf $\cE$ with $\rk(\cE)=1$, quantum number $\quantum=h^n\big(\cG(-n-1)\big)=h^n\big(\cF(-n-1)\big)$ and $s=d+2\quantum$. Since $X$ is smooth, it follows that $\cE$ is actually a line bundle as pointed out in the introduction. Thanks to Proposition \ref{pRegularity} we know that $\reg(\cE)\le \quantum$. In particular $\cE(\quantum h)$ is globally generated, hence the hypothesis on $\field$ and the Bertini theorem yield the existence of a smooth curve $C\subseteq X$ such that $\cO_X(C)\cong\cE(\quantum h)$. All the assertion of the statement then follow from such a definition of $C$ and the properties of instanton sheaves.
\qed
\medbreak

\begin{remark}
\label{rDeterminantal}
Equality \eqref{RRsurface} and adjunction formula on $X$ imply that equalities \eqref{InvariantsC} are equivalent to
\begin{gather*}
\deg(C)=\frac12{d(d+2\quantum-1)},\qquad C^2=d\quantum^2 +\quantum(d^2-d-2)+{d\choose3}.
\end{gather*}
This concludes the remark.
\end{remark}

\begin{remark}
\label{rSmoothCurve}
Notice that the smooth curve $C$ is necessarily connected if $d\ge3$, hence integral. Indeed we trivially have $h^0\big(\cO_X(-C)\big)=0$. Moreover, 
$$
h^1\big(\cO_X(-C)\big)=h^1\big(\cE^\vee(-\quantum h)\big)=h^1\big(\cE((\quantum+d-4) h)\big).
$$
Since $\quantum+d-4\ge\quantum-1$ and $\reg(\cE)\le \quantum$, the dimension on the right is zero. Thus the cohomology of sequence \eqref{seqStandard} with $Y:=C$ and $P:=X$ yields $h^0\big(\cO_C\big)=1$, hence $C$ is connected because it is smooth.

On the other hand, let $d=2$: thus $X\cong\p1\times\p1$ and we denote by $\ell_1$ and $\ell_2$ the standard generators of $\Pic(X)$, so that $h=\ell_1+\ell_2$. In this case the only $h$--instanton line bundles with quantum number $\quantum$ are $\cO_X((\quantum+1)\ell_i)$ for $i=1,2$. In particular, the smooth curve $C$ is not connected unless $\quantum=0$. This concludes the remark.
\end{remark}

In what follows we will inspect the existence of Steiner representations of smooth surfaces of low degree.

\begin{corollary}
Let $X\subseteq\p3$ be a smooth surface of degree $d=2,3$. Assume that the characteristic of $\field$ is $0$.

The surface $X$ has a Steiner determinantal representation of size $d+2\quantum$ for each $\quantum\ge0$.
\end{corollary}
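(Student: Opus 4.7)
The plan is to invoke the ``if'' direction of Theorem \ref{tDeterminantal}: for each $\quantum\ge 0$, it suffices to produce a smooth curve $C\subseteq X$ of degree $\frac{1}{2}d(d+2\quantum-1)$ having the arithmetic genus prescribed by \eqref{InvariantsC} and satisfying the two vanishings $h^0\big(\cO_X(C-(\quantum+1)h)\big)=h^0\big(\cO_X((d+\quantum-2)h-C)\big)=0$. Equivalently, I aim to construct a locally Cohen--Macaulay $h$--instanton line bundle $\cE$ of quantum number $\quantum$ on $X$ and then take $C$ to be a smooth member of $|\cE(\quantum h)|$; such a member exists by Bertini (available since $\mathrm{char}\,\field=0$) once this linear system is sufficiently free, and Proposition \ref{pRegularity} gives $\reg(\cE)\le\quantum$, hence $\cE(\quantum h)$ is globally generated.

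For $d=2$ the surface is $X\cong\p1\times\p1$ with $\Pic(X)=\bZ\ell_1\oplus\bZ\ell_2$ and $h=\ell_1+\ell_2$. Following Remark \ref{rSmoothCurve} I would take $\cE:=\cO_X\big((\quantum+1)\ell_1\big)$ (or the symmetric choice with $\ell_2$). The four instanton axioms for $\cE$ reduce, by the K\"unneth formula on $\p1\times\p1$, to elementary statements about $H^\bullet$ of line bundles on $\p1$ and can be verified by direct inspection. The degree, self--intersection and arithmetic genus of a general $C\in|\cE(\quantum h)|$ are read off from intersection numbers and the adjunction formula on $\p1\times\p1$; by Remark \ref{rDeterminantal} these match \eqref{InvariantsC}. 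The two required vanishings reduce to a further K\"unneth computation on the product.

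For $d=3$ I realize the smooth cubic surface $X$ as the blowup of $\p2$ at six general points $p_1,\ldots,p_6$ with exceptional divisors $E_i$ and pulled--back hyperplane class $\ell$, so that $\Pic(X)=\bZ\ell\oplus\bigoplus_i\bZ E_i$, $h=3\ell-\sum_iE_i$ and $K_X=-h$. For each $\quantum\ge 0$ I would single out a line bundle $\cE:=\cO_X\big(a\ell-\sum_i b_iE_i\big)$ whose twists $\cE(-h)$ and $\cE(-2h)$ both have $h^0=0$ and $h^2=0$ (the latter via Serre duality $h^2(\cF)=h^0(\cF^\vee\otimes K_X)$) and whose two $H^1$'s both equal $\quantum$. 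The Euler characteristic on the blowup is
\[
\chi\big(\cO_X(a\ell-\textstyle\sum_i b_iE_i)\big)=\binom{a+2}{2}-\sum_i\binom{b_i+1}{2},
\]
so the cohomological conditions translate into a finite system of equations in $(a,b_1,\ldots,b_6)$; genericity of the $p_i$ is used to rule out spurious sections coming from cubic or conic constraints. Bertini then furnishes a smooth $C\in|\cE(\quantum h)|$, and Theorem \ref{tDeterminantal} closes the argument.

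The main technical obstacle is the $d=3$ case: pinning down, for each $\quantum$, an explicit integer vector $(a,b_1,\ldots,b_6)$ so that all four cohomological instanton conditions hold simultaneously and the numerical invariants of the associated $C$ match \eqref{InvariantsC} precisely. This is a bookkeeping argument in binomial identities and standard facts about del Pezzo surfaces of degree three, but it is what makes the corollary concrete.
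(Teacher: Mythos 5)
Your $d=2$ argument is essentially the paper's: the bundles $\cO_X\big((\quantum+1)\ell_i\big)$ of Remark \ref{rSmoothCurve} are exactly what the paper invokes, and routing through a smooth member of $\vert\cE(\quantum h)\vert$ and the ``if'' direction of Theorem \ref{tDeterminantal}, rather than passing directly from the instanton line bundle to Corollary \ref{cHypersurfaceSpace}, is only a harmless detour.

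The $d=3$ case is where the proposal has a genuine gap, and you flag it yourself: you never exhibit, for a given $\quantum\ge0$, a class $a\ell-\sum_i b_iE_i$ with $h^0\big(\cE(-h)\big)=h^2\big(\cE(-2h)\big)=0$ and $h^1\big(\cE(-h)\big)=h^1\big(\cE(-2h)\big)=\quantum$, nor do you prove that such a class exists on \emph{every} smooth cubic surface. That existence statement is precisely the content of the corollary, so deferring it as ``bookkeeping'' leaves the proof incomplete; moreover your appeal to ``genericity of the $p_i$'' is not available, since the statement concerns all smooth cubics and the only genericity they all enjoy is the general-position condition on the six points. The paper sidesteps this entirely by quoting Gruson--Peskine \cite[Corollaire 2.3]{Gr--Pe}: every smooth cubic contains a smooth connected curve $C$ with $\deg(C)=3(\quantum+1)$ and $p_a(C)=\frac{\quantum}2(3\quantum+1)$, which are the invariants \eqref{InvariantsC} for $d=3$. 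The two vanishings required by Theorem \ref{tDeterminantal} are then checked by a short degree--genus argument: a nonzero section of $\cO_X\big(C-(\quantum+1)h\big)$ would force $C\in\vert(\quantum+1)h\vert$ because the degrees agree, hence $C$ would be a complete intersection of $X$ with a surface of degree $\quantum+1$ and would have genus $\frac{3\quantum}2(\quantum+1)+1$, contradicting the value of $p_a(C)$; the vanishing of $h^0\big(\cO_X((\quantum+1)h-C)\big)$ is excluded similarly. If you wish to keep the blow-up approach, you must actually produce the divisor classes for every $\quantum$ and verify both $h^1$'s (not just the two vanishings), or else import an existence theorem for curves on cubic surfaces such as the one the paper uses.
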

\begin{proof}
If $d=2$, then we know from Remark \ref{rSmoothCurve} that $X$ supports two $h$--instanton line bundles with quantum number $\quantum$ for each $\quantum\ge0$, hence it has a Steiner determinantal representation of size $2+2\quantum$ (see the introduction).

Let $d=3$. Thanks to \cite[Corollaire 2.3]{Gr--Pe}, $X$ contains a smooth connected curve $C$  such that
$$
\deg(C)=3(\quantum+1),\qquad p_a(C)=\frac{\quantum}2(3\quantum+1).
$$
If $h^0\big(\cO_X(C-(\quantum+1)h)\big)\ne0$, then $C\in\vert(\quantum+1)h\vert$ because $\deg(C)=3(\quantum+1)$. In particular $C$ is the complete intersection of $X$ with a surface of degree $\quantum+1$, hence the adjunction formula on $\p3$ would imply
$$
p_a(C)=\frac{3\quantum}2(\quantum+1)+1,
$$
a contradiction. Thus we deduce $h^0\big(\cO_X(C-(\quantum+1)h)\big)=0$. A similar argument also yields $h^0\big(\cO_X((\quantum+1)h-C)\big)=0$. 

The statement then follows from Theorem \ref{tDeterminantal}.
\end{proof}

The case of smooth surfaces of degree $d\ge4$ is completely different and in the last part of this section we focus on the case $d=4$. 

\begin{corollary}
\label{cDeterminantal}
Let $X\subseteq\p3$ be a smooth quartic surface. Assume that the characteristic of $\field$ is $0$.

The surface $X$ has a Steiner determinantal representation of size $4+2\quantum$ if and only if it contains a smooth connected curve $C$ with
\begin{equation}
\label{InvariantCurve}
\deg(C)=4\quantum+6,\qquad p_a(C)=2\quantum^2+5\quantum+3,
\end{equation}
and,  when $\quantum=0,2$, such that  $h^0\big(\cO_X((\quantum+2)h-C)\big)=0$.
\end{corollary}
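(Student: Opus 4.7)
The plan is to specialize Theorem \ref{tDeterminantal} to $d=4$, using that $X$ is a K3 surface so $K_X=0$. First I would verify that the invariants \eqref{InvariantsC} collapse to \eqref{InvariantCurve}: the degree is immediate from $\tfrac12 d(d+2\quantum-1)=4\quantum+6$, and Remark \ref{rDeterminantal} together with adjunction on $X$ gives $C^2=4\quantum^2+10\quantum+4$, hence $p_a(C)=2\quantum^2+5\quantum+3$. Connectedness of $C$ is automatic for $d=4\ge 3$ by Remark \ref{rSmoothCurve}. The remaining task is to show that, given a smooth connected curve $C\subseteq X$ with these invariants, the two vanishings of Theorem \ref{tDeterminantal}, namely $h^0(\cO_X(C-(\quantum+1)h))=0$ and $h^0(\cO_X((\quantum+2)h-C))=0$, reduce exactly to the single condition stated in the corollary.

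For the first vanishing I would argue it is always automatic. Suppose that $h^0(\cO_X(C-(\quantum+1)h))\ne 0$, so $C\sim D+(\quantum+1)h$ for some effective divisor $D$. A direct computation on the K3 surface $X$ yields
\[
D\cdot h=2,\qquad D^2=-2(\quantum+2),\qquad C\cdot D=C^2-(\quantum+1)(C\cdot h)=-2.
\]
Writing $D=\sum n_iD_i$ as a positive combination of distinct integral curves, the negativity $C\cdot D<0$ forces $C\cdot D_i<0$ for some index $i$. Since $C$ is smooth and irreducible, this is possible only if $C=D_i$; but then $C\cdot D_i=C^2=4\quantum^2+10\quantum+4>0$, a contradiction. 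Hence the first vanishing holds without any extra hypothesis.

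For the second vanishing I would carry out a case analysis, which is the technical core. If it fails, there is an effective $D'\sim(\quantum+2)h-C$ with $D'\cdot h=2$ and $(D')^2=-2(\quantum+2)$ by the same computation. I then enumerate effective divisors of $h$-degree $2$ on the smooth quartic $X$: any integral curve in $\p3$ of degree $2$ is a smooth plane conic, yielding $(D')^2=-2$; a double line $2L$ yields $(D')^2=4L^2=-8$; a sum $L_1+L_2$ of two distinct lines yields $(D')^2=-4+2\,L_1\cdot L_2\ge -4$. Matching against $-2(\quantum+2)$ shows that the equation admits a solution only when $\quantum=0$ (two skew lines) or $\quantum=2$ (a double line), so for every other $\quantum\ge 0$ the second vanishing is automatic.

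Combining these three steps, Theorem \ref{tDeterminantal} specialized to $d=4$ becomes exactly the statement of Corollary \ref{cDeterminantal}: for $\quantum\notin\{0,2\}$ the numerical invariants of $C$ already suffice, while for $\quantum\in\{0,2\}$ one must additionally impose the single nontrivial vanishing $h^0(\cO_X((\quantum+2)h-C))=0$. The main obstacle is precisely the classification of effective divisors of $h$-degree $2$ on $X$ in the third step; one must be sure that no subtler configurations can produce such divisors, which reduces to the basic fact that every effective divisor on a surface is a non-negative integer combination of distinct integral curves.
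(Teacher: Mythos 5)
Your proposal is correct and follows essentially the same route as the paper: specialize Theorem \ref{tDeterminantal} to $d=4$, observe that $h^0\big(\cO_X(C-(\quantum+1)h)\big)=0$ is automatic because an effective $D\in\vert C-(\quantum+1)h\vert$ would give $C\cdot D=-2$ against the irreducibility of $C$, and classify effective divisors of $h$--degree $2$ on the quartic to see that $h^0\big(\cO_X((\quantum+2)h-C)\big)=0$ can only fail for $\quantum=0$ (two skew lines) or $\quantum=2$ (a double line). The only cosmetic difference is that the paper runs the degree--$2$ classification through $p_a(D)=-\quantum-1$ rather than through $(D')^2=-2(\quantum+2)$, which is equivalent on the K3 surface $X$.
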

\begin{proof}
If $X$ has a Steiner determinantal representation of size $4+2\quantum$, then it contains a smooth curve $C$ satisfying equalities \eqref{InvariantCurve} and such that $h^0\big(\cO_X(C-(\quantum+1)h)\big)=h^0\big(\cO_X((\quantum+2)h-C)\big)=0$, thanks to Theorem \ref{tDeterminantal}. Moreover such a curve $C$ is also connected, hence irreducible, by Remark \ref{rSmoothCurve}.

Conversely, assume that $X$ contains a smooth connected curve $C$ satisfying equalities \eqref{InvariantCurve} and, when $\quantum=0,2$, also $h^0\big(\cO_X((\quantum+2)h-C)\big)=0$: notice that the latter equality is equivalent to $\vert (\quantum+2)h-C\vert=\emptyset$ because $C\not\in\vert (\quantum+2)h\vert$. 

We first claim that such a latter equality actually holds regardless of $\quantum\ge0$. Indeed, assume that $D\in\vert (\quantum+2)h-C\vert$. Then $\deg(D)=2$ and $p_a(D)=-\quantum-1$ by adjunction on $X$, hence $D$ is either the union of two disjoint lines or a double line. In the former case, $D^2=-4$, i.e. $p_a(D)=-1$ whence $\quantum=0$: in the latter case, $D^2=-8$, i.e. $p_a(D)=-3$ whence $\quantum=2$. Finally, regardless of the value of $\quantum$, if $D\in\vert C-(\quantum+1)h\vert$, then $DC=-2$, contradicting the irreducibility of $C$. Thus we also have $h^0\big(\cO_X(C-(\quantum+1)h)\big)=0$. The statement then follows from Theorem \ref{tDeterminantal}.
\end{proof}

In particular, when $\quantum=0$, we recover \cite[Corollary 6.6]{Bea}, i.e. $X$ has a determinantal representation if and only if it contains a smooth connected curve $C$ with $\deg(C)=6$, $C^2=4$, $p_a(C)=3$ not lying on a quadric surface. 

Now let us deal with  Steiner determinantal representations of size larger than $4$ of smooth quartics. As we already mentioned the picture is quite  different with respect to the cases of quadrics and cubics.

\begin{proposition}
\label{pQuarticDeterminantal}
Let $X\subseteq\p3$ be a very general smooth determinantal quartic surface over $\field=\bC$.

The surface $X$ has a Steiner determinantal representation of size $4+2\quantum$ if and only if  $\quantum=10\lambda(\lambda+1)$ for $\lambda\ge0$.
\end{proposition}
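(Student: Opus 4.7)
The plan is to combine Corollary \ref{cDeterminantal} (with $d=4$) with a Diophantine analysis on the Picard lattice of $X$, and then invoke Saint-Donat's theorem for K3 surfaces to realise the resulting class by a smooth irreducible curve.

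First I would pin down $\Pic(X)$. The locus of determinantal quartics is a Noether--Lefschetz divisor inside the $19$-dimensional moduli of smooth K3 quartics, and for $X$ very general in that divisor one expects $\Pic(X)=\bZ h\oplus\bZ C_0$, where $C_0$ is the smooth connected curve of degree $6$ and arithmetic genus $3$ (not lying on any quadric) coming from the original linear determinantal representation \cite[Corollary 6.6]{Bea} (our $\lambda=0$ case). By K3 adjunction and B\'ezout, the intersection form in the basis $(h,C_0)$ is $\left(\begin{smallmatrix}4 & 6\\6 & 4\end{smallmatrix}\right)$, of discriminant $-20$. The assertion $\rk\Pic(X)=2$ is standard: any further class would cut out a proper Noether--Lefschetz subvariety of the family of determinantal quartics.

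Next I would handle the ``only if'' implication. By Corollary \ref{cDeterminantal}, the existence of a Steiner determinantal representation of size $4+2\quantum$ amounts (using the K3 adjunction $C^2=2p_a(C)-2$) to a smooth connected curve $C$ with $C\cdot h=4\quantum+6$ and $C^2=4\quantum^2+10\quantum+4$. Writing $C\sim ah+bC_0$, the degree equation reads $2a+3b=2\quantum+3$, forcing $b$ odd; setting $b=2\lambda+1$ and $a=\quantum-3\lambda$ and substituting into $C^2=4(a^2+3ab+b^2)$, an elementary simplification yields $\quantum=10\lambda(\lambda+1)$.

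Conversely, for $\quantum=10\lambda(\lambda+1)$ with $\lambda\ge 0$, consider
\[
C_\lambda:=(10\lambda^2+7\lambda)\,h+(2\lambda+1)\,C_0.
\]
A direct intersection computation shows $C_\lambda\cdot h>0$ and $C_\lambda\cdot C_0>0$; moreover the quadratic form $2m^2+6mn+2n^2$ never equals $-1$ (it is even), and $m^2+3mn+n^2=0$ has no non-trivial integer solutions (its discriminant $5$ is not a square), so no $v\in\Pic(X)$ satisfies $v^2\in\{-2,0\}$. Hence $X$ carries no $(-2)$-curves and no elliptic pencils, $C_\lambda$ is ample, and Saint-Donat's theorem for polarised K3 surfaces produces a smooth irreducible member of $|C_\lambda|$. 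The two vanishings $h^0\big(\cO_X(C_\lambda-(\quantum+1)h)\big)=h^0\big(\cO_X((\quantum+2)h-C_\lambda)\big)=0$ are then forced by the argument in the proof of Corollary \ref{cDeterminantal}: any non-zero section would give an effective divisor of degree $2$ and arithmetic genus $-\quantum-1$, hence supported on lines of $X$; but $4m+6n=1$ has no integer solutions, so $X$ contains no line.

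I expect the most delicate step to be the identification of $\Pic(X)$, since the precise Noether--Lefschetz / monodromy argument showing $\Pic(X)=\bZ h\oplus\bZ C_0$ for the very general determinantal quartic requires some care; everything else (the Diophantine reduction, the ampleness check in the rank-$2$ lattice, and the Saint-Donat invocation) is then routine.
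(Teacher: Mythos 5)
Your argument is correct and its skeleton coincides with the paper's: identify the Picard lattice of the very general determinantal quartic as $\Lambda=\bZ h\oplus\bZ C_0$ with Gram matrix $\left(\begin{smallmatrix}4&6\\6&4\end{smallmatrix}\right)$, run the same Diophantine reduction (the paper gets $\quantum=5(y^2-1)/2$ with $y$ odd, you get the equivalent $\quantum=10\lambda(\lambda+1)$ after substituting $b=2\lambda+1$), and then feed a smooth connected member of $|\lambda(10\lambda+7)h+(2\lambda+1)C_0|$ into Corollary \ref{cDeterminantal}. Where you genuinely diverge is the realization step in the converse: the paper quotes the global generation of $\cO_X(C_0)$ from \cite[Corollary 3.7]{Cas}, deduces very ampleness of the sum with a multiple of $h$, and applies Bertini; you instead observe that the lattice $\Lambda$ represents neither $-2$ nor $0$, conclude that $C_\lambda$ is ample by Nakai--Moishezon/Hodge index and that $|C_\lambda|$ is base-point free by Saint--Donat, and get smoothness and irreducibility that way. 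Your route is more self-contained (no appeal to \cite{Cas}) and, as a bonus, your ``no lines, no conics of negative genus'' argument gives the vanishing $h^0\big(\cO_X((\quantum+2)h-C)\big)=0$ needed in the case $\quantum=0$ more explicitly than the paper's terse remark that $\quantum=0$ forces $C=C_0$. One caveat on the step you yourself flag as delicate: for the ``only if'' direction you need $\Pic(X)$ to equal $\Lambda$ on the nose, not merely to have rank $2$ (a finite-index overlattice would change the Diophantine analysis), so the sentence ``any further class would cut out a proper Noether--Lefschetz subvariety'' must be sharpened to exclude overlattices as well; this is exactly what the paper's citation of \cite[Theorem II.3.1]{Lop} provides, and you should invoke that (or an equivalent monodromy argument) rather than calling the rank statement standard.
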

\begin{proof}
If $X\subseteq\p3$ is a determinantal smooth quartic surface, then its Picard lattice contains the even sublattice  $\Lambda:=\bZ h\oplus\bZ C_0$, where $\cO_X(h):=\cO_X\otimes\cO_{\p3}(1)$ and $C_0$ is a smooth connected curve such that $\deg(C_0)=6$, $C_0^2=4$ and $p_a(C_0)=3$ (see Corollary \ref{cDeterminantal}). Thanks to \cite[Theorem II.3.1]{Lop} the Picard lattice of the very general determinantal smooth quartic surface coincides with $\Lambda$.

If $X$ has a Steiner determinantal representation of size $4+2\quantum$, then we can find a smooth connected curve $C\subseteq X$ satisfying equalities \eqref{InvariantCurve} (see also Remark \ref{rDeterminantal}). In particular, if $C$ is linearly equivalent to $xh+yC_0$, then
$$
4\quantum+6=4x+6y,\qquad 4\quantum^2+10\quantum+4=4x^2+12xy+4y^2.
$$
Thus $\quantum=5(y^2-1)/2$: since $y\in\bZ$, it follows that $y=2\lambda+1$, hence $\quantum=10\lambda(\lambda+1)$ (and $x=\lambda(10\lambda+7)$).

Conversely, let $\quantum=10\lambda(\lambda+1)$ for $\lambda\ge0$: notice that $\quantum\ne2$ and $\quantum=0$ if and only if $C=C_0$. Moreover, $\cO_X(C_0)$ is globally generated (see \cite[Corollary 3.7]{Cas}), hence $\cO_X(\lambda(10\lambda+7)h+(2\lambda+1)C_0)$ is very ample (see \cite[Exercise II.7.5 (d)]{Ha2}). It follows that the general element $C\in \vert \lambda(10\lambda+7)h+(2\lambda+1)C_0\vert$ is a smooth connected curve satisfying equalities \eqref{InvariantCurve}. Corollary \ref{cDeterminantal} then yields the statement.
\end{proof}

\begin{proposition}
\label{pExistenceQuartic}
Let $\field=\bC$.

If $\quantum\ge0$, there exists a smooth quartic surface $X\subseteq\p3$ satisfying the following properties.
\begin{enumerate}
\item $X$ has a Steiner determinantal representation of size $4+2\quantum$.
\item $X$ has not any Steiner determinantal representation of size $4+2\widehat{\quantum}$ for each $\widehat{\quantum}< \quantum$.
\end{enumerate}
\end{proposition}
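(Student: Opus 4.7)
The plan is to construct, for each $\quantum\ge0$, a K3 surface $X$ embedded in $\p3$ as a smooth quartic whose Picard lattice is exactly the rank two even lattice $\Lambda_\quantum=\bZ h\oplus\bZ C$ with Gram matrix
\[
G_\quantum=\begin{pmatrix} 4 & 4\quantum+6\\ 4\quantum+6 & 4\quantum^2+10\quantum+4\end{pmatrix},
\]
whose entries are designed so that $C\cdot h$ and $C^2=2p_a(C)-2$ match the invariants \eqref{InvariantsC} for $d=4$ via adjunction on a K3 surface (see Remark \ref{rDeterminantal}). The arithmetic input throughout is the identity
\begin{equation}
\label{eKIP}
4D^2-(D\cdot h)^2=-(8\quantum+20)y^2\qquad\text{for every }D=xh+yC\in\Lambda_\quantum,
\end{equation}
obtained by direct computation from $G_\quantum$.

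The lattice $\Lambda_\quantum$ is even of signature $(1,1)$ (since $\det G_\quantum=-(8\quantum+20)<0$), so by Nikulin's theorem on primitive lattice embeddings it embeds primitively into the K3 lattice $\Lambda_{K3}$, and the surjectivity of the period map produces a projective K3 surface $X$ with $\Pic(X)\cong\Lambda_\quantum$. Saint--Donat's very ampleness criterion on K3 surfaces then reduces the claim that $h$ is very ample (so that $X\subseteq\p3$ is realised as a smooth quartic) to ruling out: (i) classes $E\in\Lambda_\quantum$ with $E^2=-2$ and $E\cdot h=0$; (ii) classes $E$ with $E^2=0$ and $E\cdot h\in\{1,2\}$; and (iii) a decomposition $h=2B$ in $\Lambda_\quantum$. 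Each is a short numerical check using \eqref{eKIP}, the parity of $D\cdot h$ on $\Lambda_\quantum$ and the primitivity of $h\in \Lambda_\quantum$.

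Next I would verify that $|C|$ contains a smooth irreducible member: effectivity of $C$ follows from Riemann--Roch on $X$, base--point--freeness is another application of Saint--Donat's theorem (again using \eqref{eKIP} to exclude exceptional elliptic decompositions), smoothness of a general member follows from Bertini and irreducibility from Remark \ref{rSmoothCurve}. The auxiliary vanishings $h^0\big(\cO_X(C-(\quantum+1)h)\big)=h^0\big(\cO_X((\quantum+2)h-C)\big)=0$ required by Corollary \ref{cDeterminantal} reduce, through \eqref{eKIP}, to showing that certain low degree classes in $\Lambda_\quantum$ are not effective. Applying Corollary \ref{cDeterminantal} to such a $C$ then proves assertion (1).

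For (2), suppose $X$ admits a Steiner determinantal representation of size $4+2\widehat\quantum$. By Corollary \ref{cDeterminantal}, $X$ contains a smooth connected curve $\widehat C$ whose class $D=xh+yC\in\Lambda_\quantum$ satisfies $D\cdot h=4\widehat\quantum+6$ and $D^2=4\widehat\quantum^2+10\widehat\quantum+4$. Reducing the first equation modulo $4$ forces $y=2\lambda+1$ to be odd, and substituting the value of $D^2$ into \eqref{eKIP} gives $8\widehat\quantum+20=(8\quantum+20)(2\lambda+1)^2$, hence
\[
\widehat\quantum-\quantum=(4\quantum+10)\lambda(\lambda+1)\ge 0,
\]
with equality if and only if $\lambda\in\{0,-1\}$, both of which give $\widehat\quantum=\quantum$. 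So no strictly smaller value of $\widehat\quantum$ can occur. The main obstacle I anticipate is the existence step: realising a smooth quartic with Picard lattice exactly $\Lambda_\quantum$ combines Nikulin's lattice theory, the surjectivity of the K3 period map and the careful use of Saint--Donat's criteria to exclude every exceptional configuration; once this is in place, the separation statement (2) collapses to the single identity \eqref{eKIP}.
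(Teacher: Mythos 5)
Your argument is correct and, at its core, it is the paper's argument: the whole of part (2) is exactly the paper's lattice computation (the paper phrases it as $(2\quantum+5)y^2=2\widehat{\quantum}+5$, which is your identity after substituting $y=2\lambda+1$), and part (1) likewise rests on producing a smooth quartic whose Picard lattice is precisely $\bZ h\oplus\bZ C$ with the Gram matrix you wrote and then checking $h^0\big(\cO_X((\quantum+2)h-C)\big)=0$. The one place you diverge is the existence step: the paper simply invokes Knutsen \cite[Theorem 1.1]{Knu2}, which delivers in one stroke a smooth quartic with Picard lattice $\bZ h\oplus\bZ C$ and $C$ a smooth curve of the prescribed degree and genus, while you propose to rebuild this from Nikulin's embedding theorem, surjectivity of the period map and Saint--Donat's criteria -- in effect re-proving the relevant case of Knutsen's theorem. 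That is legitimate and more self-contained, but note that the burden it shifts onto your ``short numerical checks'' is not entirely captured by your identity \eqref{eKIP}: after the period construction one must still arrange that $h$ (and not merely some Weyl-translate of it) lies in the ample cone, and the nefness and base-point-freeness of $C$ require controlling effective $(-2)$-classes, which genuinely exist in these lattices (e.g.\ for $\quantum=3$ the class $C-3h$ has square $-2$ and is effective), so they must be checked not to meet $C$ negatively rather than shown not to exist. Two smaller points: the vanishing $h^0\big(\cO_X((\quantum+2)h-C)\big)=0$ is not a pure determinant computation either -- the paper's argument is that an effective class of $h$-degree $2$ and negative square would force a line in $\Lambda_\quantum$, which is excluded by parity of $D\cdot h$; and your appeal to Remark \ref{rSmoothCurve} for connectedness is only available once you know $\cO_X(C-\quantum h)$ is an $h$-instanton (hence $\reg\le\quantum$), which, as in the proof of Theorem \ref{tDeterminantal}, follows from the numerical invariants and the two $h^0$-vanishings, so the order of the verifications matters. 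With those checks carried out, your route closes the same way the paper's does.
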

\begin{proof}
Thanks to \cite[Theorem 1.1]{Knu2} there exists a smooth quartic surface $X\subseteq\p3$ whose Picard lattice is  $\Lambda:=\bZ h\oplus\bZ C$, where $\cO_X(h):=\cO_X\otimes\cO_{\p3}(1)$ and $C$ is a smooth curve satisfying equalities \eqref{InvariantCurve}.

In order to show the existence of a Steiner determinantal representation of size $4+2\quantum$ for $X$, it suffices to check that $h^0\big(\cO_X((\quantum+2)h-C)\big)=0$ which is equivalent to $\vert \cO_X((k+2)h-C)\vert=\emptyset$ because $C\not\in \vert (k+2)h\vert$. If $E\in \vert \cO_X((k+2)h-C)\vert$, then $Eh=2$ and $E^2=-2\quantum-4$, hence the support of $E$ contains a line $L$. If $L=xh+yC$, then $1=Lh=4x+(4\quantum+6)y$, a contradiction.

Assume that $X$ has a Steiner determinantal representation of size $4+2\widehat{\quantum}$. Thus $X$ would contain a curve $\widehat{C}$ such that $\widehat{C}h=4\widehat{\quantum}+6$ and $\widehat{C}^2=4\widehat{\quantum}^2+10\widehat{\quantum}+4$. If $\widehat{C}\in\vert \cO_X(xh+yC)\vert$, then
$$
\widehat{C}h=4x+ (4\quantum+6)y, \qquad \widehat{C}^2=4x^2+2(4\quantum+6)xy+(4\quantum^2+10\quantum+4)y^2.
$$
Simple computations then lead to the equality $(2\quantum+5)y^2=2\widehat{\quantum}+5$ which has no integer solutions if $\quantum>\widehat{\quantum}$.
\end{proof}

The Hilbert scheme of smooth quartics in $\p3$ is identified with an open non--empty subset $\fK\subseteq\vert\cO_{\p3}(4)\vert\cong\p{34}$. The locus of determinantal quartics $\fK_0\subseteq\fK$ has dimension
$$
16h^0\big(\cO_{\p3}(1)\big)-1-2\dim(\PGL(\p3))=33.
$$
Proposition \ref{pExistenceQuartic} guarantees that the locus $\fK_\quantum\subseteq\fK$ of surfaces having a Steiner determinantal representation of size $4+2\quantum$ is not contained in $\fK_0$ for each $\quantum\ge1$. Proposition \ref{pQuarticDeterminantal} also yields that $\fK_0\not\subseteq\fK_\quantum$ for infinitely many $\quantum\ge1$. When $\quantum=1$ we can  say something more about the locus $\fK_1$.

\begin{proposition}
\label{pHilbertQuartic}
Let $\field=\bC$.

The locus $\fK_1\subseteq\fK$ has dimension $33$.
\end{proposition}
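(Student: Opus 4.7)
The plan is to realize $\fK_1$ as the image of a single parameter space of Steiner determinantal representations of size $6$, and to compute its dimension by counting morphisms modulo the natural group of equivalences. Corollary \ref{cHypersurfaceSpace}, applied with $d=4$ and $\quantum=1$, together with the fact (recalled after the introduction of Steiner $r$--representations) that a locally Cohen--Macaulay $h$--instanton sheaf of rank $1$ on a smooth quartic is a line bundle, shows that a smooth $X$ lies in $\fK_1$ exactly when it admits an injective morphism $\varphi\colon \cG(-1)\to\cF^\vee$, where $\cG,\cF$ are Steiner bundles on $\p3$ of rank $6$ with $h^2\big(\cG(-3)\big)=h^2\big(\cF(-3)\big)=1$, such that $\det(\varphi)$ is the form defining $X$.

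First I would pin down $\cG$ and $\cF$ up to isomorphism. Corollary \ref{cSteinerSpace} forces the resolution
$$
0\longrightarrow\cO_{\p3}(-1)\longrightarrow\cO_{\p3}^{\oplus 7}\longrightarrow\cG\longrightarrow 0
$$
(since $h^0(\cG)=\rk(\cG)+h^2(\cG(-3))=7$), its leftmost arrow being determined by $7$ linear forms. For $\cG$ to be locally free these forms must span $H^0\big(\cO_{\p3}(1)\big)\cong\field^4$; a change of basis on $\cO_{\p3}^{\oplus 7}$ then brings the map to $(x_0,x_1,x_2,x_3,0,0,0)$, so $\cG\cong T_{\p3}(-1)\oplus\cO_{\p3}^{\oplus 3}$, and symmetrically $\cF^\vee\cong\Omega^1_{\p3}(1)\oplus\cO_{\p3}^{\oplus 3}$. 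With $\cG,\cF^\vee$ fixed up to isomorphism, I would split $\Hom\big(\cG(-1),\cF^\vee\big)$ along the direct--sum decomposition; Bott's vanishing and the Euler sequence give $h^0\big(\Omega^1_{\p3}(2)\big)=6$, $h^0\big(T_{\p3}(-1)\big)=4$, $h^0\big(\Omega^1_{\p3}(1)\big)=0$, while tensoring the Euler sequence of $\Omega^1_{\p3}$ by $\Omega^1_{\p3}(3)$ and exploiting the $0$--regularity of $\Omega^1_{\p3}(2)$ (so that the induced multiplication map is surjective) yields $h^0\big(\Omega^1_{\p3}\otimes\Omega^1_{\p3}(3)\big)=24-20=4$. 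Summing the four contributions gives $\dim_\field\Hom\big(\cG(-1),\cF^\vee\big)=4+18+18+36=76$, and the same strategy, combined with the simplicity of $T_{\p3}$ and $\Omega^1_{\p3}$, produces $\dim_\field\mathrm{End}(\cG)=\dim_\field\mathrm{End}(\cF^\vee)=22$.

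Finally the connected algebraic group $G:=\Aut(\cG)\times\Aut(\cF^\vee)$, of dimension $44$, acts on the open locus of injective $\varphi$'s (non--empty by Proposition \ref{pExistenceQuartic}) via $(A,B)\cdot\varphi=B^{-1}\varphi A$. Proposition \ref{pUnique} identifies its stabilizer with the automorphism group of $\coker(\varphi)$, which is a line bundle on a smooth integral variety and therefore contributes a $1$--dimensional torus, so the generic orbit has dimension $43$ and the quotient $\mathcal{M}$ has dimension $76-43=33$. The natural map $\mathcal{M}\to\fK_1$ sending $\varphi$ to $V(\det(\varphi))$ is surjective by Corollary \ref{cHypersurfaceSpace} with finite generic fibres (the set of $h$--instanton line bundles on a fixed quartic being a discrete subset of $\Pic(X)$), hence $\dim\fK_1=33$. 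The main obstacle is pinning down $h^0\big(\Omega^1_{\p3}\otimes\Omega^1_{\p3}(3)\big)$ precisely; every other numerical ingredient reduces to Bott's vanishing or the Euler sequence on $\p3$, and the group--theoretic step is forced by Proposition \ref{pUnique}.
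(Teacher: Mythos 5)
Your argument is correct, and it reaches the value $33$ by a genuinely different route from the paper. The paper first translates the existence of a size--$6$ Steiner determinantal representation into the existence of a smooth curve $C\subseteq X$ with $\deg(C)=p_a(C)=10$ (Theorem \ref{tDeterminantal}), then works with the incidence variety $\fX\subseteq\fK\times\fH$: the fibres of $\beta$ are $\p{10}$'s, the lower bound for $\dim(\fX)$ comes from the normal bundle sequence of $C$, and the upper bound $\dim(\fK_1')\le 33$ comes from Noether--Lefschetz (the very general quartic has cyclic Picard group). You instead classify the relevant Steiner bundles outright -- the resolution $0\to\cO_{\p3}(-1)\to\cO_{\p3}^{\oplus7}\to\cG\to0$ forces $\cG\cong\cF\cong T_{\p3}(-1)\oplus\cO_{\p3}^{\oplus3}$ -- and then count: $\hom\big(\cG(-1),\cF^\vee\big)=76$ (your value $h^0\big(\Omega^1_{\p3}\otimes\Omega^1_{\p3}(3)\big)=4$ is correct, e.g.\ via $\Omega^1\otimes\Omega^1\cong S^2\Omega^1\oplus\Omega^2$ and $\Omega^2_{\p3}(3)\cong T_{\p3}(-1)$), $\dim\Aut(\cG)+\dim\Aut(\cF^\vee)=44$, and Proposition \ref{pUnique} identifies the stabilizer of $\varphi$ with $\Aut(\coker\varphi)\cong\bC^*$, giving $76-43=33$. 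This buys both bounds simultaneously, avoids the Noether--Lefschetz input and the deformation theory of space curves, and in fact shows (which the paper does not state) that $\fK_1$ is irreducible, being the image of an open subset of the $76$--dimensional Hom space; the paper's route, on the other hand, yields the extra information recorded in the subsequent remark ($h^1\big(\cI_{C\vert\p3}(4)\big)=0$ and $\dim(\fH')=40$). Two points you should tighten, neither fatal: rather than forming the quotient $\mathcal M$ (whose existence as a variety you do not justify), compute directly with the dominant morphism from the open locus $U$ of $\varphi$'s with $\det(\varphi)$ a smooth quartic to $\overline{\fK_1}$, using that every fibre is a countable union of $43$--dimensional $G$--orbits indexed by the line bundles arising as cokernels; and note that ``discrete subset of $\Pic(X)$'' only gives countability of that index set, which over $\bC$ is all you need for the fibre to have dimension exactly $43$ (finiteness does hold, but requires the negative definiteness of $h^\perp$ in the K3 lattice, not just discreteness).
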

\begin{proof}
Let $\fH$ be the union of the components of the Hilbert scheme containing curves $C\subseteq\p3$ such that $\deg(C)=p_a(C)=10$ and  consider the incidence relation 
$$
\overline{\fX}:=\left\{\ (X,C)\ \vert\ C\subseteq X\ \right\}\subseteq \fK\times\fH.
$$
Proposition \ref{pExistenceQuartic} implies that the open subset $\fX$ of pairs $(X,C)$ such that
$$
h^0\big(\cO_X(3h-C)\big)=0
$$
is non--empty. We have two projections $\alpha\colon \fX\to\fH$ and $\beta\colon \fX\to\fK$. 

On the one hand, the locus of quartic surfaces admitting a Steiner determinantal representation of size $6$ is $\fK_1:=\im(\beta)\subseteq\fK$. Since the very general quartic surface has cyclic Picard group, it follows that each irreducible component $\fK_1'\subseteq\fK_1$ has dimension $33$ at most. Equality \eqref{RRsurface} yields that the fibre of $\beta$ over $X\in\fK_1$ is isomorphic to $\p{10}$. Taking $\fK_1'$ as any component of maximal dimension we deduce that
\begin{equation}
\label{fK}
\dim(\fX)=\dim(\beta^{-1}(\fK_1'))=\dim(\fK_1')+10.
\end{equation}

On the other hand, if $C\in\fH$, then sequence \eqref{seqStandard} with $Y:=C$ and $P:=\p3$ tensored by $\cO_{\p3}(4)$ yields $\dim(\alpha^{-1}(C))=3+h^1\big(\cI_{C\vert\p3}(4)\big)$, thanks to equality \eqref{RRcurve}. Consider any component $\fH'$ over which $h^1\big(\cI_{C\vert\p3}(4)\big)$ is minimal.

We know that $\omega_C\cong\cO_C\otimes\cO_X(C)$ thanks to the adjunction formula on $X$. Thus, for each pair $(X,C)\in\fX$, the cohomology of the exact sequence
$$
0\longrightarrow\cN_{C\vert X} \longrightarrow\cN_{C\vert \p3} \longrightarrow\cO_C\otimes\cN_{X\vert \p3} \longrightarrow0
$$
and the isomorphisms $\cN_{X\vert \p3}\cong\cO_X\otimes\cO_{\p3}(4)$, $\cN_{C\vert X}\cong\cO_C\otimes\cO_X(C)\cong\omega_C$ yield 
\begin{equation}
\label{fH}
43+h^1\big(\cI_{C\vert\p3}(4)\big)\le\dim(\fH')+\dim(\alpha^{-1}(C))\le \dim(\fX).
\end{equation}
By combining equality \eqref{fK} with inequality \eqref{fH} we finally obtain 
$$
33+h^1\big(\cI_{C\vert\p3}(4)\big)\le\dim(\fK_1'),
$$
whence $\dim(\fK_1)=\dim(\fK_1')=33$.
\end{proof}

\begin{remark}
Using the notation introduced in the proof of Proposition \ref{pHilbertQuartic} we also deduce that $\dim(\fH')=40$ and $h^1\big(\cI_{C\vert\p3}(4)\big)=0$ for $C\in\fH'$.
\end{remark}

Thus the first actually interesting case is when $\rk(\cE)=2$ and we will inspect it in the following sections.

\section{Steiner pfaffian representation of hypersurfaces in $\p{n+1}$}
\label{sPfaffian}
We are mostly interested to Steiner representations coming from instanton bundles of rank greater than $1$. In order to deal with some examples in this direction we start by proving Theorem \ref{tPfaffian} stated in the introduction. Its proof is similar to the one of   \cite[Theorem B]{Bea}.

\medbreak
\noindent{\it Proof of Theorem \ref{tPfaffian}.}
Thanks to Corollaries \ref{cHypersurface} and \ref{cHypersurfaceSpace} we know the existence of sequence \eqref{seqSteinerInstanton} with $\rk(\cG)=\rk(\cF)=\rk(\cE)d+2\quantum$.

In what follows we will prove that $\cG\cong\cF$ and that $\varphi$ can be actually taken $\varepsilon$--symmetric, i.e. such that $\varphi^\vee(-1)=\varepsilon\varphi$. Let $F$ be a degree $d$ irreducible form defining $X$. Since $\cE$ is supported on $X$, it follows that the covariant functor $\sHom_{\p{n+1}}\big(\cE,\cdot\big)$ applied to the exact sequence
$$
0\longrightarrow\cO_{\p {n+1}}(-d)\mapright{\cdot F}\cO_{\p {n+1}}\longrightarrow\cO_X\longrightarrow0
$$
yields the isomorphisms
$$
\cE^\vee:=\sHom_X\big(\cE,\cO_X\big)\cong\sHom_{\p {n+1}}\big(\cE,\cO_X\big)\cong\sExt^1_{\p {n+1}}\big(\cE,\cO_{\p {n+1}}(-d)\big)
$$
because the multiplication by $F$ induces the zero map. 

By applying $\sHom_{\p {n+1}}\big(\cdot,\cO_{\p{n+1}}(-1)\big)$ to sequence \eqref{seqSteinerInstanton} we obtain an exact sequence
\begin{equation}
\label{seqSteinerInstantonDual}
0\longrightarrow\cF(-1)\mapright{\varphi^\vee(-1)}\cG^\vee\mapright{q'}\cE^\vee((d-1)h)\longrightarrow0
\end{equation}

By definition there is an isomorphism $\eta\colon\cE\mapright\sim\cE^\vee((d-1)h)$, hence Proposition \ref{pUnique} yields the existence of a commutative 
\begin{equation*}
\begin{CD}
0@>>>\cG(-1)@>\varphi>>   \cF^\vee@>q>> \cE@>>>0\ \ \\
@.@V \vartheta(-1) VV @V \psi VV @V \eta VV \\
0@>>>\cF(-1)@>\varphi^\vee(-1)>>   \cG^\vee@>q'>> \cE^\vee((d-1)h)@>>>0\ .\\
\end{CD}
\end{equation*}
where all the vertical arrows are isomorphisms. By applying $\sHom_{\p{n+1}}(\cdot,\cO_{\p{n+1}}(-1)\big)$ to the above diagram, taking into account that $\cE$ is reflexive and $\eta^\vee(d-1)=\varepsilon \eta$, we obtain the second commutative diagram
\begin{equation*}
\begin{CD}
0@>>>\cG(-1)@>\varphi>>   \cF^\vee@>q>> \cE@>>>0\ \ \\
@.@V \psi^\vee(-1) VV @V \vartheta^\vee VV @V \eta^\vee(d-1) VV \\
0@>>>\cF(-1)@>\varphi^\vee(-1)>>   \cG^\vee@>q'>> \cE^\vee((d-1)h)@>>>0\ .\\
\end{CD}
\end{equation*}
Since $\varepsilon\eta^\vee(d-1)=\eta$, it follows that both $\psi$ and $\varepsilon\vartheta^\vee$ are liftings of $\varepsilon\eta^\vee(d-1)=\eta$, hence they must coincide by Proposition \ref{pUnique}. In particular
$$
\psi^{-1}\circ\varphi^\vee(-1)=\varphi\circ\vartheta^{-1}(-1)=\varphi\circ(\varepsilon\psi^{-1})^\vee(-1)=\varepsilon(\psi^{-1}\circ\varphi^\vee(-1))^\vee.
$$
Replacing $\varphi$ with $\psi^{-1}\circ\varphi^\vee(-1)$ we can assume that $\varphi$ is $\varepsilon$--symmetric and  the proof of the statement is complete.
\qed
\medbreak

The most common cases of $\varepsilon$--orientable sheaves occur when $X$ is smooth. In this case a locally Cohen--Macaulay $h$--instanton sheaf $\cE$ on $X$ is locally free thanks to the Auslander–Buchsbaum formula \cite[Theorem 1.3.3]{Br--He}. If $\rk(\cE)=2$, we can consider the skew--symmetric morphism induced by the exterior product so that $\varepsilon=-1$. 

By Theorem \ref{tPfaffian} if a rank two orientable $h$--instanton sheaf $\cE$ with quantum number $\quantum$ on an integral hypersurface $X\subseteq\p n$ of degree $d$ exists, then there is a Steiner pfaffian representation of $X$ of size $2(d+\quantum)$, i.e. there is a Steiner bundle $\cF$ on $\p n$ with $h^{n}\big(\cF(-n-1)\big)=\quantum$ and rank $r=2(d+\quantum)$ and a skew--symmetric morphism $\varphi\colon\cF(-1)\to\cF^\vee$ such that $X=D_{r-2}(\varphi)$.

In this case the pfaffian $\pf(\varphi)$ of $\varphi$ is defined. Moreover, $\det(\varphi)=\pf(\varphi)^2$ and $D_{r-1}(\varphi)=D_{r-2}(\varphi)=\{\ \pf(\varphi)=0\ \}$. 

If $X$ is smooth then an $\varepsilon$--orientable $h$--instanton sheaf with quantum number $\quantum=0$ is actually an Ulrich bundle. In this particular case $\cE$ is $h$--Ulrich and the same is true for $\cF$ hence $\cF\cong \cO_{\p{n+1}}^{\oplus2d}$ by the Horrocks theorem. Thus the polynomial defining $X$ is the pfaffian of a $2d\times2d$ skew--symmetric matrix of linear forms. 

\begin{remark}
Let $n\ge 3$. We confront the above Steiner pfaffian representation of smooth hypersurfaces $X\subseteq\p{n+1}$ with the bundle pfaffian representation described in \cite[Subsection 2.1]{Ka--Ka} based on the results in \cite{Wal}.

Consider a hypersurface $X\subseteq\p{n+1}$ endowed with a rank two orientable $h$--instanton bundle $\cE$ with quantum number $\quantum$: thanks to Theorem \ref{tPfaffian} we know the existence of a skew--symmetric morphism
$$
\varphi\colon\cF(-1)\longrightarrow\cF^\vee
$$
where $\cF$ is a Steiner bundle with $h^{n}\big(\cF(-n-1)\big)=\quantum$ and $X$ is defined by $\pf(\varphi)$

Consider any non--negative integer $\ell$ such that $\cE(\ell h)$ has a section vanishing on a codimension two subscheme $Y\subseteq X$. On the one hand, since 
$$
\det(\cN_{Y\vert X})\cong\cO_Y\otimes\det(\cE(\ell h))\cong\cO_Y\otimes \cO_X((d+2\ell-1)h), 
$$
it follows from Theorem \ref{tSerre} that $\cI_{Y\vert X}$ fits into the  sequence \eqref{seqSerre} with $Z:=Y$,  $P:=X$ and $\cA:=\cE(\ell h)$. On the other hand, adjunction formula on $X$ yields $\omega_Y\cong\cO_Y\otimes\cO_{\p{n+1}}(2d+2\ell-n-3)$. In particular, if $n$ is even, then $2d+2\ell-n-3$ is odd, hence \cite[Theorem 0.1]{Wal} implies the existence of a vector bundle $\cU$ on $\p{n+1}$ such that $\cI_{Y\vert\p{n+1}}$ fits into the exact sequence
$$
0\longrightarrow\cO_{\p{n+1}}(-2s-t)\longrightarrow\cU(-s-t)\mapright{\theta}\cU^\vee(-s)\longrightarrow\cI_{Y\vert\p{n+1}}\longrightarrow0
$$
where $\theta$ is skew--symmetric and $s,t,p$ are such that $\rk(\cU)=2p+1$, $c_1(\cU)=pt-s$ and $\omega_Y\cong\cO_Y\otimes\cO_{\p{n+1}}(2s+t-n-2)$, hence $t=2(d+\ell-s-1)+1$. 

If we make the additional assumption $H^1_*\big(\cU\big)=0$, setting $\cV:=\cU(s+1-d-\ell)$, then there is another  skew--symmetric morphism
$$
v\colon(\cV\oplus\cO_{\p{n+1}}(\ell))(-1)\longrightarrow(\cV\oplus\cO_{\p{n+1}}(\ell))^\vee
$$
such that $\pf(v)$ is a form defining $X$, as pointed out in \cite[Subsection 2.1]{Ka--Ka}. 

In order to confront the skew--symmetric morphisms $\varphi$ and $v$, we first notice that if $\ell\ne0$, then $\cV\oplus\cO_{\p{n+1}}(\ell)$ is certainly not Steiner. While in the case $\ell=0$ we obtain the strong restriction $h^0\big(\cE\big)\ne0$.
\end{remark}

The following result is a by--product of Theorem \ref{tPfaffian} and of some results proved in \cite[Example 6.1]{An--Ca1}: it generalizes \cite[Proposition 7.6]{Bea}.

\begin{proposition}
\label{pPfaffianSurface}
Let $X\subseteq\p3$ be a smooth surface of degree $d$ over $\field=\bC$.

There exists a Steiner pfaffian representation of $X$ of even size $r$ such that
\begin{equation*}
2d\le r\le r(d):=\left\lbrace\begin{array}{ll} 
2d \quad&\text{if $d\le 4$,}\\
\dfrac13\left(10d^3-39d^2+35d+12\right)\quad&\text{if $d\ge5$.}
\end{array}\right.
\end{equation*}
\end{proposition}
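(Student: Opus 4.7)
The plan is to reduce to the construction of suitable rank $2$ bundles on $X$ via Theorem \ref{tPfaffian}. Since $X$ is smooth, any locally Cohen--Macaulay $h$--instanton sheaf on $X$ is locally free by the Auslander--Buchsbaum formula. For any rank $2$ vector bundle $\cE$ on $X$, the exterior product $\cE \times \cE \to \det(\cE)$ is an alternating bilinear form; if moreover $\det(\cE) \cong \cO_X((d-1)h)$, it provides a $(-1)$--orientation on $\cE$. It therefore suffices to exhibit a rank $2$ $h$--instanton bundle $\cE$ on $X$ with $\det(\cE)\cong\cO_X((d-1)h)$ and quantum number $\quantum$ such that the corresponding Steiner pfaffian representation, which by Theorem \ref{tPfaffian} has size $r = 2(d+\quantum)$, satisfies $2d \le r \le r(d)$.

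The lower bound $r = 2d$ corresponds to $\quantum = 0$, \emph{i.e.}\ to a rank $2$ Ulrich bundle on $X$. Such bundles exist on every smooth surface of degree $d \le 4$: spinor bundles handle the quadric, and for smooth cubics and quartics the Serre correspondence on suitably chosen curves yields the required Ulrich bundle (see \cite[Sections 3, 4 and 5]{Bea}). This already settles the proposition when $d \le 4$, where $r(d) = 2d$ and the single value $r = 2d$ works.

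For $d \ge 5$ the existence of a rank $2$ Ulrich bundle on an arbitrary smooth surface of degree $d$ is in general open, so I would instead invoke \cite[Example 6.1]{An--Ca1}, which produces on every smooth surface $X \subseteq \p3$ of degree $d$ a rank $2$ $h$--instanton bundle $\cE$ with $\det(\cE) \cong \cO_X((d-1)h)$. The construction proceeds via Serre's correspondence (Theorem \ref{tSerre}) applied to a carefully chosen zero--dimensional subscheme $Z \subseteq X$: the resulting Koszul sequence \eqref{seqSerre}, together with Riemann--Roch \eqref{RRsurface} on $X$ and the standard cohomology of $\cO_X(th)$, forces $\cE$ to be $h$--instanton and pins down its quantum number $\quantum(d)$. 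The point is to check that one can arrange $2(d+\quantum(d)) = r(d) = \tfrac13(10d^3-39d^2+35d+12)$.

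The main obstacle is precisely this cohomological computation of $\quantum(d)$ and the verification that $\cE$ satisfies all the vanishings in the definition of $h$--instanton sheaf; this is the content of \cite[Example 6.1]{An--Ca1}. Once that is in hand, Theorem \ref{tPfaffian} immediately converts the rank $2$ orientable instanton bundle into a Steiner pfaffian representation of the required even size.
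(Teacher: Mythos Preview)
Your overall strategy matches the paper's exactly: reduce via Theorem \ref{tPfaffian} to the existence of a rank two orientable $h$--instanton bundle with $\det(\cE)\cong\cO_X((d-1)h)$, dispatch $d\le4$ by the known linear--pfaffian results, and for $d\ge5$ invoke the construction in \cite{An--Ca1}. However, your description of that construction is inaccurate. The bundle is \emph{not} obtained from Serre correspondence on a zero--dimensional subscheme $Z\subseteq X$; rather, one takes a smooth connected curve $C\in\vert(d-1)h\vert$ and a globally generated line bundle $\cO_C(D)$ on $C$ with $h^1\big(\cO_X((d-3)h)\otimes\cO_C(D)\big)=0$, and then $\cE$ is the dual of the kernel of the evaluation map $\cO_X^{\oplus2}\to\cO_C(D)$ associated to two sections without common zeros. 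This is a Lazarsfeld--Mukai type elementary modification along a curve, not a codimension--two Serre construction.

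This distinction matters for the numerics: the paper's proof does not merely cite the reference but actually carries out the computation. One has $\quantum=\deg(D)-\tfrac{d}{6}(2d^2-3d+1)$ by Riemann--Roch, and the crucial step is the degree estimate: taking $\deg(D)=2p_a(C)=(3h+2K_X)C+2=2d^3-7d^2+5d+2$ guarantees both that $\cO_C(D)$ is globally generated and that the required $h^1$ vanishes (since then $\deg(D)>(d-2)hC$). Plugging in gives $\quantum=\tfrac{1}{6}(10d^3-39d^2+29d+12)$, whence $r=2d+2\quantum=r(d)$. Your proposal leaves this entirely to the reference and moreover sketches the wrong mechanism, so it would not stand on its own as a proof of the specific bound $r(d)$.
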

\begin{proof}
Let $\cO_X(h):=\cO_X\otimes\cO_{\p3}(1)$. Notice that $K_X=(d-4)h$, $h^2=d$, $q(X)=0$ and $p_g(X)={{d-1}\choose3}$.

The polynomial defining each surface $X\subseteq\p3$ of degree $d\le 4$ is actually the pfaffian of a $2d\times2d$ skew--symmetric matrix with linear entries (see \cite[Proposition 7.6 (a)]{Bea} and \cite[Corollary 1.2]{C--K--M}), hence the statement is true in this case. 

If $d\ge5$, as explained in \cite[Example 6.10]{An--Ca1}, we can construct a rank two orientable $h$--instanton bundle $\cE$ on $X$ as follows. Let $C\in\vert(d-1)h\vert$ be smooth and connected, and  $\cO_C(D)$ globally generated such that $h^1\big(\cO_X((d-3)h)\otimes\cO_C(D)\big)=0$. Thus there is an exact sequence
$$
0\longrightarrow\cE^\vee\longrightarrow\cO_X^2\mapright{(\sigma_0,\sigma_1)}\cO_C(D)\longrightarrow0.
$$
where $\sigma_0,\sigma_1\in H^0\big(\cO_C(D)\big)$ have no common zeros. The sheaf $\cE$ is a rank two orientable $h$--instanton bundle with $c_1(\cE)=(d-1)h$ and $c_2(\cE)=\deg(D)$, hence
\begin{align*}
\quantum=h^1\big(\cE(-h)\big)=\deg(D)-\frac d6\left(2d^2-3d+1\right).
\end{align*}
thanks to equality \eqref{RRsurface}. Thus we have to estimate $\deg(D)$. By adjunction we have $\cO_C(K_C)\cong\cO_C\otimes\cO_X(3h+2K_X)$, hence every divisor $D$ with
$$
\deg(D)=2p_a(C)=(3h+2K_X)C+2=2d^3-7d^2+5d+2
$$
is globally generated. If this is the case, then we also have $\deg(D)> (d-2)hC$, hence
$$
h^1\big(\cO_X((d-3)h)\otimes\cO_C(D)\big)=h^0\big(\cO_X((d-2)h)\otimes\cO_C(-D)\big)=0.
$$
It follows the existence of a rank two orientable $h$--instanton bundle $\cE$ on $X$ with
$$
\quantum=\frac16\left(10d^3-39d^2+29d+12\right).
$$
Thus the statement follows from Theorem \ref{tPfaffian}, because $r=\rk(\cF)=2d+2\quantum$, where $\quantum\ge0$ by definition.
\end{proof}

The argument used in the proof above for computing $r(d)$ when $d\ge5$ works for $d=4$ as well and it yields the upper bound $60$ in this case, which is very far from the result in \cite{C--K--M}: thus the function $r(d)$ is probably far from being optimal for each $d\ge4$. 

On the one hand, as we pointed out in the proof, the form defining a general surface $X$ of degree $d\le 15$ is the pfaffian of a $2d\times2d$ skew--symmetric matrix with linear entries. 
On the other hand,  when $d\geq 16$ the known best upper bound for the size of a square matrix of linear forms whose determinant is a power of the form defining $X$ is  $d^{\mathrm{ch}(f)-1}$ where 
$$
\mathrm{ch}(f)=\left\lceil \frac{1}{3d+1}{{d+3}\choose 3}\right\rceil:
$$
see \cite[Theorem 4.2.14]{C--MR--PL} and \cite[Corollary 1.3]{To--Vi}. Notice that $d^{\mathrm{ch}(f)-1}$ is considerably larger than the function $r(d)$ in Proposition \ref{pPfaffianSurface}.

Moreover, if $d\ge16$ we certainly know the size $r$ of a Steiner pfaffian representation for the general $X$ is at least $2d+2$, thanks to \cite[Proposition 7.6]{Bea}. 
 
Thus the following question is natural.

\begin{question}
Which are the sharpest lower and upper bounds for $r$? 
\end{question}

If  $\cF$ is a vector bundle of even rank $r$ on $\p {n+1}$ and $\varphi\in H^0\big((\wedge^2\cF^\vee)(1)\big)$ is an injective skew--symmetric morphism, then it is expected that $D_{r-4}(\varphi)\ne\emptyset$ if $n\ge5$. In particular, $D_{r-2}(\varphi)$ turns out to be singular in that range. Thus the following question also sounds natural. 

\begin{question}
Which classes of smooth hypersurfaces in $\p4$ and $\p5$ are Steiner pfaffian?
\end{question}

\section{Further examples of Steiner pfaffian representations}
\label{sExample}
In this section we collect some results and examples about the Steiner pfaffian representation of smooth hypersurfaces $X\subseteq \p {n+1}$ for low values of $n\ge 3$. For simplicity, we will implicitly assume in all the examples that $\field=\bC$ and we set $\cO_X(h):=\cO_X\otimes\cO_{\p {n+1}}(1)$.

Let us first examine some examples in $\p {n+1}$ with $n\ge4$.

\begin{example}
\label{eQuadric}
Let $X\subseteq\p {n+1}$ be a smooth quadric hypersurface when $n\ge4$.

If $n\ge6$, then there are no rank two $h$--instanton bundles $X$ (see \cite[Proposition 6.3]{An--Ca1}). Nevertheless, each such $X$ supports $h$--Ulrich bundles of rank $2^{\left[\frac{n-1}2\right]}$, namely the spinor bundles, and there are no other indecomposable aCM bundles of rank greater than $1$ up to shift (e.g. see \cite{Ei--He}).

If $n=5$, then $X$ supports the Cayley bundle $\cE$ (see \cite{Ott3}) which is a rank two orientable $h$--instanton. Since $h^0\big(\cE\big)=h^1\big(\cE\big)=0$ and $\quantum=h^1\big(\cE(-h)\big)=1$ (see \cite[Theorem 3.1]{Ott3}), it follows that sequence \eqref{seqPfaffian} becomes
$$
0\longrightarrow(\Omega_{\p6}^1)^\vee(-2)\mapright\varphi\Omega_{\p6}^1(1)\longrightarrow\cE\longrightarrow0
$$
where $\varphi$ is skew symmetric. Notice that
$$
h^0\big(\Omega^2_{\p 6}(3)\big)=35, \qquad h^0\big(\cO_{\p 6}(2)\big)=28,\qquad  \dim(\Aut(\Omega_{\p6}^1(1)))=1,
$$
which is consistent with the fact that the moduli space of Cayley bundles on a fixed quadric surface has dimension 7 (see \cite[Main Theorem]{Ott3}). In particular, $X$ has a Steiner pfaffian representation of size $6$, but not of size $4$ (because it does not support any Ulrich bundle of rank of rank two)

Let $n=4$. In this case in \cite{Ar--So} a rank two orientable $h$--instanton $\cE$ on $X$ is described. We know that $h^0\big(\cE\big)=1$, $h^1\big(\cE\big)=0$ and $\quantum=h^1\big(\cE(-h)\big)=1$  (see \cite[First table at p. 207]{Ar--So}), hence sequence \eqref{seqPfaffian} becomes
$$
0\longrightarrow(\Omega_{\p5}^1)^\vee(-2)\oplus\cO_{\p 5}(-1)\mapright\varphi\Omega_{\p5}^1(1)\oplus\cO_{\p 5}\longrightarrow\cE\longrightarrow0
$$
where $\varphi$ is skew symmetric. Notice that the latter sequence can be obtained by restricting the former one to a hyperplane $\p5\cong\Sigma\subseteq\p6$. If we set $\Omega:=\Omega_{\p5}^1(1)\oplus\cO_{\p 5}$, then 
$$
h^0\big((\wedge^2\Omega)(1)\big)=35,\qquad h^0\big(\cO_{\p 5}(2)\big)=21,\qquad \dim(\Aut(\Omega))=8,
$$
which is again consistent with the fact that the moduli space of the bundles above has dimension 7 (see \cite[Theorem p. 206]{Ar--So}). In particular, $X$ has a Steiner pfaffian representation of size $6$. Moreover, it has also Steiner pfaffian representations of size $4$ coming from the two aforementioned rank two spinor bundles.
\end{example}

\begin{example}
\label{eCubic4}
In this example we deal with smooth cubic hypersurfaces $X\subseteq\p 5$. The moduli space $\cC$ of smooth cubic hypersurfaces in $\p5$ is birationally isomorphic to $H^0\big(\cO_{\p 5}(3)\big)/GL_6$, hence $\dim(\cC)=20$. 
 
In $\cC$ there is an interesting divisor $\cC_{18}\subseteq\cC$ which has been recently studied in \cite{A--H--T--VA}. We do not give here a precise definition of $\cC_{18}$, but we only mention that its general point represents a hypersurface $X$ containing an isomorphic projection ${Y_6}\subseteq\p5$ of a sextic del Pezzo surface $D_6\subseteq\p6$: for details see \cite[Proposition 6 and its proof]{A--H--T--VA}.

By definition we have  $\omega_{Y_6}\cong\cO_{Y_6}\otimes\cO_{\p5}(-1)$. Moreover,  ${Y_6}$ is non--degenerate, hence $h^0\big(\cI_{{Y_6}\vert X}(h)\big)=0$. Sequence \eqref{seqStandard} with $Y:={Y_6}$ and $P:=X$ yields $h^1\big(\cI_{{Y_6}\vert X}\big)=0$, $h^1\big(\cI_{{Y_6}\vert X}(h)\big)=1$ and $h^2\big(\cI_{{Y_6}\vert X}(-h)\big)=h^1\big(\omega_{Y_6}\big)=0$.
Adjunction formula on $X$ yields $\det(\cN_{{Y_6}\vert X})\cong \cO_{Y_6}\otimes\cO_{\p5}(2)$, hence sequence \eqref{seqSerre} takes the form
$$
0\longrightarrow\cO_X\longrightarrow\cE\longrightarrow\cI_{{Y_6}\vert X}(2h)\longrightarrow0
$$
where $\cO_X(h):=\cO_X\otimes\cO_{\p5}(1)$. Computing the cohomology of the twists of $\cE$ from the above sequence we deduce $h^i\big(\cE(-(i+1)h)\big)=0$ for $i\le 3$ and $h^1\big(\cE(-h)\big)=1$, hence $\cE$ is a rank two orientable $h$--instanton on $X$ with quantum number $\quantum=1$, thanks to \cite[Proposition 6.7]{An--Ca1} because $c_1(\cE)=2h$. In particular $X$ has a Steiner pfaffian representation of size $8$.

Let $\cC_{14}^\circ\subseteq\cC$ be the locus of hypersurfaces containing a quintic del Pezzo surface (see \cite{B--R--S} for details on $\cC_{14}^\circ$ and its closure 
$\cC_{14}\subseteq\cC$: see also \cite[Section 8]{Bea}). As in the previous case one easily checks that fourfolds represented by points of $\cC_{14}^\circ$ support a rank two orientable Ulrich bundle, hence they are pfaffian  in the usual sense. The locus $\cC_{14}$ is a divisor. 

Finally, let us consider any del Pezzo surfaces $D_m\subseteq \p m$ of degree $7\le m\le 9$, $Y_8\not\cong\p1\times\p1$. One can consider a linear space $H\subseteq\p \delta$ of dimension $m-6$ not intersecting its secant variety $\sigma_2(D_m)$. Thus the projection from $H$ onto $\p5$ maps $D_m$ isomorphically onto a surface $Y_m$. As pointed out in Appendix \ref{sMacaulay} using the software {\em Macaulay2} (see \cite{Gr--St}), one checks that for a general $H$ the homogeneous ideal of $Y_m$ in $\p 5$ contains two linearly independent smooth cubics if $m\ne9$. When $m=9$ there still exist two linearly independent smooth cubics through $Y_9$ for a suitable choice of $H$ as proved in \cite[Proposition 3.1]{Ka--Ka}. 

Thus the same argument used in the case $m=6$ shows the existence of smooth cubic hypersurfaces $X\subseteq\p 5$ supporting orientable $h$--instanton bundles with quantum number $\quantum=m-5$, hence with Steiner pfaffian representations of size $2m-4$.
\end{example}

\begin{example}
\label{eQuartic}
The construction of rank two orientable instanton bundles on some cubic hypersurfaces in $\p5$ described in Example \ref{eCubic4} is related to an analogous construction on certain quartic hypersurfaces in $\p5$ via the method described in \cite{Ka--Ka}

More precisely, as in the proof of \cite[Theorem 3.5]{Ka--Ka}, one first links $Y_m$ to another surface $Z_m$ of degree $36-m$ through a complete intersection of two smooth cubics and a quartic. Then one links such a $Z_m$ to a smooth surface $S_m$ of degree $m+9$ through a complete intersection of two smooth cubics and a quintic. The surface $S_m\subseteq\p5$ is non--degenerate, regular and canonically embedded, hence minimal: in particular $h^0\big(\cO_S\otimes\cO_{\p5}(2)\big)=m+16$ by \cite[Proposition VII.5.3]{B--H--P--VV}.

Thus \cite[Theorem 5.3.1]{Mi} implies 
\begin{equation}
\label{Schenzel1}
h^1\big(\cI_{S_m\vert \p5}(2h)\big)=h^1\big(\cI_{Y_m\vert \p5}(h)\big)=m-5,
\end{equation}
for each smooth quartic hypersurface $X$ through $S_m$. The cohomology of sequence \eqref{seqStandard} with $Y:=S_m$ and $P:=\p5$ returns
\begin{equation}
\label{Schenzel2}
h^0\big(\cI_{S_m\vert \p5}(2)\big)=0.
\end{equation}
Theorem \ref{tSerre} and adjunction on $X$ imply the existence of a rank two bundle $\cE$ with $\det(\cE)=\cO_X(3h)$ fitting into sequence \eqref{seqSerre} with $X:=S_m$ and $P:=X$.
On the one hand, $h^0\big(\cE(-th)\big)=h^0\big(\cI_{S_m\vert X}((3-t)h)\big)$. On the other hand, the exact sequence
$$
0\longrightarrow\cI_{X\vert \p5}\longrightarrow\cI_{S_m\vert \p5}\longrightarrow\cI_{S_m\vert X}\longrightarrow0
$$
and the isomorphism $\cI_{X\vert \p5}\cong\cO_{\p5}(-4)$ imply
$$
h^i\big(\cI_{S_m\vert \p5}(3-t)\big)=h^i\big(\cI_{S_m\vert X}((3-t)h)\big).
$$
Equalities \eqref{Schenzel1} and \eqref{Schenzel2} yield $h^0\big(\cE(-h)\big)=0$, $h^1\big(\cE(-h)\big)=m-5$: moreover  $h^0\big(\cE(-2h)\big)=0$ because the embedding $S_m\subseteq\p5$ is non--degenerate.

We conclude that $\cE$ is a rank two orientable $h$--instanton bundle with quantum number $\quantum=m-5$ on the quartic $X$, which then has a Steiner pfaffian representations of size $2m-2$.
\end{example}

\begin{example}
\label{eCubicHigher}
We give some further examples of smooth cubic hypersurfaces $X\subseteq\p {n+1}$ with $n\ge5$.

Let us consider the del Pezzo $(n-2)$--fold of degree $5$ in $\p {n+1}$ for $5\le n\le 8$. Its homogeneous ideal is generated by quadric forms, hence it is contained in many smooth cubic hypersurfaces. Again the same argument used in Example \eqref{eCubic4} shows that each such hypersurface $X$ supports a rank two orientable Ulrich bundle.

Finally, we consider the other del Pezzo $(n-2)$--folds $D_m\subseteq\p{n+m-4}$ of degree $m\ge6$. The postulated dimension of the secant variety $\sigma_2(D_m)$ is $2n-3$, hence $\dim(\sigma_2(D_m))=2n-3-\delta$ where $\delta\ge0$ is the defect of $D_m$. In particular $D_m$ can be isomorphically projected to $Y_m\subseteq\p{n+1}$ if and only if $2n-3-\delta\le n+1$, hence if and only if $1\le n-4\le \delta$. Taking into account of \cite[Theorem 1.1 and Example 2.5]{Ch--Ci} we know that the only admissible cases are either $m=6$ when $5\le n\le 6$ (in this case $Y_6$ is either the image of the Segre embedding of $\p2\times\p2$ inside $\p8$ or its general hyperplane section) or $m=8$ when $n=5$ (in this case $D_8$ is the image of the $2$--uple embedding of $\p3$ inside $\p9$).

In the first case the homogeneous ideals of the $D_6$'s are generated by quadrics and the syzygies among them are linear: in particular, they have property $N_2$ (see \cite{Kw--Pa} for details on such property).  Thus \cite[Theorem 1.2]{Kw--Pa} implies that the homogeneous ideals of the $Y_6$'s  inside $\p6$ and $\p7$ respectively are generated by quadric and cubic forms. Again we have smooth cubic hypersurfaces (in  $\p6$ and $\p7$) containing the $Y_6$'s which, consequently, support rank two orientable instanton bundles with quantum number $\quantum=1$: each such hypersurface has a Steiner pfaffian representation of size $8$.

Let us spend a few words also on the second case. In this case the general projection $Y_8\subseteq\p6$ is not contained in any cubic, but for a special choice of $H$ there is a $3$--dimensional space of cubics through it: see \cite{J--K--K}. Several computations made using {\em Macaulay2} (see \cite{Gr--St}) seems to suggest the impossibility of finding smooth cubics in such a space, but we have no proof for this fact.
\end{example}

Now we turn our attention to lower dimensional hypersurfaces, dealing with Fano threefolds in $\p 4$.

\begin{example}
\label{eFano}
Let $X\subseteq\p4$ be a smooth hypersurface of degree $d$. 

Let $2\le d\le 3$. In \cite[Theorems C and D]{Fa}, the author proves that $X$ supports rank two bundles $\cA$ with $c_1(\cA)=d-3$, $c_2(\cA)h=c\ge d-1$, $h^0\big(\cA\big)=h^1\big(\cA(-h)\big)=0$ for each integer $c\ge d-1$. 
Thus it is easy to check that $\cE:=\cA(h)$ is a rank two orientable $h$--instanton bundle on $X$ with quantum number $\quantum=c-d+1$. We deduce that $\cE$ fits into sequence \eqref{seqPfaffian} with $\cF$ as in sequence \eqref{seqOmega} where $\quantum=c-d+1$, $a=h^0\big(\cA(h)\big)$, $b=h^1\big(\cA(h)\big)$. In both cases, when $c=d-1$, then $\quantum=0$, hence $b=0$ because $\cE$ turns out to be Ulrich. When $c\ge d=2$ we can even find $\cE$ such that if $c=2$, then $a=2$ and $b=0$, and if $c\ge3$, then $a=1$ and $b=2c-5$.

Assume finally that $d=4$ and that $X$ is ordinary, i.e. there is a line $L\subseteq X\subseteq\p4$ such that $\cN_{L\vert X}\cong\cO_{\p1}\oplus\cO_{\p1}(-1)$. Thanks to \cite[Theorem 1.8]{An--Ca1} we still know the existence of a rank two orientable $h$--instanton bundles $\cE$ on $X$. 

In particular, in the aforementioned cases, $X$ has a Steiner pfaffian representation of size $2(d+\quantum)$ for each integer $\quantum\ge0$.
\end{example}

\appendix
\section{Existence of $h$--instanton sheaves using {{Macaulay2}}}
\label{sMacaulay}

As pointed out in Example \ref{eCubic4}, in this section we write a Macaulay code (see \cite{Gr--St}) which allows us to show that there are smooth cubic threefolds $X\subseteq\p5$ containing the isomorphic image $D_8$ of the del Pezzo surface $Y_8\subseteq\p8$.
\medskip

\beginOutput
i1 : R=QQ[x_0,x_1,x_2,x_3,x_4,x_5,x_6,x_7,x_8,x_9];
\endOutput
\beginOutput
i2 : R1=R/(ideal(x_9));\\
\endOutput
\beginOutput
i3 : P5=QQ[z_0,z_1,z_2,z_3,z_4,z_5];\\
\endOutput

\medskip
In order to check that $D_8$ is contained in a smooth cubic fourfold, we start by constructing the ideals of $Y_8$ and $\sigma_2(Y_8)$ (see \cite[Section 4]{C--G--G} for more details). Consider the matrix

\beginOutput
i4 : A1=matrix\{\{x_0,x_1,x_2,x_3,x_4\},\{x_1,x_3,x_4,x_6,x_7\},$\cdot\cdot\cdot$\}\\
\              3        5\\
o4 : Matrix R1  <--- R1\\
\endOutput
\beginOutput
i5 : I8= minors(2,A1);\\
\emptyLine
o5 : Ideal of R1\\
\endOutput
\beginOutput
i6 : K=minors(3,A1);\\
\emptyLine
o6 : Ideal of R1\\
\endOutput

\medskip
Now we construct our center of projection $\textrm{CP}$ by choosing six random linear forms in the variables $x_i$ and we check that $\textrm{CP}$ is disjoint from $\sigma_2(Y_8)$
\medskip

\beginOutput
i7 : for i from 0 to 5 do (f_i=random(1,R));\\
\endOutput
\beginOutput
i8 : CP=ideal(f_0,f_1,f_2,f_3,f_4,f_5);\\
\emptyLine
o8 : Ideal of R1\\
\endOutput
\beginOutput
i9 : saturate (CP+K)\\
o9 = ideal 1\\
\emptyLine
o9 : Ideal of R1\\
\endOutput

\medskip
Finally we project $D_8$ to $\p 5$ using the following method. We consider the quotient ring $R1/I(D_8)$ and we see the ideal of the projection as the kernel of the map
\[
\textrm{P5} \to R1/I(D_8)
\]
where $\textrm{P5}$ represent the coordinate ring of $\p 5$.
\medskip

\beginOutput
i10 : Rbar = R1/I8;\\
\endOutput
\beginOutput
i11 : CP = substitute(CP, Rbar);\\
\emptyLine
o11 : Ideal of Rbar\\
\endOutput
\beginOutput
i12 : J=kernel map (Rbar, P5, gens CP);\\
\emptyLine
o12 : Ideal of P5\\
\endOutput
\beginOutput
i13 : M=gens J;\\
\               1        19\\
o13 : Matrix P5  <--- P5\\
\endOutput

\medskip
The ideal $\textrm{J}$ is generated by $7$ cubics and $12$ quartics. The first generator represents a cubic $X\subseteq\p 5$ that contains $D_8$. Finally we check that the Hilbert polynomial of the singular locus of $X$ vanishes. 
\medskip

\beginOutput
i14 : C=ideal(M_\{0\});\\
\emptyLine
o14 : Ideal of P5\\
\endOutput
\beginOutput
i15 : hilbertPolynomial(ideal (singularLocus(C)))\\
o15 = 0\\
o15 : ProjectiveHilbertPolynomial\\
\endOutput

\medskip
The code can be easily adapted to deal with the del Pezzo surface $D_7$. The ideal of $D_7$ is generated by the $2\times 2$ minors of the matrix $A_2$ which is obtained by removing the fourth column from $A_1$. 

Using the same projection method one obtains that the ideal of $Y_7$ is generated by $13$ cubics and one quartic. It is possible to check that the ideal of the projection contains a cubic form corresponding to a fourfold $X$ through $Y_7$ and whose singular locus has vanishing Hilbert polynomial, hence such an $X$ is smooth.

\bigskip
\noindent
Vincenzo Antonelli,\\
Dipartimento di Scienze Matematiche, Politecnico di Torino,\\
c.so Duca degli Abruzzi 24,\\
10129 Torino, Italy\\
e-mail: {\tt vincenzo.antonelli@polito.it}

\bigskip
\noindent
Gianfranco Casnati,\\
Dipartimento di Scienze Matematiche, Politecnico di Torino,\\
c.so Duca degli Abruzzi 24,\\
10129 Torino, Italy\\
e-mail: {\tt gianfranco.casnati@polito.it}

\end{document}